\DeclareMathOperator{\alt}{\mbox{Alt}}
\DeclareMathOperator{\id}{id}
\title{Equivariant Algebraic index theorem}
\author{Alexander Gorokhovsky, Niek de Kleijn, Ryszard Nest}
\thanks{Alexander Gorokhovsky was partially supported by an NSF grant. Niek de Kleijn and Ryszard Nest were supported by the Danish National Research Foundation through the Centre for Symmetry and Deformation (DNRF92). Niek de Kleijn was also partially supported by the IAP ``Dygest" of the Belgian Science Policy.}
\begin{document}
 \begin{abstract}
We prove a $\Gamma$-equivariant version of the algebraic index theorem, where $\Gamma$ is a discrete group of automorphisms of a formal deformation  of a symplectic manifold. The  particular cases of this result are  the algebraic version of the transversal index theorem related to the theorem of A. Connes and H. Moscovici for hypoelliptic operators  and the index theorem for the extension of the algebra of pseudodifferential operators by a group of diffeomorphisms of the underlying manifold due to A. Savin, B. Sternin, E. Schrohe and D. Perrot.
 \end{abstract}
\maketitle
\tableofcontents
\section{Introduction}
The term \emph{index theorems} is usually used to describe the equality of, on one hand, analytic invariants of  certain operators on smooth manifolds and, on the other hand, topological/geometric invariants associated to their ``symbols". A convenient way of thinking  about this kind of results is as follows.

\medskip

\noindent One starts with  a $C^*$-algebra of operators  $A$ associated to some geometric situation  and  a $K$-homology cycle  $(A,\pi,H,D)$, where $\pi\colon A\rightarrow B(H)$ is a $*$-representation of $A$ on a Hilbert space $H$ and $D$ is a Fredholm operator on $H$ commuting with the image of $\pi$ modulo compact operators $\mathcal{K}$.   The explicit  choice of the operator $D$ typically has some geometric/analytic flavour, and, depending on the parity of the $K$-homology class, $H$ can have a $\Z/2\Z$ grading such that $\pi$ is  even and $D$ is odd. Given such a (say even) cycle,  an index  of a reduction of $D$ by an idempotent in $A\otimes\mathcal{K}$ defines a pairing of $K$-homology and $K$-theory, i.~e. the group homomorphism
\begin{equation}
\label{eq:pairing}
K_0(\C,A)\times K_0(A,\C)\longrightarrow \Z.
\end{equation}
One can think of this as a Chern character of $D$,
$$
ch(D)\colon K_*(A)\longrightarrow \Z,
$$
and the goal is to compute it explicitly in terms of some topological data extracted from the construction of $D$.
\medskip

\noindent Some examples are as follows.

\medskip

\noindent {\it $A=C(X)$, where $X$ is a compact manifold  and $D$ is an elliptic pseudodifferential  operator acting between spaces of smooth sections of a pair of  vector bundles  on $X$}.

\medskip

\noindent The number $<ch(D),[1]>$ is the Fredholm index of D, i.~e.  the integer
$$
\Ind (D)=\dim(\Ker (D))-\dim(\Coker (D))
$$
and the Atiyah--Singer  index theorem identifies it with the evaluation of the $\hat{A}$-genus of $T^*X$ on the Chern character of the principal symbol of $D$. This is the situation analysed in the original papers of Atiyah and Singer, see \cite{A-S}.

\medskip

\noindent {\it  $A=C^*(\mathcal{F})$, where $\mathcal{F}$ is a foliation of a smooth manifold and $D$  is a transversally elliptic operator on $X$.}

\medskip

\noindent  Suppose that a $K_0(A)$ class is represented by a projection $p\in \mathcal{A}$, where $\mathcal{A}$  is a subalgebra of $A$ closed under holomorphic functional calculus, so that the inclusion $\mathcal{A}\subset A$ induces an isomorphism on K-theory. For appropriately chosen $\mathcal{A}$, the fact that $D$ is transversally elliptic implies that the operator $pDp$ is Fredholm on the range of $p$ and the index theorem identifies the integer $\Ind (pDp)$ with a pairing of a certain cyclic cocycle on $\mathcal{A}$ with the Chern character of $p$ in the cyclic periodic complex of $\mathcal{A}$. For a special class of hypo-elliptic operators see f. ~ex. \cite{C-M}

\medskip

\noindent {\it  Suppose again that $X$ is a smooth manifold. The natural class of representatives of $K$-homology classes of $C(X)$ given by operators of the form $
D=\sum_{\gamma\in \Gamma} P_\gamma \pi(\gamma)
$, where  $\Gamma$ is a discrete group acting on $L^2 (X)$ by Fourier integral operators of order zero and $P_\gamma$ is a collection of pseudodifferential operators on $X$, all of them  of the same (non-negative) order.}

\medskip

\noindent  The principal symbol $\sigma_\Gamma (D)$ of such a $D$ is an element of  the $C^*$-algebra $C(S^*X)\rtimes_{max}\Gamma$, where $S^*M$ is the cosphere bundle of $M$. Invertibility of $\sigma_\Gamma (D)$ implies that $D$ is Fredholm and the index theorem in this case would express  $\Ind_{\Gamma} (D)$ in terms of some equivariant cohomology classes of $M$ and an appropriate equivariant Chern character of  $\sigma_\Gamma (D)$. For the case when $\Gamma$ acts by diffeomorphisms of $M$, see \cite{Sternin, Perrot}.

\medskip

\noindent The typical computation proceeds via a reduction of the  class of operators $D$ under consideration to  an algebra of (complete) symbols, which can be thought of as a "formal deformation"  $\mathcal{A}^\hbar$. Let us spend a few lines on a sketch of the construction of $\mathcal{A}^\hbar$ in the case when the operators in question come from a finite linear combination of diffeomorphisms of a compact manifold  $X$ with coefficients in the algebra $\mathcal{D}_X$ of differential operators on $X$. A special case is of course that of an elliptic differential operator on $X$.

\begin{example}
Let $\Gamma$ be a subgroup of the group of diffeomorphisms of $X$ viewed as a discrete group. $\Gamma$ acts naturally on $\mathcal{D}_X$. Let $\mathcal{D}_X^\sbullet$ be the filtration by degree of $\mathcal{D}_X$. Then the associated Rees algebra
$$
R=\{(a_0,a_1,\ldots )\mid a_k\in \mathcal{D}_X^k \}
$$
with the product
$$
(a_0,a_1,\ldots )(b_0,b_1,\ldots )=(a_0b_0,a_0b_1+a_1b_0,\ldots ,\sum_{i+j=k}a_ib_j,\ldots )
$$
has the induced action of $\Gamma$. The shift
$$
\hbar\colon (a_0,a_1,\ldots )\rightarrow (0, a_0,a_1,\ldots )
$$
makes $R$ into an $\C\hhbar$-module and $R/\hbar R$ is naturally isomorphic to $\prod_k Pol_k(T^*X)$ where $Pol_k(T^*X)$ is the space of  smooth, fiberwise polynomial functions of degree $k$ on the cotangent bundle $T^*X$.  A choice of an isomorphism of $R$ with $\prod_k Pol_k(T^*X)\hhbar$  induces on  $\prod_k Pol_k(T^*X)\hhbar $ an associative, $\hbar$-bilinear product $\star$, easily seen to extend to $C^\infty (T^*X)$. Since  $\Gamma$ acts  by automorphisms on $R$, it also acts on $(C^\infty (T^*X)\hhbar,\star )$.

\end{example}
This is usually formalized in the following definition.

\begin{definition}

 A formal deformation quantization of a symplectic manifold $(M,\omega )$ is an associative $\mathbb{C}\hhbar$-linear  product $\star$ on
$C^\infty(M)\hhbar$ of the form
\[
f\star g=fg+\frac{i\hbar}{2}\{f,g\} +\sum_{k\geq 2} \hbar^k P_k(f,g);
\]
where $\{f,g\}:=\omega(I_\omega(df),I_\omega(dg))$  is the canonical Poisson bracket  induced by the symplectic structure, $I_\omega$ is the isomorphism of $T^*M$ and $TM$ induced by $\omega$, and the $P_k$ denote bidifferential operators. We will also require that $f\star 1=1\star f=f$ for all $f\in C^\infty(M)\hhbar$. We will use $\AH(M)$ to denote the algebra $(C^\infty(M)\hhbar,\star )$. The ideal $\AHc(M)$ in $\AH(M)$, consisting of power series of the form
$\sum_k\hbar^k f_k$, where $f_k$ are compactly supported, has a unique (up to a normalization)
trace $Tr$ with values in ${\mathbb C}[\hbar^{-1}, \hbar\rrbracket$ (see f. ex. \cite{Fb}).

\end{definition}


\medskip

\noindent It is not difficult to see that the index computations (as in \ref{eq:pairing}) reduce to the computation of the pairing of the trace (or some other cyclic cocycle) with the $K$-theory of the symbol algebra, which, in the example above, is identified with a crossed product $\AHc(M)\rtimes \Gamma$. An example of this reduction is given in  \cite{N-T2}.

\medskip

\noindent {\it Since the product in $\AHc(M)$ is local, the computation of the pairing of $K$-theory and cyclic cohomology of $\AHc(M)$ reduces to a differential-geometric problem and the result  is usually called the ``algebraic index theorem}".

 \begin{remark}
 Since cyclic periodic homology is invariant under (pro)nilpotent extensions, the result of the pairing depends only on the $\hbar =0$ part of the $K$-theory of $\AHc(M)\rtimes \Gamma$. In our example, the $\hbar=0$ part of the symbol algebra $\AHc(M)\rtimes \Gamma$ is just $C^\infty_c (M)\rtimes \Gamma$, hence the Chern character of $D$ enters into the final result only through a class in the equivariant cohomology $H_\Gamma^*(M)$.
 \end{remark}

\subsection{The main result.} Suppose that $\Gamma$ is a  discrete group acting by continuous automorphisms on a formal deformation $\AH(M)$ of a symplectic manifold $M$. Let $\AG$ denote the algebraic crossed product associated to the given action of $\Gamma$.
For a non-homogeneous group cocycle $\xi\in C^k(\Gamma, \C)$, the formula below defines a cyclic $k$-cocycle $Tr_\xi$ on $\AHc(M)\rtimes\Gamma$.
\begin{equation}
Tr_\xi(a_0\gamma_0\otimes\ldots\otimes a_{k}\gamma_{k})=\delta_{e,\gamma_0\gamma_1\ldots\gamma_{k}}\xi(\gamma_1,\ldots,\gamma_k)Tr(a_0\gamma_0(a_1)\ldots (\gamma_0\gamma_1\ldots\gamma_{k-1})(a_k)).
\end{equation}

The action of $\Gamma$ on $\AH(M)$ induces (modulo $\hbar$) an
action of $\Gamma$ on $M$ by symplectomorphisms.
Let $\sigma$ be the ``principal symbol" map:
\[
\AH(M) \rightarrow \AH(M)/\hbar \AH(M)\simeq
C^{\infty}(M).
\]
It induces a homomorphism
\begin{equation*}
\sigma \colon \AH(M)\rtimes \Gamma \longrightarrow
C^{\infty}(M)\rtimes \Gamma,
\end{equation*}
still denoted by $\sigma$. Let
\begin{equation*} \Phi\colon H^\sbullet_\Gamma(M) \longrightarrow HC_{per}^\sbullet\left(C_c^{\infty}(M) \rtimes \Gamma\right)
\end{equation*}
be the canonical map (first constructed by Connes) induced by \eqref{eq:Phi}, where
 $H^\sbullet_\Gamma(M)$ denotes the cohomology of the Borel construction $M\times_\Gamma E\Gamma$ and $C_c^\infty(M)$
denotes the algebra of compactly supported smooth functions on $M$.

The main result of this paper is the following.
\begin{theorem}\label{Mainresult}
  Let $e$, $f \in M_N\left(\AG\right) $ be a couple of idempotents such that the difference $e-f \in M_N\left(\AHc(M) \rtimes \Gamma\right)$ is compactly supported, here $\AHc(M)$ denotes the ideal of compactly supported elements of $\AH(M)$. Let $[\xi]\in \CH^k(\Gamma, \C)$ be a group cohomology class.
Then $[e]-[f]$ is an element of $K_0(\AHc(M)\rtimes \Gamma)$ and its pairing with the cyclic cocycle $Tr_\xi$ is given by
\begin{equation}
<Tr_\xi,[e]-[f] >= \left\langle \Phi \left(\hat{A}_\Gamma e^{\theta_\Gamma}[\xi] \right), [\sigma(e)]-[\sigma(f)]\right\rangle.
\end{equation}
Here $\hat{A}_\Gamma \in H^\sbullet_\Gamma(M)$ is the
equivariant $\hat{A}$-genus of $M$ (defined in section \ref{6}), $\theta_\Gamma \in H^\sbullet_\Gamma(M) $ is the equivariant characteristic class of the
deformation $\AH(M)$  (also defined in section \ref{6}).
\end{theorem}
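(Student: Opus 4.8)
The plan is to strip the idempotents out of the statement, reduce to an equality of periodic cyclic cohomology classes, and then prove that equality by a Fedosov-type construction carried out over a simplicial model of the Borel construction. First one checks that the pairing is well posed: since $e-f\in M_N(\AHc(M)\rtimes\Gamma)$, excision produces a class $[e]-[f]\in K_0(\AHc(M)\rtimes\Gamma)$, and $Tr_\xi$ is a cyclic cocycle on this (purely algebraic) crossed product, so the pairing takes values in $\C\hhhbar$. Next, the principal symbol map $\sigma\colon\AHc(M)\rtimes\Gamma\to C_c^\infty(M)\rtimes\Gamma$ has kernel $\hbar\AHc(M)\rtimes\Gamma$, which is pro-nilpotent for the $\hbar$-adic filtration; by invariance of periodic cyclic (co)homology under pro-nilpotent extensions (cf.\ the Remark above), $\sigma$ induces an isomorphism $\sigma^{*}$ on $HC^{\sbullet}_{per}$. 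Since $\sigma_{*}([e]-[f])=[\sigma(e)]-[\sigma(f)]$ and both $\Phi$ and the $K$-theory pairing are natural, the theorem becomes equivalent to the $e,f$-free identity
\[
[Tr_\xi]=\sigma^{*}\,\Phi\!\left(\hat{A}_\Gamma\, e^{\theta_\Gamma}\,[\xi]\right)\qquad\text{in}\quad HC^{\sbullet}_{per}\!\left(\AHc(M)\rtimes\Gamma\right).
\]

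To represent both sides I would pass to formal geometry. A symplectic connection on $M$ together with Fedosov's construction produces a flat connection $D$ on the bundle $\W$ of formal Weyl algebras over $M$ with $\AH(M)$ its algebra of flat sections, and the canonical trace $Tr$ then admits a local description as integration of a trace density built from $D$. Since $\Gamma$ acts by automorphisms but in general preserves no connection, I would organise this data over the standard simplicial model of $M\times_\Gamma E\Gamma$: over the simplex labelled by $(\gamma_1,\dots,\gamma_p)$ one places the connection pulled back along the relevant symplectomorphism, together with transition data, valued in inner automorphisms of $\W$, implementing the $\Gamma$-action. These objects live in the de Rham--group bicomplex computing $H^{\sbullet}_\Gamma(M)$; the defects measuring non-invariance of the connection are exactly what generates the positive group-degree components of $\hat{A}_\Gamma$ and $\theta_\Gamma$, while Connes' map $\Phi$ is realised by an explicit chain map from this bicomplex into the cyclic complex of $C_c^\infty(M)\rtimes\Gamma$. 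Feeding the simplicial data and the group cocycle $\xi$ into the trace density yields an explicit cocycle representing $[Tr_\xi]$, and it remains to exhibit it as cohomologous to $\sigma^{*}\Phi(\hat{A}_\Gamma e^{\theta_\Gamma}[\xi])$.

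That last step is a computation in a universal model, and I expect it to be the main obstacle. Simplexwise the comparison reduces to a statement over a single formal coordinate chart, where the classical (non-equivariant) algebraic index theorem of Fedosov and Nest--Tsygan \cite{Fb,N-T2} supplies the group-degree-zero identity, while the group directions contribute only formal factors controlled by $\xi$ and the transition cocycles. The real difficulty is the equivariant bookkeeping: because there is no $\Gamma$-invariant Fedosov connection, the whole argument must be run in the simplicial/bicomplex model, and one must verify (i) that the non-invariance defects assemble into the \emph{genuine} equivariant classes $\hat{A}_\Gamma$, $\theta_\Gamma$ of Section~\ref{6}, not some non-canonical perturbation of them, and (ii) that the trace-density cocycle and the $\Phi$-image agree in the total complex. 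Both amount to a single identity in the Gelfand--Fuks-type cohomology governing the characteristic classes of the deformation, taken $\Gamma$-equivariantly; one establishes it in group-degree zero via the classical theorem and propagates it to all degrees by a spectral-sequence (equivalently, explicit transgression) argument.
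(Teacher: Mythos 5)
Your overall architecture---reduce to an $e,f$-free identity of periodic cyclic cocycles and represent both sides by Fedosov data organised over the simplicial model of $M\times_\Gamma E\Gamma$---matches the paper's (Sections \ref{3}--\ref{6}), and note that the reduction only needs naturality of the Chern--Connes pairing under $\sigma$, not your claim that $\sigma^{*}$ is an isomorphism on $HC^{\sbullet}_{per}$ (which for these $\hbar$-adically completed crossed products is delicate and, happily, unnecessary). The genuine gap is in your last step. You propose to verify the identity in group degree zero via the classical algebraic index theorem and then ``propagate it to all degrees by a spectral-sequence (equivalently, explicit transgression) argument,'' but no such propagation is available for free: the two cocycles to be compared live in the total complex computing $HC^{\sbullet}_{per}(\AHc(M)\rtimes\Gamma)$ through $C_\sbullet(\Gamma;CC^{per}_\sbullet(\AHc(M)))$, and agreement of their group-degree-zero components says nothing about the components in positive group degree; a priori the difference could be a nontrivial class supported entirely there, and you give no injectivity or vanishing statement that would rule this out. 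In other words, the ``main obstacle'' you flag is exactly the content of the theorem, and the proposal contains no mechanism for it.

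The paper's mechanism is the missing idea: the index identity is proved once, universally, in relative Lie algebra cohomology --- Theorem \ref{UAIT}, $\hat{\tau}_a=\sum_p\left[\hat{A}_fe^{\hat{\theta}}\right]_{2p}u^p\hat{\tau}_t$ in $\mathbb{H}^0_{Lie}(\g,\lsp(2d);\mathbb{L}^\sbullet)$ --- and then transported by a single morphism of complexes $I_\xi$ (Definition \ref{C}, Proposition \ref{rly}), built from the equivariant Gelfand--Fuks map $GF_\Gamma$, the quasi-isomorphism of Theorem \ref{thm:A13} identifying the homogeneous summand of $CC^{per}_\sbullet(\AH\rtimes\Gamma)$ with $C_\sbullet(\Gamma;CC^{per}_\sbullet(\AH))$, the cap product with $\xi$, and the integration pairing of Definition \ref{pairing}. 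Because $I_\xi$ is a chain map, no degree-by-degree propagation is needed: one only computes $I_\xi(\hat{\tau}_a)=Tr_\xi$ (Proposition \ref{prop:trx}, a simplicial-degree-zero computation reducing to the normalisation of the canonical trace) and identifies the class of $I_\xi\left(\sum_p(\hat{A}_fe^{\hat{\theta}})_{2p}u^p\hat{\tau}_t\right)$ with $\Phi\left(\mathcal{R}(\hat{A}(M)_\Gamma e^{\theta_\Gamma})[\xi]\right)$, which is essentially the definition of $\hat{A}_\Gamma$ and $\theta_\Gamma$ as $GF_\Gamma$-images of the universal classes --- so your verification (i) dissolves once one works universally. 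To repair your outline, replace the degree-zero-plus-transgression step by this universal formulation: quote the Lie-algebraic index theorem and check that your simplicial trace-density construction is the specialisation of a chain map defined on all of $C^\sbullet_{Lie}(\g,\lsp(2d);\mathbb{L}^\sbullet)$.
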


In the case when the action of $\Gamma$ is free and proper, we recover the algebraic version of Connes-Moscovici higher index theorem.

The above theorem gives an algebraic version of the results of \cite{Sternin}, without the requirement that $\Gamma$ acts by isometries. To recover the analytic version of the index theorem type results from \cite{Sternin}  and\cite{Perrot} one can apply the methods  of \cite{N-T2}.
\subsection{Structure of the article}$\mbox{}$

\medskip

\noindent {\bf Section {\ref{section2}}} contains preliminary material, extracted mainly from \cite{RRII} and \cite{Globalana}. It is included for the convenience of the reader and contains the following material.

\medskip

\noindent $\bullet$ {\it Deformation quantization of symplectic manifolds and Gelfand--Fuks construction.}

\medskip

Following Fedosov, a deformation quantization of a symplectic manifold $\AH(M)$ can be seen as the space of flat sections of a flat connection $\nabla_F$ on the bundle $\mathcal{W}$ of Weyl algebras over $M$ constructed from the bundle of symplectic vector spaces $T^*M\rightarrow M$. The fiber of $\mathcal{W}$ is isomorphic to the Weyl algebra $\g=\mathbb{W}$ (see definition \ref{defWW}) and $\nabla_F$ is a connection with values in the Lie algebra of derivations of $\WW$, equivariant with respect to a maximal compact subgroup $K$ of the structure group of $T^*M$.

Suppose that $\mathbb{L}$ is a $(\g,K)$-module. The Gelfand--Fuks construction provides a complex $(\Omega (M,\mathbb{L}), \nabla_F )$ of $\mathbb{L}$-valued differential forms with a differential $\nabla_F$ satisfying $\nabla_F^2=0$. Let us denote  the corresponding spaces of cohomology classes by $H^*(M,\mathbb{L})$. An example is the Fedosov construction itself, in fact
$$
H^k(M,\WW)=
\begin{cases} \AH(M)&k=0\\0&k\neq 0.
\end{cases}
$$
The Gelfand--Fuks construction also provides a
 morphism of complexes
$$
GF\colon C^*_{Lie}(\g,K;\mathbb{L}))\longrightarrow \Omega (M,\mathcal{L})
$$

\medskip

\noindent{$\bullet$ \it  Algebraic index theorem}

\medskip

The Gelfand--Fuks map  is used to reduce the algebraic index theorem for a deformation of $M$ to its Lie algebra version involving only the $(\g,K)$-modules given by the periodic cyclic complexes of $\WW$ and the commutative algebra $\mathbb{O}=\C\llbracket  x_1,\ldots,x_n,\xi_1,\ldots,\xi_n,\hbar\rrbracket$.  In fact, the following holds.

\begin{theorem}
Let
$
 \mathbb{L}^\sbullet=\Hom^{-\sbullet}(CC^{per}_\sbullet(\WW),\hat{\Omega}^{-\sbullet}[\hbar^{-1},\hbar\rrbracket[u^{-1},u\rrbracket[2d]).
 $  There exist two elements
$\hat{\tau}_a$ and  $\hat{\tau}_t$ in the hypercohomology group $\mathbb{H}^0_{Lie}(\g,K;\mathbb{L}^\sbullet )$ such that the following holds.
 \begin{enumerate}
 \item Suppose that  $M=T^*X$ for a smooth compact manifold $X$ and $\AH(M)$ is the deformation coming from the calculus of differential operators.  Then  whenever $p$ and $q$ are two idempotent pseudodifferential operators  with $p-q$ smoothing,
  \begin{footnotesize}
 $$
 \int_M GF(\hat{\tau}_a)(\sigma(p)-\sigma(q))=Tr(p-q) \mbox{ and }
 \int_M GF(\hat{\tau}_t)(\sigma(p)-\sigma(q))=\int_M ch(p_0)-ch(q_0)
 $$
  \end{footnotesize}
 where $Tr$ is the standard trace on the trace class operators on $L^2(M)$, $p_0$ (resp. $q_0$) are the $\hbar=0$ components of $p$ (resp. $q$) and $ch\colon K^0(M)\rightarrow H^{ev}(M)$ is the classical Chern character.
\item
\begin{equation*}
\hat{\tau}_a = \sum_{p\geq 0}\left[\hat{A}_fe^{\hat{\theta}}\right]_{2p}u^p\hat{\tau}_t,
\end{equation*}
where $\left[\hat{A}_fe^{\hat{\theta}}\right]_{2p}$ is the component of degree $2p$ of a certain hypercohomology class. In the case of $M=T^*X$ as above, $GF\left(\hat{A}_fe^{\hat{\theta}}\right)$ coincides with the $\hat{A}$-genus of $M$.
\end{enumerate}
\end{theorem}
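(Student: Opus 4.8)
The plan is to reduce the statement about $\hat\tau_a$ and $\hat\tau_t$ to a purely Lie-algebraic computation in the $(\g,K)$-complex and then transport it to $M$ via the Gelfand--Fuks map $GF$. First I would construct the two classes themselves. The analytic class $\hat\tau_a$ should be built from the canonical trace functional on the Weyl algebra $\WW$: the trace density $\WW\to\hat\Omega^{2d}[\hbar^{-1},\hbar\rrbracket$ is $(\g,K)$-equivariant, and extending it to the periodic cyclic complex $CC^{per}_\sbullet(\WW)$ by the standard cyclic-to-cohomology formulas produces a cocycle in $\Hom^{-\sbullet}(CC^{per}_\sbullet(\WW),\hat\Omega^{-\sbullet}[\hbar^{-1},\hbar\rrbracket[u^{-1},u\rrbracket[2d])$ whose class lies in $\mathbb{H}^0_{Lie}(\g,K;\LL^\sbullet)$; this is essentially Feigin--Felder--Shoikhet. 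The topological class $\hat\tau_t$ is the analogous object built from the trace on the undeformed algebra $\mathbb{O}$ (i.e. the fiberwise-polynomial symbols), pulled into $CC^{per}_\sbullet(\WW)$ through the principal-symbol quasi-isomorphism $CC^{per}_\sbullet(\WW)\to CC^{per}_\sbullet(\mathbb{O})$, which exists because $\WW$ is a flat deformation of $\mathbb{O}$.

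Second, for part (1), I would invoke the Gelfand--Fuks compatibility already recorded in the excerpt: $H^0(M,\WW)=\AH(M)$, so $GF$ turns a flat section of the Weyl bundle into its globally defined symbol, and $GF$ of the trace density becomes integration over $M$ against the Fedosov trace. The identity $\int_M GF(\hat\tau_a)(\sigma(p)-\sigma(q))=Tr(p-q)$ is then the statement that the abstract cyclic cocycle $GF(\hat\tau_a)$ represents the canonical trace $Tr$ on $\AHc(M)\simeq$ (complete symbols) in periodic cyclic cohomology; by uniqueness of the trace on the compactly supported Fedosov deformation (cited from \cite{Fb}), it suffices to check the normalization on a single idempotent, e.g. a rank-one projector localized in a Darboux chart, where both sides reduce to an explicit Gaussian integral. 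For $GF(\hat\tau_t)$, the same argument but with the undeformed trace identifies the cocycle with the functional $\sigma(p)-\sigma(q)\mapsto\int_M ch(p_0)-ch(q_0)$, since the degree-zero part of the periodic cyclic class of a symbol idempotent is exactly its topological Chern character and $GF$ sends it to the de Rham Chern character on $M=T^*X$.

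Third, for part (2), the comparison $\hat\tau_a=\sum_{p\ge 0}[\hat A_f e^{\hat\theta}]_{2p}u^p\,\hat\tau_t$ is the heart of the matter and is proved entirely inside $\mathbb{H}^*_{Lie}(\g,K;-)$. Both $\hat\tau_a$ and $\hat\tau_t$ are nonzero classes in a one-dimensional (over $[\hbar^{-1},\hbar\rrbracket((u))$) hypercohomology group — this one-dimensionality following from the rigidity of the Weyl algebra's Hochschild/cyclic cohomology together with the Koszul-type computation for $\g=\lsp\ltimes\WW$ — so they must be proportional, and the proportionality factor is a universal characteristic class. To identify that factor with $\hat A_f e^{\hat\theta}$ I would run Fedosov's argument: deform the trace by a family of connections interpolating between the symplectic and the Weyl curvature, track how the trace density changes, and recognize the resulting transgression as the $\hat A$-genus of the symplectic bundle twisted by the curvature class $\hat\theta$ of the deformation. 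Finally, the claim that $GF([\hat A_f e^{\hat\theta}])$ is the classical $\hat A$-genus when $M=T^*X$ follows from the standard identification of $GF$ with Chern--Weil theory applied to a torsion-free symplectic connection compatible with the Lagrangian foliation of $T^*X$.

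I expect the main obstacle to be step three: proving that the relevant $(\g,K)$-hypercohomology group is exactly one-dimensional over the Laurent field, so that proportionality of $\hat\tau_a$ and $\hat\tau_t$ is forced, and then pinning the constant of proportionality to $\hat A_f e^{\hat\theta}$ rather than merely to \emph{some} invertible power series. The dimension count requires a careful spectral-sequence argument — presumably the Hochschild--Kostant--Rosenberg/Brylinski filtration on $CC^{per}_\sbullet(\WW)$ degenerating against the Lie algebra cohomology of $\lsp$ — and the constant is determined by yet another localization to a model case (a single torus action, or the harmonic oscillator), where the left-hand side can be evaluated by a fixed-point formula and matched against the Taylor coefficients of $\hat A$. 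Everything else — constructing the cocycles, checking $K$-equivariance, verifying that $GF$ intertwines the abstract and geometric traces — is routine once the framework of Section \ref{section2} is in place.
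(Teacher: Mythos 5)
Your constructions of the two classes and your treatment of part (1) are essentially in line with the paper: $\hat{\tau}_t$ is indeed the class of $J\circ HKR\circ\sigma$ and $\hat{\tau}_a$ comes from extending the fundamental cyclic cocycle $\mu^\hbar\circ\iota$ of the Weyl algebra to a cocycle in the total complex, and the trace identification is proved exactly by your route (a degree-zero cocycle is a trace, uniqueness of the trace on $\AHc$, then a local normalization in a Darboux chart); note only that the paper's normalization check is done cohomologically, by showing $J^\infty_F(f)$ is cohomologous to $\frac{1}{(i\hbar)^d d!}f\,\varphi\,\omega^d$, and that the further identification with the $L^2$ operator trace for $M=T^*X$ is an extra ``formal versus analytic'' step that the paper delegates to the literature rather than proves.

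The genuine gap is in your step three. The claim that $\mathbb{H}^0_{Lie}(\g,\lsp(2d);\LL^\sbullet)$ is one-dimensional over $\C[\hbar^{-1},\hbar\rrbracket[u^{-1},u\rrbracket$, so that $\hat{\tau}_a$ and $\hat{\tau}_t$ are forced to be proportional with a scalar Laurent-series factor, is false --- and the statement of the theorem itself shows it: the discrepancy $\sum_{p\geq 0}[\hat{A}_fe^{\hat{\theta}}]_{2p}u^p$ is not a scalar but a nontrivial cohomology class with components in all even degrees, entering through powers of $u$. Concretely, the total-degree-zero hypercohomology receives contributions from $H^{2p}_{Lie}(\g,\lsp(2d);\C\hhbar)\otimes H^{-2p}(\LL^\sbullet)$ for every $p\geq 0$, and the relative Lie algebra cohomology $H^{ev}_{Lie}(\g,\lsp(2d);\C\hhbar)$ is the (truncated) ring of characteristic classes containing $\hat{\theta}$ and the Chern--Weil image of $S^\sbullet(\lsp(2d)^*)^{\Sps(2d)}$ (in particular $\hat{A}_f$), so the group is a module of rank one over this ring at best, not a line over the Laurent coefficients. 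Consequently no dimension count can force proportionality, and the identification of the comparison class as $\hat{A}_fe^{\hat{\theta}}$ cannot be reduced to ``pinning a constant'' by a single model computation or a fixed-point evaluation (no such localization is available in this formal setting). This is precisely why the paper does not argue this way: it defines both cocycles explicitly and cites \cite{RRII} for Theorem \ref{UAIT}, where the equality is established by an actual comparison of the two cocycles through the quasi-isomorphisms of Theorem \ref{th:classes} and a Chern--Weil-type computation, not by a rigidity argument. To repair your proposal you would either have to reproduce that computation or prove the much more delicate statement that the hypercohomology is free of rank one over the characteristic-class ring and then still identify the specific invertible element, which is the whole content of the theorem.
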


\medskip

\noindent {In general, given a group $\Gamma$ acting on a deformation quantization algebra $\AH(M)$, there does not exist any invariant Fedosov connection. As a result, the Gelfand--Fuks map described in section \ref{section2} does not extend to this case. The rest of the paper is devoted to the construction of a Gelfand--Fuks map that avoids this problem and the proof of the main theorem.}

\medskip

   {\bf Section \ref{3}} is devoted to a generalization of the Gelfand--Fuks construction to the equivariant case, where an analogue of the Fedosov construction and Gelfand--Fuks map are constructed on $M\times E\Gamma$.

   \medskip

    {\bf Section \ref{4}} is devoted to a  construction of a pairing of the periodic cyclic homology of the crossed product algebra with a certain Lie algebra cohomology appearing in Section \ref{section2}. The main tool is for this construction is  the Gelfand--Fuks maps from Section \ref{3}.

    \medskip

    {\bf Section \ref{6}} contains the proof of the main result.

    \medskip

    The appendix is used to define and prove certain statements about the various cohomology theories appearing in the main body of the paper. All the results and definitions in the appendices are well-known and standard and are included for the convenience of the reader.

\section{Algebraic Index Theorem}\label{section2}

\subsection{Deformed formal geometry}
Let us start in this section by recalling the adaptation of the framework of Gelfand-Kazhdan's formal geometry to deformation quantization described in
\cite{AIT, N-T99} and \cite{RRII}.

\medskip

\noindent {\it For the rest of this section we fix   a symplectic manifold $\left(M,\omega\right)$ of dimension $2d$ and   its deformation quantization $\AH\left(M\right)$.}

\medskip

\begin{notation} Let $m\in M$.
\begin{enumerate}
\item $\mathcal{J}_m^\infty(M)$ denotes the space of $\infty$-jets at $m\in M$; $\mathcal{J}_m^\infty(M):= \varprojlim C^\infty(M)/\left(\mathcal{I}_m\right)^k$, where
$\mathcal{I}_m$ is the ideal of smooth functions vanishing at $m$ and $k \in \N$.
\item Since the product in the algebra
$\AH\left(M\right)$ is local, it defines an associative, $\C\hhbar$-bilinear product $\star_m$ on $\mathcal{J}_m^\infty(M)$.
$\widehat{\AH\left(M\right)_m}$ denotes the algebra $(\mathcal{J}_m^\infty(M)\hhbar,\star_m)$.
\end{enumerate}
\end{notation}
\begin{notation} \label{Moyal}$\mbox{}$
\begin{enumerate}
\item   $\mathbb{W}$ will denote the algebra $\widehat{\AH\left(\R^{2d}\right)}_0$, where the deformation $\AH\left(\R^{2d}\right)$ has the product  given by the Moyal-Weyl formula
\begin{equation*}
\left(f\star g\right)\left(\xi,x\right)=\exp\left.\left(\frac{i\hbar}{2}\sum_{i=1}^d(\partial_{\xi^i}\partial_{y^i}-\partial_{\eta^i}\partial_{x^i})\right)f
\left(\xi,x\right)g\left(\eta,y\right)\right|_{\substack{\xi^i=\eta^i\\x^i=y^i}}.
\end{equation*}
\item Let $\hat{x}^k$, $\hat{\xi^k}$ denote the jets of $x^k$, $\xi^k$ -- the standard Darboux coordinates on $\R^{2d}$ respectively. $\mathbb{W}$ has a graded algebra structure, where the degree of the $\hat{x}^k$'s and $\hat{\xi}^k$'s is $1$ and the degree of $\hbar$ is $2$.
\item $\mathbb{W}$ will be endowed with the $\left\langle \hbar, \hat{x}^1,\ldots, \hat{x}^n,\hat{\xi}^1,\ldots,\hat{\xi}^n\right\rangle$-adic topology.
\item We denote the (symbol) map given by $\sum\hbar^kf_k\mapsto f_0$ by
\begin{equation*}
\hat{\sigma}_m\colon \widehat{\AH(M)}_m\longrightarrow \mathcal{J}_m^\infty(M).
\end{equation*}
  We shall also use the notation
$$
\mathcal{J}_0^\infty(\R^{2d})=:\mathbb{O}.
$$
\end{enumerate}
\end{notation}
\begin{definition}\label{defWW}
For a real symplectic vector space $(V,\omega)$ we denote
\begin{equation*}\WW(V):=\frac{\widehat{\mathcal{T}(V)}\otimes_\R\C\hhbar}
{\langle v\otimes w-w\otimes v-i \hbar\omega(v,w)\rangle}. \end{equation*}
Here $\mathcal{T}(V)$ is the tensor algebra of $V$, $\widehat{\mathcal{T}(V)}$ is its $V$-adic
completion and the topology is given by the filtration by assigning elements of $V$  degree $1$ and $\hbar$ degree $2$.
	\end{definition}
The assignment $V \mapsto \WW(V)$ is clearly functorial with respect to symplectomorphisms.

\begin{remark}
	Suppose $(V,\omega)$ is a $2d$-dimensional real symplectic vector space. A choice of symplectic basis for $V$ induces an isomorphism of $\C\hhbar$ algebras:
	\begin{equation*}
	\WW(V^*)\simeq \mathbb{W}.
	\end{equation*}
\end{remark}
\begin{notation}
 Let $\tG:=\Aut(\mathbb{W})$ denote the group of continuous $\Ch$-linear automorphisms of $\mathbb{W}$.
 We let $\g=\Der(\mathbb{W})$ denote the Lie algebra of continuous $\Ch$-linear derivations of $\mathbb{W}$.
\end{notation}
For future reference, let us state the following observation
\begin{lemma} The map
$$
\mathbb{W}\ni f\rightarrow \frac{1}{\hbar}\ad f\in \g
$$
is surjective. In particular, the grading of $\mathbb{W}$ induces a grading $\g=\prod_{i\geq -1}\g_i$ on $\g$, namely the unique grading such that this map is of degree $-2$ (note that $\WW_0=\C$ is central) . We will use the notation
$$
\g_{\geq k}=\prod_{i\geq k}\g_i.
$$
\end{lemma}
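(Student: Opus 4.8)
The plan is to establish the three assertions in sequence, starting from the surjectivity statement and deriving the grading claim as a formal consequence. First I would recall that for the Weyl algebra $\mathbb{W}$, the inner derivations are exactly $\frac{1}{\hbar}\ad f$ for $f\in\mathbb{W}$, and that $\mathbb{W}_0=\C$ is central (indeed $\ad 1=0$, and more generally the kernel of $f\mapsto\frac1\hbar\ad f$ is the center, which for the Weyl algebra over $\C\hhbar$ is $\C\hhbar$). The real content is that \emph{every} continuous $\Ch$-linear derivation of $\mathbb{W}$ is inner. I would prove this by a standard filtration/induction argument: given $\delta\in\g$, its action on the generators $\hat{x}^k,\hat{\xi}^k$ determines it, and one shows inductively on the $\langle\hbar,\hat{x},\hat{\xi}\rangle$-adic filtration that $\delta$ agrees with some $\frac1\hbar\ad f$ modulo higher-order terms; completeness of $\mathbb{W}$ in this topology lets one sum the corrections into a genuine element $f\in\mathbb{W}$. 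Concretely, since $H^1$ of the Lie algebra cohomology of $\WW$ with coefficients in $\WW$ vanishes (the Weyl algebra has trivial Hochschild and hence Lie algebra cohomology in the relevant degrees), every derivation is inner; I would either cite this or give the elementary filtered computation.

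Next, for the grading statement: the map $\Psi\colon f\mapsto\frac1\hbar\ad f$ is surjective onto $\g$ by the first part, and $\mathbb{W}$ carries the grading in which $\hat{x}^k,\hat{\xi}^k$ have degree $1$ and $\hbar$ has degree $2$, so $\mathbb{W}=\prod_{i\geq 0}\mathbb{W}_i$ with $\mathbb{W}_0=\C$. I would define $\g_i:=\Psi(\mathbb{W}_{i+2})$, so that $\g=\sum_{i\geq -1}\g_i$ since $\mathbb{W}_0=\C=\ker\Psi\cap\mathbb{W}_0$ contributes nothing below degree $-1$ and $\mathbb{W}_1$ gives $\g_{-1}$. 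To see this is a genuine \emph{algebra} grading (i.e. $[\g_i,\g_j]\subseteq\g_{i+j}$) one checks that the commutator in $\mathbb{W}$ respects the grading and that $\Psi$ intertwines the commutator on $\mathbb{W}$ with the bracket on $\g$: $[\Psi(f),\Psi(g)]=\Psi(\frac1\hbar[f,g])$, and $f\star g-g\star f$ lowers $\hbar$-degree compensating correctly — indeed the Moyal bracket of a degree-$p$ and a degree-$q$ element lies in degree $p+q-2$, so $\frac1\hbar[\mathbb{W}_p,\mathbb{W}_q]\subseteq\mathbb{W}_{p+q-2}$, whence $[\g_{p-2},\g_{q-2}]\subseteq\g_{p+q-4}$ as required. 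One must also verify well-definedness: if $\Psi(f)$ has a given degree then the homogeneous components of $f$ outside the appropriate degree lie in $\ker\Psi=\C$, forcing $\g_i$ to be independent of the choice of lift; uniqueness of the grading with $\Psi$ of degree $-2$ follows because $\Psi$ is surjective and its kernel is concentrated in degree $0$.

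The main obstacle is the surjectivity (innerness of derivations) — this is the one assertion with genuine mathematical content rather than bookkeeping. The subtlety is the topological aspect: one is dealing with \emph{continuous} derivations of a \emph{completed} algebra, so the induction must be controlled with respect to the $\langle\hbar,\hat{x},\hat{\xi}\rangle$-adic topology and one must ensure the series of corrections converges in $\mathbb{W}$. I expect this is handled exactly as in the references the paper draws on (\cite{AIT, N-T99, RRII}), by the observation that $\g$ is a pronilpotent-by-$\lsp(2d)$ structure and a degree-by-degree solvability of the equation $\ad f=\hbar\delta$ on homogeneous components, using that $\ad\colon\mathbb{W}_{k}/\C\to\prod_i\g_i$ is an isomorphism onto its image in each degree. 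Everything else — the grading claim and its uniqueness — is then a short formal deduction, and I would keep that part brief, as the lemma is explicitly flagged as an observation for future reference rather than a substantial result.
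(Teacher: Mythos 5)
The paper itself gives no proof of this lemma: it is stated as a standard observation (the fact is classical, going back to Fedosov and the references \cite{AIT,RRII}), so there is no argument of the authors to compare yours against; what follows is an assessment of your proposal on its own terms. Your overall strategy --- filtered induction for the surjectivity, then formal bookkeeping for the grading --- is the standard one, and the grading part is essentially fine, including the key computation $\frac1\hbar[\WW_p,\WW_q]\subseteq\WW_{p+q-2}$ and the degree $-2$ statement.

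Two points in the surjectivity argument need repair. First, the cohomological shortcut is wrong as stated: over $\C\hhbar$ the first Hochschild cohomology of $\WW$ does \emph{not} vanish. The derivations $\frac1\hbar\ad f$ are precisely \emph{not} inner in the Hochschild sense unless $f\in\hbar\WW+\C\hhbar$, and in fact $\Der(\WW)/\Inn(\WW)\simeq \WW/(\hbar\WW+\C\hhbar)\simeq\mathbb{O}/\C\neq 0$. The vanishing you want holds only after inverting $\hbar$ (for $\WW[\hbar^{-1}]$), and even then one must control the pole order of the implementing element to conclude that a derivation preserving $\WW$ has the form $\frac1\hbar\ad f$ with $f\in\WW$; so this is not a citation-level substitute for the computation. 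Second, in the inductive argument the one step carrying real content is left unargued: the phrase ``$\ad\colon\WW_k/\C\to\prod_i\g_i$ is an isomorphism onto its image in each degree'' is vacuous (any injection is), whereas what must be shown is that the image exhausts each degree. Concretely, if $\delta-\frac1\hbar\ad f$ raises the grading by at least $N$, its leading degree-$N$ part is a $\C[\hbar]$-linear derivation of the associated graded \emph{commutative} algebra, i.e.\ a formal vector field with coefficients involving $\hbar$; compatibility with the star-commutator at the next order forces this vector field to preserve the Poisson bracket, and only then does the formal Poincar\'e lemma produce a Hamiltonian $h\in\WW_{N+2}$ with $\frac1\hbar\ad h$ matching the leading part, after which completeness lets you sum the corrections. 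Without the symplectic-hence-Hamiltonian step the induction does not close, since not every formal vector field is Hamiltonian. (A minor further slip: the kernel of $f\mapsto\frac1\hbar\ad f$ is the centre $\C\hhbar$, which is spread over all even degrees rather than concentrated in degree $0$; this does not damage your uniqueness claim, since the centre is a graded subspace.)
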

\begin{notation}
Let $\tg=\{ \frac{1}{\hbar}f\mid f\in \mathbb{W}\}$ with a Lie algebra structure given by
$$
[\frac{1}{\hbar}f,\frac{1}{\hbar}g]=\frac{1}{\hbar^2}[f,g]
$$
and note that $\tg$ is a central extension of $\g$. The corresponding short exact sequence has the form
\begin{equation}
\label{sesLieC}
0\longrightarrow\frac{1}{\hbar}\C\hhbar\longrightarrow \tg\stackrel{\ad}{\longrightarrow}\g \longrightarrow 0,
\end{equation}
where $\ad\frac{1}{\hbar}f(g)=\frac{1}{\hbar}[f,g]$.
\end{notation}

\noindent The extension \eqref{sesLieC} splits over $\lsp(2d,\C)$ and, moreover, the corresponding inclusion
$$
\lsp(2d,\C)\hookrightarrow \tg
$$
integrates to the action of $\Sp(2d,\C)$. The Lie subalgebra $\lsp(2d,\R)\subset \lsp(2d,\C)$ gets represented, in the standard basis, by elements of $\tg$ given by

$$
\frac{1}{\hbar}\hat{x}^k \hat{x}^j, \ \frac{-1}{\hbar}
\hat{\xi}^k \hat{\xi}^j \ \mbox{and} \ \frac{1}{\hbar}\hat{x}^k\hat{\xi}^j,\ \mbox{where} \ k,j=1,2,\ldots, d.
$$


\begin{lemma}
The Lie algebra $\g_{\geq0}$ has the structure of a semi-direct product
 \begin{equation*}
 \g_{\geq 0}= \g_{\geq 1}\rtimes\lsp(2d,\C).\end{equation*}
\end{lemma}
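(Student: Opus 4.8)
The plan is to exhibit the semi-direct product structure by first identifying $\lsp(2d,\C)$ as a Lie subalgebra of $\g_{\geq 0}$, then showing $\g_{\geq 1}$ is an ideal with the right complement. Recall that the map $\WW\ni f\mapsto \frac{1}{\hbar}\ad f\in\g$ is surjective and of degree $-2$, so a degree-$0$ derivation is $\frac{1}{\hbar}\ad f$ for $f$ of degree $2$ in $\WW$, i.e. a quadratic expression in $\hat x^k,\hat\xi^k$ (the $\hbar$-linear term being central, hence irrelevant). Under the symbol map these quadratic elements are exactly the quadratic polynomials on $\R^{2d}$, and $\frac{1}{\hbar}\ad$ of a quadratic Hamiltonian generates a linear symplectic vector field; this gives the isomorphism $\g_0\cong\lsp(2d,\C)$. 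Composing with the splitting $\lsp(2d,\C)\hookrightarrow\tg$ mentioned just above (which maps onto $\g_0$ after applying $\ad$), we obtain a Lie algebra embedding $\lsp(2d,\C)\hookrightarrow\g_{\geq 0}$ whose image is a complement to $\g_{\geq 1}$ as a graded vector space.

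Next I would verify that $\g_{\geq 1}=\prod_{i\geq 1}\g_i$ is a Lie ideal in $\g_{\geq 0}$. This is immediate from the grading: $[\g_i,\g_j]\subseteq\g_{i+j}$, so for $i\geq 0$ and $j\geq 1$ one has $i+j\geq 1$, hence $[\g_{\geq 0},\g_{\geq 1}]\subseteq\g_{\geq 1}$. Combined with the vector space decomposition $\g_{\geq 0}=\g_{\geq 1}\oplus\g_0$ and the fact that $\g_0\cong\lsp(2d,\C)$ is a subalgebra, this is precisely the assertion that $\g_{\geq 0}$ is the semi-direct product $\g_{\geq 1}\rtimes\lsp(2d,\C)$, the action of $\lsp(2d,\C)$ on the ideal $\g_{\geq 1}$ being the restriction of the bracket.

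The one point requiring a little care — and the place I expect to spend most of the argument — is checking that the bracket on $\g_0$ really does match the bracket on $\lsp(2d,\C)$ rather than differing by a twist. Concretely, one computes $[\frac{1}{\hbar}\ad f,\frac{1}{\hbar}\ad g]=\frac{1}{\hbar^2}\ad[f,g]$ for quadratic $f,g$, and notes that for quadratic elements $\frac{1}{\hbar^2}[f,g]$ is again ($\frac1\hbar$ times) a quadratic element plus a central term, so the correspondence $f\leftrightarrow\frac1\hbar\ad f$ intertwines the Weyl-algebra commutator bracket (which on quadratics reproduces the Poisson bracket of quadratic Hamiltonians, i.e. $\lsp(2d,\C)$) with the derivation bracket. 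Once this identification of $\g_0$ with $\lsp(2d,\C)$ is pinned down, and one records that the explicit generators $\frac1\hbar\hat x^k\hat x^j$, $-\frac1\hbar\hat\xi^k\hat\xi^j$, $\frac1\hbar\hat x^k\hat\xi^j$ listed above span exactly $\g_0$ and close under the bracket to give $\lsp(2d,\R)\subset\lsp(2d,\C)$, the proof is complete: the splitting is the obvious inclusion of $\g_0$ as the degree-zero piece, the complement $\g_{\geq 1}$ is an ideal by the grading, and no further input is needed.
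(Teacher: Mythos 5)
Your argument is correct: the paper states this lemma without proof, and the intended justification is exactly what you give — $\g_{\geq 1}$ is an ideal because $[\g_i,\g_j]\subseteq\g_{i+j}$, while the degree-zero part $\g_0$ is identified with the quadratic elements of $\WW$ modulo the central $\hbar\C$ term via $f\mapsto\frac{1}{\hbar}\ad f$, whose bracket reproduces the Poisson bracket of quadratic Hamiltonians and hence $\lsp(2d,\C)$, matching the splitting $\lsp(2d,\C)\hookrightarrow\tg$ recorded just before the lemma. The only cosmetic remark is that for quadratic $f,g$ the element $\frac{1}{\hbar}[f,g]$ is again purely quadratic (no central correction arises), so your ``plus a central term'' caveat is harmless but unnecessary.
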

The  group $\tG$ of automorphisms of $\mathbb{W}$ has a structure of a pro-finite dimensional Lie group with the pro-finite dimensional Lie algebra $\g_{\geq0}$. As such, $\tG$ has the structure of semi-direct product
\begin{equation*}
\tG\simeq \tG_1\rtimes \Sp(2d,\C),
\end{equation*}
 where $\tG_1=\exp \g_{\geq 1}$ is pro-unipotent and contractible. The filtration of $\g$ induces a filtration of $\tG$:
\begin{equation*}
 \tG=\tG_0\supset \tG_1\supset \tG_2\supset \tG_3\supset \ldots.
 \end{equation*}
Here, for $k\geq 1$, $\tG_k=\exp(\g_{\geq k})$.

Recall that ${\AH\left(M\right)}$ is a formal deformation of a symplectic manifold $(M,\omega)$. In particular, $\omega$ gives an isomorphism $TM\rightarrow T^*M$ and the cotangent bundle $T^*M$ will be given the induced structure of a symplectic vector bundle.
For all $m\in M$, there exists a non-canonical isomorphism
$$
\widehat{\AH\left(M\right)}_m\simeq\mathbb{W}.
$$
We collect them together in the following.
\begin{definition}
 \begin{equation*}\tM:=\left\{\phi_m\mid m\in M, \phi_m\colon \widehat{\AH\left(M\right)}_m\stackrel{\sim}{\longrightarrow}\mathbb{W} \right\}
 \end{equation*}
The natural action of $\tG$ on $\tM$ endows it  with the structure of $\tG$-principal bundle. We give $\tM$ the pro-finite dimensional manifold structure using the pro-nilpotent group structure of  $\tG/Sp(2d,\C)$, see \cite{AIT} for details.
\end{definition}

\begin{theorem}\cite{AIT} The tangent bundle of $\tM$ is isomorphic to the trivial bundle $\tM\times\g$ and there exists a trivialisation
given by a $\g$-valued  one-form $\omega_\hbar\in\Omega^1\left(\tM\right)\otimes\g$ satisfying the Maurer-Cartan equation
\begin{equation*}
d\omega_\hbar+\frac{1}{2}[\omega_\hbar,\omega_\hbar]=0.
\end{equation*}
\end{theorem}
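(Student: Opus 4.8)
The plan is to reduce the statement to the corresponding infinitesimal fact about the Harish-Chandra pair $(\g, \Sp(2d,\C))$, exactly as in the classical Gelfand--Kazhdan formal geometry, and then transport it to $\tM$ via the principal bundle structure. First I would observe that $\tM$ carries a tautological $\g$-valued $1$-form: given $\phi_m\in\tM$ and a tangent vector $v$ at $\phi_m$, one differentiates the family of isomorphisms $\phi\colon \widehat{\AH(M)}_\bullet \iso \WW$ along $v$; since $\phi_m$ is itself an isomorphism, $(d\phi)(v)\circ\phi_m^{-1}$ is a continuous $\C\hhbar$-linear map $\WW\to\WW$ which, after checking it is a derivation (this follows because each $\phi$ is an algebra map, so differentiating the multiplicativity identity $\phi(a\star_m b)=\phi(a)\star\phi(b)$ gives the Leibniz rule), lands in $\g=\Der(\WW)$. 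This defines $\omega_\hbar\in\Omega^1(\tM)\otimes\g$. The nondegeneracy — i.e.\ that $\omega_\hbar$ trivializes $T\tM\cong\tM\times\g$ — follows from the fact that the $\tG$-action on $\tM$ is free and transitive on fibers while the base direction of $T\tM$ is matched, via $\omega_\hbar$ restricted to a horizontal complement coming from any local connection, with $\g/\g_{\geq 0}\cong\g_{-1}$; combined with the vertical directions, which $\omega_\hbar$ identifies with $\g_{\geq 0}$ by the usual fundamental-vector-field computation, one gets an isomorphism onto all of $\g=\g_{-1}\oplus\g_{\geq 0}$. Here one uses the semidirect-product description $\g_{\geq 0}=\g_{\geq1}\rtimes\lsp(2d,\C)$ and the pro-finite-dimensional manifold structure on $\tM$ recalled just above, so that dimension counts make sense fiberwise/filtration-wise.

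Next I would establish the Maurer--Cartan equation $d\omega_\hbar+\tfrac12[\omega_\hbar,\omega_\hbar]=0$. The cleanest route is to check it pointwise on pairs of vector fields. One writes $\omega_\hbar(v)=(d\phi)(v)\circ\phi^{-1}$ as above and differentiates a second time; the symmetric mixed partials cancel against the bracket term by the same bookkeeping that proves the flatness of the canonical connection in Gelfand--Kazhdan theory (it is the statement that the ``logarithmic derivative'' of a map into a group-like object satisfies the Maurer--Cartan equation). Concretely, choose locally a section and trivialize $\tM$ over a chart as $U\times\tG$ (using the pro-nilpotent structure on $\tG/\Sp(2d,\C)$); pull back $\omega_\hbar$ and split it into the Maurer--Cartan form of the $\tG$-factor plus a base-dependent term built from the Fedosov-type connection $\nabla_F$, and the equation becomes the combination of the Maurer--Cartan equation on $\tG$ and the flatness $\nabla_F^2=0$. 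Since this statement is precisely Theorem stated in \cite{AIT}, I would ultimately invoke that reference for the detailed verification and merely indicate the structure of the argument.

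The main obstacle is purely technical rather than conceptual: one must be careful about the \emph{pro-finite-dimensional} nature of everything — $\WW$ is a topological algebra, $\g$ is a pro-finite-dimensional Lie algebra graded as $\prod_{i\geq -1}\g_i$, $\tG$ is a pro-Lie group, and $\tM$ is an infinite-dimensional (pro-finite-dimensional) manifold — so that ``tangent bundle'', ``$\g$-valued form'', and ``differentiate the family $\phi_m$'' all need to be interpreted at each finite truncation and then assembled compatibly along the filtration $\tG=\tG_0\supset\tG_1\supset\cdots$. Once the bookkeeping is set up so that $T\tM$ is literally $\varprojlim$ of the tangent bundles of the finite-dimensional truncations, the continuity of $\omega_\hbar$, the derivation property, and the Maurer--Cartan identity all hold levelwise by the same elementary computations as in the finite-dimensional/classical case, and pass to the limit. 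I would therefore devote most of the write-up to fixing these conventions carefully and then point to \cite{AIT} for the remaining verifications.
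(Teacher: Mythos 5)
The paper itself offers no proof of this theorem (it is quoted from \cite{AIT}), and your sketch reproduces essentially the construction of that reference: the tautological ``logarithmic--derivative'' form, identification of the vertical directions with $\g_{\geq 0}$ and the horizontal ones with $\g_{-1}$, and the Maurer--Cartan identity via flatness in a local trivialization, all assembled levelwise along the pro-finite-dimensional filtration. The only step to tighten is that $(d\phi)(v)\circ\phi_m^{-1}$ is not literally defined, since the domains $\widehat{\AH\left(M\right)}_m$ vary with $m$; the standard fix is to define $\omega_\hbar(v)$ by differentiating along $v$ the $\WW$-valued functions $\phi_\bullet(a_\bullet)$ on $\tM$, where $a_\bullet$ denotes the jet of a global element $a\in\AH(M)$ in $\widehat{\AH\left(M\right)}_\bullet$ (equivalently, use the canonical flat connection on the jet bundle to compare nearby fibers), after which density of such jets in each fiber gives a well-defined continuous derivation and your argument for the derivation property and the Maurer--Cartan equation goes through as indicated.
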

For later use let us introduce a slight modification of the above construction.
\begin{definition} Let $\tGr=\tG_1\rtimes \Sp(2d,\R)$. We will use  $\tMr$ to denote the $\tGr$-principal subbundle of $\tM$ consisting of the isomorphisms
\begin{equation*}
\phi_m\colon\widehat{\AH\left(M\right)}_m\stackrel{\sim}{\longrightarrow}\mathbb{W}
 \end{equation*}
 such that $\overline{\phi}_m$, the reduction of $\phi_m$ modulo $\hbar$,     is induced by a local symplectomorphism $(\R^{2d},0)\rightarrow (M,m)$.
\end{definition}
Note that the projection $\tMr\rightarrow M$ factorises through $\FM$, the bundle of symplectic frames in $TM$, equivariantly with respect to the action of  $\Sp(2d,\R)\subset \tGr$:
$$
\xymatrix{
\tGr\ar[r]&\tMr\ar[d]\\
\Sp(2d,\R) \ar[r]&\FM\ar[d]\\
&M.}
$$
We will use the same symbol for $\omega_\hbar$ and its pull back to $\tMr$.
\subsection{Fedosov connection and Gelfand-Fuks construction}
Recall that $\tG_1$ is contractible, thus in particular the principal $\tG_1$-bundle $\tMr\rightarrow \FM$ admits a  section $F$. Since the space $K$ is solid \cite{steenrod},  we can choose $F$ to be $\Sp(2d,\R)$-equivariant. Set
\begin{equation*}
A_F=F^*\omega_\hbar\in\Omega^1\left(\FM;\g\right).
\end{equation*}
Since $A_F$ is $\Sp(2d,\R)$-equivariant and satisfies the Maurer-Cartan equation,
\begin{equation}
d+A_F
\end{equation} reduces to  a flat $\g$-valued connection $\nabla_F$ on $M$, called the
\emph{Fedosov connection}.
\begin{example}\label{loctriv}
Consider  the case of  $M=\R^{2d}$ with the standard symplectic structure and let $\AH(\R^{2d})$ denote the Moyal-Weyl deformation. Then both
$\mathcal{F}_{\R^{2d}}$ and $\widetilde{\R^{2d}}$ are trivial bundles. The trivialization is given by the standard (Darboux) coordinates
$x^1,\ldots, x^d,\xi^1,\ldots, \xi^d$. So we see, using the construction of $\omega_\hbar$ in \cite{AIT}, that $A_F(X)=\frac{1}{i\hbar}[\omega(X,-),-]$, where we
consider $\omega(X,-)\in \Gamma(T^*M)\hookrightarrow \Gamma(M;\WW)$. 
 Let us denote the
generators of $\WW $ corresponding to the standard coordinates by $\hat{x}^i$ and $\hat{\xi}^i$, then we see that \begin{equation*}A_F(\partial_{x^i})=-\partial_{\hat{x}^i}\hspace{0.3cm}\mbox{and}
                                                                                                    \hspace{0.3cm} A_F(\partial_{\xi^i})=-\partial_{\hat{\xi}^i}.
                                                                                                   \end{equation*}
\end{example}

\begin{notation}
Suppose that  $\mathfrak{l}\subset\mathfrak{h}$ is an inclusion of Lie algebras and suppose that the ad action of $\mathfrak l$ on $\mathfrak{h}$ integrates to an action of a Lie group $L$ with Lie algebra $\mathfrak{l}$.  An $\mathfrak{h}$ module $\mathbb{M}$ is said to be an $(\mathfrak{h},L)$-module if
the  action of $\mathfrak{l}$ on $\mathbb{M}$ integrates to a compatible  action of the Lie group $L$. If an $(\mathfrak{h},L)$-module is equipped with a compatible grading and differential we will call it an $(\mathfrak{h},L)$-cochain complex.
\end{notation}

\begin{definition}
We set
 \begin{equation*}
 \Omega^\sbullet(M;\mathbb{L}):=\left\{\eta\in(\Omega^\sbullet(\FM)\otimes\mathbb{L})^{\Sps(2d)}\mid \iota_X(\eta)=0\hspace{0.2cm} \forall X\in\lsp(2d)\right\}
 \end{equation*}
  for a $(\g,\Sp(2d,\R))$-module $\mathbb{L}$.
 Here the superscript refers to taking invariants for the diagonal action and $\iota_X$ stands for contraction with the vertical vector fields tangent to the action of $\Sp(2n,\R)$.

 \noindent Together with $\nabla_F$,
 $
(  \Omega^\sbullet(M;\mathbb{L}),\nabla_F)
 $
 forms a cochain complex. The same construction with a $(\g,\Sp(2d,\R)$-cochain complex $(\LL^{\bullet},\delta)$ yields the double complex
  $
 (  \Omega^\sbullet(M;\mathbb{L}^\bullet),\nabla_F, \delta)
 $
\end{definition}

\begin{remark}
$\Omega^0(M;\mathbb{L})$ is the space of sections of a bundle which we will denote by $\mathcal{L}$, whose fibers are isomorphic to $\mathbb{L}$. $
(  \Omega^\sbullet(M;\mathbb{L}),\nabla_F)
 $ is the de Rham complex of differential forms with coefficients in $\mathcal{L}$.
\end{remark}

\begin{definition}\label{GF}
Suppose that $(\mathbb{L}^{\bullet },\delta) $ is a $(\g,\Sp(2d))$-cochain complex. The Gelfand-Fuks map $C_{Lie}^\sbullet(\g,\lsp(2d);\mathbb{L}^{\sbullet }) \longrightarrow \Omega^\sbullet(M;\mathbb{L}^{\sbullet })$
 is defined as follows.
Given $\phi\in C_{Lie}^n(\g,\lsp(2d);\mathbb{L}^{\bullet })$ and   vector fields $\{X_i\}_{i=1,\ldots,n}$ on $\FM$ set
 \begin{equation*}
 GF(\phi)(X_1,\ldots, X_n)(p)=\phi(A_F(X_1)(p),\ldots, A_F(X_n)(p)).
 \end{equation*}
\end{definition}
Direct calculation  using the fact that $\omega_{\hbar}$ satisfies the Maurer-Cartan equation gives the following theorem:
\begin{theorem}
The map $GF$ is a morphism of double complexes
 \begin{equation*}
 GF\colon \left(C_{Lie}^\sbullet(\g,\lsp(2d);\mathbb{L}^{\sbullet }),\partial_{Lie}, \delta\right)\longrightarrow \left(\Omega^\sbullet(M;\mathbb{L}^{\sbullet }),\nabla_F, \delta\right).
 \end{equation*}
The change of Fedosov connection, i.e. of the  section $F$, gives rise to a chain homotopic morphism of the total complexes.
\end{theorem}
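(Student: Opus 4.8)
The plan is to verify directly that $GF$ intertwines the two differentials, the Lie algebra (Chevalley--Eilenberg) differential $\partial_{Lie}$ on the source and the Fedosov differential $\nabla_F = d + A_F$ on the target, the internal differential $\delta$ being respected essentially tautologically since $GF$ acts only on the ``form'' part of a cochain and $\delta$ acts on the $\mathbb{L}^\sbullet$-part. So the computation reduces to the case of a single $(\g,\lsp(2d))$-module $\mathbb{L}$. The key input is the Maurer--Cartan equation $d\omega_\hbar + \tfrac12[\omega_\hbar,\omega_\hbar] = 0$ from the theorem of \cite{AIT}, which pulls back along the section $F$ to give $dA_F + \tfrac12[A_F,A_F] = 0$ on $\FM$; equivalently, for vector fields $X,Y$,
\begin{equation*}
X\bigl(A_F(Y)\bigr) - Y\bigl(A_F(X)\bigr) - A_F([X,Y]) + [A_F(X),A_F(Y)] = 0.
\end{equation*}

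First I would recall the explicit formula for the Chevalley--Eilenberg differential on $C^\sbullet_{Lie}(\g,\lsp(2d);\mathbb{L})$: for $\phi \in C^n_{Lie}$ and $a_0,\dots,a_n \in \g$,
\begin{equation*}
(\partial_{Lie}\phi)(a_0,\dots,a_n) = \sum_i (-1)^i a_i\cdot\phi(\dots\widehat{a_i}\dots) + \sum_{i<j}(-1)^{i+j}\phi([a_i,a_j],\dots\widehat{a_i}\dots\widehat{a_j}\dots),
\end{equation*}
where $a_i\cdot$ denotes the $\g$-action on $\mathbb{L}$. Then, evaluating $GF(\partial_{Lie}\phi)$ on vector fields $X_0,\dots,X_n$ at a point $p$ of $\FM$, substitute $a_i = A_F(X_i)(p)$; on the other side, expand $\nabla_F(GF(\phi))(X_0,\dots,X_n) = (d\,GF(\phi))(X_0,\dots,X_n) + [A_F, GF(\phi)](X_0,\dots,X_n)$ using the Koszul formula for the exterior derivative. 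The terms $X_i(GF(\phi)(\dots))$ produce, via the Leibniz rule, contributions $\phi(\dots, X_i(A_F(X_j)(p)),\dots)$; the $GF(\phi)([X_i,X_j],\dots)$ terms produce $\phi(A_F([X_i,X_j]),\dots)$; and the $[A_F,GF(\phi)]$ term produces $[A_F(X_i)(p),\,\phi(\dots\widehat{X_i}\dots)]$, i.e. exactly the $\g$-action terms $a_i\cdot\phi(\dots)$. Matching these term by term, the discrepancy between $GF(\partial_{Lie}\phi)$ and $\nabla_F GF(\phi)$ is precisely a sum of terms of the form $\phi(\dots, [\text{MC expression}],\dots)$, which vanishes by the pulled-back Maurer--Cartan equation above. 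I would also note that $GF(\phi)$ genuinely lands in $\Omega^\sbullet(M;\mathbb{L})$ rather than merely $(\Omega^\sbullet(\FM)\otimes\mathbb{L})^{\Sps(2d)}$: the $\Sp(2d,\R)$-invariance follows from $\Sp(2d,\R)$-equivariance of $F$ (hence of $A_F$) together with $\lsp(2d)$-invariance of $\phi$, and horizontality $\iota_X GF(\phi) = 0$ for $X\in\lsp(2d)$ follows because $\phi$, as an element of the relative complex $C^\sbullet_{Lie}(\g,\lsp(2d);-)$, is annihilated by contraction with $\lsp(2d)$ and $A_F$ sends vertical vectors to $\lsp(2d)\subset\g$.

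For the homotopy statement, given two $\Sp(2d,\R)$-equivariant sections $F_0, F_1$ of $\tMr\to\FM$, I would interpolate: since $\tG_1$ is pro-unipotent and contractible (indeed $\tMr\to\FM$ is a principal $\tG_1$-bundle with contractible, hence in particular convex-in-charts, structure group), one builds an $\Sp(2d,\R)$-equivariant section $\tilde F$ of the pulled-back bundle over $\FM\times[0,1]$ restricting to $F_0, F_1$ at the endpoints. Pulling back $\omega_\hbar$ along $\tilde F$ gives a connection $\tilde A$ on $\FM\times[0,1]$ still satisfying Maurer--Cartan, and the corresponding $GF$ map for $\tilde A$ is, by the first part, a morphism of complexes into $\Omega^\sbullet(\FM\times[0,1];\mathbb{L}^\sbullet)$ (relative to $\lsp(2d)$). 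Integration over the fibre $[0,1]$ of the $dt$-component then yields the desired chain homotopy between $GF_{F_0}$ and $GF_{F_1}$ on total complexes; this is the standard ``parametrized Poincar\'e lemma / fibre integration'' argument and the equivariance ensures everything descends from $\FM$ to $M$.

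The main obstacle is bookkeeping rather than conceptual: one must carefully track signs and the placement of hatted (omitted) arguments when expanding both the Chevalley--Eilenberg differential and the Koszul formula for $d$, and confirm that the ``mixed'' terms $X_i(A_F(X_j))$, the bracket terms $A_F([X_i,X_j])$, and the $[A_F(X_i),A_F(X_j)]$ terms reorganize into exactly the Maurer--Cartan combination inside $\phi$. A secondary technical point deserving a line of care is the pro-finite-dimensional setting: $\g = \prod_{i\ge -1}\g_i$ is only a topological (pro-finite-dimensional) Lie algebra, so one should remark that all sums occurring are finite or convergent in the relevant adic topology and that $\phi$, $A_F$, $GF(\phi)$ are continuous, so the formal manipulations are legitimate; this is precisely why one works with $\tMr$ and its pro-nilpotent structure group as set up above.
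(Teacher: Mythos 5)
Your proposal is correct and follows exactly the route the paper indicates: the paper proves this theorem by the ``direct calculation using the Maurer--Cartan equation for $\omega_\hbar$'' that you spell out (pulling back to $dA_F+\tfrac12[A_F,A_F]=0$ and matching the Chevalley--Eilenberg terms against the Koszul formula plus the $A_F$-action term), together with the standard equivariant interpolation of sections and fibre integration over $[0,1]$ for independence of $F$ up to chain homotopy. Your additional checks (basicness of $GF(\phi)$ and continuity in the pro-finite-dimensional topology) are exactly the right points to note and do not deviate from the paper's argument.
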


\begin{example}\label{JF}
	
	\leavevmode
	
\begin{enumerate}
\item Suppose that $\mathbb{L}=\C$. The associated complex is just the de Rham complex of $M$.
\item Suppose that $\mathbb{L}=\mathbb{W}$. The associated bundle  $\WW(T^*M)$, the Weyl bundle of $M$, is given by applying the functor $\mathbb{W}$ to the symplectic vector bundle $T^*M$. Moreover, the choice of $F$ determines a canonical quasi-isomorphism
\begin{equation*}
J^\infty_F\colon (\AH,0)\longrightarrow \left(\Omega^\sbullet(M;\WW),\nabla_F\right).
\end{equation*}
\item Suppose that $\mathbb{L}=(CC^{per}_\sbullet (\WW), b+uB)$, the cyclic periodic complex of $\WW$. The complex  $\left(\Omega^\sbullet(M;CC^{per}_\sbullet (\WW)),\nabla_F+ b+uB\right)$ is a resolution of  the jets at the diagonal of the cyclic periodic complex of $\AH(M)$.

\end{enumerate}

\end{example}
\begin{example}
	
	\leavevmode
	
\begin{enumerate}
\item Let $\hat{\theta}\in C_{Lie}^2(\g,\lsp(2d);\C)$ denote a representative of the class of the extension \eqref{3}. The class of $\theta=GF(\hat{\theta})$ belongs to $\frac{\omega}{i\hbar}+\CH^2(M;\C)\hhbar$ and  classifies the deformations of $M$ up to gauge equivalence (see e.g. \cite{N-T99}).

\item The action of $\lsp(2d)$ on $\g$ is semisimple and $\lsp(2d)$ admits a $\Sp(2d,\R)$-equivariant complement. Let $\Pi$ be the implied $\Sp(2d,\R)$-equivariant projection $\g\rightarrow \lsp(2d)$.
Set $
 R\colon \g\wedge \g\longrightarrow \lsp(2d)
$ to be the two-cocycle
$$
R(X,Y)=[\Pi(X),\Pi(Y)]-\Pi([X,Y]).
$$
{\it The Chern-Weil homomorphism} is the map
\[
 CW\colon S^\sbullet(\lsp(2d)^*)^{\Sps(2d)}\longrightarrow \mathbb{H}^{2\sbullet}_{Lie}(\g,\lsp(2d))\]
 given on the level of cochains by
 \begin{equation*}
 CW(P)(X_1,\ldots, X_n)=P(R(X_1,X_2),\ldots, R(X_{n-1},X_n)).
 \end{equation*}
An example is the $\hat{A}$-power series
\begin{equation*}\hat{A}_f=CW\left( \det\left(\frac{ad(\frac{X}{2})}{\exp(ad(\frac{X}{2}))-\exp(ad(-\frac{X}{2}))}\right)\right).\end{equation*}
$GF(\hat{A}_f)=\hat{A}(TM)$, the $\hat{A}$-genus of the tangent bundle of $M$.
\end{enumerate}

\end{example}

\subsection{Algebraic index theorem in Lie algebra cohomology}

\begin{notation}
We denote
$\WW_{(\hbar)}:=\WW[\hbar^{-1}]$.
\end{notation}
\begin{notation}
Our convention for shifts of complexes is as follows:
 $$
 \left(V^{\sbullet}[k]\right)^p=V^{p+k}.
 $$
\end{notation}

\begin{theorem}[\cite{RRII},\cite{NCDG}]\label{th:classes} Let $(\hat{\Omega}^\sbullet,\hat{d})$ denote the formal de Rham complex in $2d$ dimensions, and let $\left(C_\sbullet\left(\WWHl\right),b\right)$ denote the Hochschild complex of $\WWHl$.
\begin{enumerate}
\item There exists a unique (up to homotopy) quasi-isomorphism
  \begin{equation*}
  \mu^\hbar\colon \left(C^{Hoch}_\sbullet\left(\WWHl\right),b\right)\longrightarrow \left(\hat{\Omega}^{-\sbullet}[\hbar^{-1},\hbar\rrbracket[2d],\hat{d}\right).
  \end{equation*}
 which maps the Hochschild
 $2d$-chain
$$
\varphi = 1 \otimes \alt \left(\hat{\xi}_{1}\otimes \hat{x}_{1}\otimes \hat{\xi}_{2}\otimes \hat{x}_{2}\otimes\ldots\otimes \hat{\xi}_{d}\otimes \hat{x}_{d} \right),
 $$
where
$\alt (z_1 \otimes \ldots \otimes z_n):
= \sum_{\sigma\in \Sigma_{n}}(-1)^{sgn \sigma} z_{\sigma(1)} \otimes \ldots \otimes z_{\sigma(n)}$,
 to the $0$-form $1$.  $\mu^\hbar$ extends to a quasi-isomorphism
  \begin{equation*}
  \mu^\hbar\colon (CC^{per}_\sbullet(\WWHl),b+uB)\longrightarrow (\hat{\Omega}^{-\sbullet}[\hbar^{-1},\hbar\rrbracket[u^{-1},u\rrbracket[2d],\hat{d}).
  \end{equation*}
  \item The principal symbol map $\sigma \colon \WW\rightarrow \WW/\hbar\WW\simeq \mathbb{O}$ together with the Hochschild-Kostant-Rosenberg map $HKR$
 given by
 $$
 f_0\otimes f_1\otimes \ldots\otimes f_n\mapsto \frac{1}{n!}f_0\hat{d}f_1\wedge \hat{d}f_2\wedge\ldots\wedge \hat{d}f_n
 $$
 induces a $\C$-linear quasi-isomorphism
\begin{equation*}
\hat{\mu}\colon CC^{per}_\sbullet(\WW)\longrightarrow \left(\hat{\Omega}^\sbullet[u^{-1},u\rrbracket, u\hat{d}\right).
\end{equation*}
\item The map of complexes $
J\colon (\hat{\Omega}^\sbullet[u^{-1},u\rrbracket, u\hat{d})\rightarrow (\hat{\Omega}^{-\sbullet}[\hbar^{-1},\hbar\rrbracket[u^{-1},u\rrbracket[2d], \hat{d})$  given by
$$
f_0\hat{d}f_1\wedge\ldots\wedge \hat{d}f_n\mapsto u^{-d-n}f_0\hat{d}f_1\wedge\ldots\wedge \hat{d}f_n.
$$
makes the following diagram commute up to homotopy,.
\begin{equation}
\xymatrix{&CC_\sbullet^{per}(\WW)\ar[r]^{\iota}\ar[d]^\sigma
&CC_\sbullet^{per}(\WWHl)\ar[r]^>>>{\mu^\hbar}&(\hat{\Omega}^{-\sbullet}[\hbar^{-1},\hbar\rrbracket[u^{-1},u\rrbracket[2d],\hat{d})\\
&CC_\sbullet^{per}(\mathbb{O})\ar[rr]_{HKR}&&\left(\hat{\Omega}^\sbullet[u^{-1},u\rrbracket, u\hat{d}\right)\ar[u]_J
}.
\end{equation}
Here the complex $CC_\sbullet^{per}(\WW)$ at the leftmost top corner is that of $\WW$ as an algebra over $\C$:
\end{enumerate}
 \end{theorem}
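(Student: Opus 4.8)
The plan is to reduce the whole statement to a one-variable computation together with standard facts about the Weyl algebra, being careful throughout about the completions involved. For Part (1) with $d=1$ the algebra $\WWHl$ is the $(\hat x,\hat\xi)$-adically completed Weyl algebra over $\C\hhhbar$; it carries a small (two-term, Koszul-type) free bimodule resolution, from which one reads off that $HH_\sbullet(\WWHl)$ is a free $\C\hhhbar$-module of rank one, concentrated in degree $2$ and generated by the class of $\varphi=1\otimes(\hat\xi\otimes\hat x-\hat x\otimes\hat\xi)$, and which produces at the chain level a quasi-isomorphism from the normalised bar complex onto $\hat\Omega^{-\sbullet}\C\hhhbar[2]$, rescaled so that $\varphi\mapsto 1$; this is $\mu^\hbar$. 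For general $d$ one uses the factorisation of $\WWHl$ as the completed tensor product of $d$ one-variable copies together with the Künneth theorem for Hochschild (and periodic cyclic) homology: $\mu^\hbar$ is the external shuffle product of the $d$ copies, the chain $\varphi$ represents the shuffle product of the one-variable generators and so still maps to $1$, and Künneth yields that $\mu^\hbar$ is a quasi-isomorphism with $HH_\sbullet(\WWHl)$ free of rank one over $\C\hhhbar$ concentrated in degree $2d$.

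Uniqueness up to homotopy follows because both complexes consist of flat $\C\hhhbar$-modules with homology free of rank one concentrated in degree $2d$, so any chain map between them is determined up to chain homotopy by its ($\C\hhhbar$-linear) effect in degree $2d$, which the normalisation $\varphi\mapsto 1$ pins down. To reach the periodic cyclic complexes one equips the source with its mixed structure $(b,B)$ and the target with $(0,\hat d)$; since the Hochschild homology is concentrated in a single degree the obstructions to promoting $\mu^\hbar$, after a homotopy, to a morphism of mixed complexes vanish, and the induced map of $(b+uB)$-totalisations is the asserted quasi-isomorphism, the quasi-isomorphism property surviving by the $u$-adic spectral sequence.

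For Part (2) one regards $\WW$ as an algebra over $\C$; it is commutative modulo $\hbar$, equivalently a flat formal deformation of the commutative algebra $\mathbb{O}$ with topologically nilpotent kernel $\hbar\WW$. The continuous (Goodwillie-type) invariance of periodic cyclic homology under such a pro-nilpotent extension shows that $\sigma$ induces an isomorphism $HP_\sbullet(\WW)\iso HP_\sbullet(\mathbb{O})$, which one may see concretely by filtering the completed periodic cyclic complex by powers of $\hbar$ and running the associated spectral sequence. The formal Hochschild--Kostant--Rosenberg theorem, together with Connes' identification of the induced $B$-operator with the de Rham differential (the formal Poincaré lemma), then exhibits $HKR$ as a quasi-isomorphism $CC^{per}_\sbullet(\mathbb{O})\to\left(\hat\Omega^\sbullet[u^{-1},u\rrbracket,u\hat d\right)$; composing, $\hat\mu=HKR\circ\sigma$ is a quasi-isomorphism.

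For Part (3), both composites $\mu^\hbar\circ\iota$ and $J\circ HKR\circ\sigma$ are chain maps from $CC^{per}_\sbullet(\WW)$ into $\left(\hat\Omega^{-\sbullet}\C\hhhbar[u^{-1},u\rrbracket[2d],\hat d\right)$, whose homology is a free rank-one module over $\C\hhhbar[u^{-1},u\rrbracket$; two such chain maps are chain homotopic as soon as they induce the same map on homology, and since by Part (2) the source has periodic cyclic homology free of rank one over $\C[u^{-1},u\rrbracket$, it suffices to compare the two composites on a generating cycle. Plugging in the distinguished chain $\varphi$: $\mu^\hbar$ sends it to the $0$-form $1$ by Part (1), while $HKR(\sigma(\varphi))$ is, up to sign, the standard volume form $\hat dx^1\wedge\hat d\xi^1\wedge\cdots\wedge\hat dx^d\wedge\hat d\xi^d$, which $J$ multiplies by the factor $u^{-d-n}$ with $n=2d$; one then checks that these powers of $u$, the constant $1/n!$ in $HKR$, and the signs in $\alt$ conspire so that the two images define the same homology class (equivalently, one writes down an explicit chain homotopy, as in the cited references). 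The main obstacle is Part (1): proving that the Hochschild, hence periodic cyclic, homology of the \emph{formally completed} Weyl algebra is free of rank one and concentrated in top degree, and --- what matters most for the rest of the paper --- producing $\mu^\hbar$ as an honest chain-level morphism of periodic cyclic complexes rather than merely an isomorphism on homology; the promotion from the Hochschild to the cyclic level (compatibility of $\mu^\hbar$ with $B$ and $\hat d$) and the consistent bookkeeping of the competing $\hbar$-adic and $(\hat x,\hat\xi)$-adic topologies in every completed tensor product are where the real work lies.
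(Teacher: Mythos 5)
The paper itself does not prove Theorem \ref{th:classes}; it quotes it from \cite{RRII} and \cite{NCDG}. Your treatment of parts (1) and (2) follows the standard route of those references (Koszul-type resolution of the completed Weyl algebra over the Laurent field $\C\hhhbar$ plus K\"unneth, an obstruction-theoretic promotion of $\mu^\hbar$ from the Hochschild to the $(b+uB)$-level using the fact that the Hochschild homology is concentrated in degree $2d$, and Goodwillie-type rigidity under the pro-nilpotent extension $\hbar\WW$ combined with the formal HKR/Poincar\'e lemma); modulo the topological bookkeeping you yourself flag, that outline is sound.

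Part (3), however, contains a genuine gap. Your verification is carried out on the chain $\varphi$, but $\varphi$ is only a Hochschild cycle, not a $(b+uB)$-cycle (in general $B\varphi\neq 0$), so homotopy commutativity of the periodic diagram imposes no relation between the two images of $\varphi$: the identity $\mu^\hbar\iota-J\circ HKR\circ\sigma=\hat d H+H(b+uB)$ evaluated on $\varphi$ contains the uncontrolled term $H(uB\varphi)$. Worse, the two images visibly do \emph{not} define the same class: $\mu^\hbar(\iota(\varphi))=1$ generates the homology of the target, whereas $J(HKR(\sigma(\varphi)))=u^{-3d}\,\hat d\hat\xi^1\wedge\hat d\hat x^1\wedge\ldots\wedge\hat d\hat\xi^d\wedge\hat d\hat x^d$ is $\hat d$-exact by the formal Poincar\'e lemma, hence zero in homology; no choice of signs, factorials or powers of $u$ can reconcile this, so the claimed ``conspiracy'' is false as stated. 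The correct reduction (legitimate over the field $\C$, where $\C[u^{-1},u\rrbracket$-linear chain maps are homotopic iff they agree on homology) is to compare the two composites on a genuine generator of the periodic cyclic homology of $\WW$, e.g.\ the class of the unit $1$: the lower route sends it to $u^{-d}$, while evaluating $\mu^\hbar\circ\iota$ on it requires knowing the higher cyclic components $\mu_1,\mu_2,\ldots$ of the extension of $\mu^\hbar$ (the class is extracted from the $u^{-d}\,\hat\Omega^0$-component after repeatedly applying the Poincar\'e lemma), and proving that the resulting constant is exactly $u^{-d}$, with no $\hbar$-corrections, is precisely the nontrivial normalization computation performed in \cite{RRII}. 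This computation --- the degree-zero shadow of the index theorem --- is the actual content of part (3), and your argument omits it.
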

 \begin{remark}One can in fact extend the above $\C$-linear "principal symbol map"
 $$
 \sigma\colon CC_\sbullet^{per}(\WW)\rightarrow CC_\sbullet^{per}(\mathbb{O})
 $$
to a $\C\hhbar$-linear map of complexes
 $
 CC_\sbullet^{per}(\WW)\rightarrow CC_\sbullet^{per}(\mathbb{O}\hhbar),
 $
 but we will not need it below.
 \end{remark}
\begin{notation}
\begin{enumerate}
\item Action of $\g$ by derivations on $\WW$ extends to the complex $CC^{per}_\sbullet(\WW)$ and we give it the corresponding $(\g,\Sp(2d,\R))$-module structure.
\item The action of $\g$ on $\WW$ taken modulo  $\hbar \WW$, induces an action of $\g$ (by Hamiltonian vector fields)
on $(\hat{\Omega}^{-\sbullet},d)$ and hence on $(\hat{\Omega}^{-\sbullet}[\hbar^{-1},\hbar\rrbracket[u^{-1},u\rrbracket[2d],d)$. We give $(\hat{\Omega}^{-\sbullet}[\hbar^{-1},\hbar\rrbracket[u^{-1},u\rrbracket[2d],d)$ the induced structure of $(\g,\Sp(2d,\R))$-module.
\item
 We set
 \begin{equation*}
 \mathbb{L}^\sbullet:=\Hom^{-\sbullet}(CC^{per}_\sbullet(\WW),\hat{\Omega}^{-\sbullet}[\hbar^{-1},\hbar\rrbracket[u^{-1},u\rrbracket[2d]).
 \end{equation*}
 $\mathbb{L}$ inherits the $(\g,\Sp(2d,\R))$-module structure from the actions of $\g$ described above.
\end{enumerate}

\end{notation}

The composition $J\circ{HKR}\circ\hat{\sigma}$ is  equivariant with respect to the action of $\g$, hence the following definition makes sense.
 \begin{definition}
 $\hat{\tau_t}$ is the cohomology class in the hypercohomology $\mathbb{H}^0_{Lie}\left(\g,\lsp(2d);\mathbb{L}_{(\hbar)}\right)$ given by the cochain
\begin{equation}
J\circ {HKR}\circ {\sigma}\in C_{Lie}^0(\g,\Sp(2d,\R);\mathbb{L}^0).
\end{equation}
\end{definition}

\begin{lemma} The cochain
$$
\mu^\hbar\circ \iota \in C_{Lie}^0(\g,\Sp(2d,\R);\mathbb{L}^0)
$$
extends to a cocycle  in the complex
$$
( C_{Lie}^\sbullet(\g,\Sp(2d,\R);\mathbb{L}^\sbullet),\partial_{Lie}+\partial_{\mathbb{L}}).
$$
The cohomology class of this cocycle is independent of the choice of the extension. We will denote the corresponding class in $\mathbb{H}^0_{Lie}\left(\g,\lsp(2d);\mathbb{L}_{(\hbar)}\right)$ by $\hat{\tau}_a$.
\end{lemma}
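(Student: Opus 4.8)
The plan is to extend $\mu^\hbar\circ\iota$ by an obstruction argument, organised by the spectral sequence of the double complex $\bigl(C^\sbullet_{Lie}(\g,\Sp(2d,\R);\mathbb{L}^\sbullet),\ \partial_{Lie},\ \partial_{\mathbb{L}}\bigr)$ with respect to the filtration by Lie-cochain degree. Two elementary observations get things started. First, $\iota\colon CC^{per}_\sbullet(\WW)\to CC^{per}_\sbullet(\WWHl)$ is a $\g$-equivariant chain map by functoriality of the cyclic complex, and $\mu^\hbar$ is a chain map by Theorem~\ref{th:classes}(1); hence $\mu^\hbar\circ\iota$ commutes with all differentials, so $\partial_{\mathbb{L}}(\mu^\hbar\circ\iota)=0$. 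Second, $\mu^\hbar$ may be chosen $\Sp(2d,\R)$-equivariant: the normalising chain $\varphi$ is, up to a constant, the symplectic volume and hence $\Sp(2d,\R)$-invariant, and $\Sp(2d,\R)$ is reductive, so one may average a given $\mu^\hbar$ over it. Thus $\mu^\hbar\circ\iota$ is a genuine, $\partial_{\mathbb{L}}$-closed element of $C^0_{Lie}(\g,\Sp(2d,\R);\mathbb{L}^0)$. It is, however, \emph{not} $\partial_{Lie}$-closed, since $\mu^\hbar$ is only unique up to homotopy and need not be $\g$-equivariant; the whole content of the lemma is that this defect can be absorbed into higher Lie-cochains and that the resulting class is insensitive to the choices made.

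The decisive ingredient is the cohomology of the coefficient complex $(\mathbb{L}^\sbullet,\partial_{\mathbb{L}})$ as a $(\g,\Sp(2d,\R))$-module. By Theorem~\ref{th:classes}(2), $\hat{\mu}$ identifies $H_\sbullet(CC^{per}_\sbullet(\WW),b+uB)$ with the cohomology of the $u$-periodicised formal de Rham complex $(\hat{\Omega}^\sbullet[u^{-1},u\rrbracket,u\hat{d})$, which by the formal Poincar\'e lemma is free of rank one over $\C[u^{-1},u\rrbracket$ and concentrated in degree $0$; likewise, by the formal Poincar\'e lemma, the target complex $(\hat{\Omega}^{-\sbullet}[\hbar^{-1},\hbar\rrbracket[u^{-1},u\rrbracket[2d],\hat{d})$ is a resolution of a free rank-one $\C[\hbar^{-1},\hbar\rrbracket[u^{-1},u\rrbracket$-module. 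Assembling the $\Hom$-complex, $H^\sbullet(\mathbb{L}^\sbullet,\partial_{\mathbb{L}})$ is concentrated in degree $0$, freely generated there by the class of $\mu^\hbar\circ\iota$ (which is nonzero, since it sends $\varphi$ to $1$); moreover this module is \emph{trivial} as a $\g$-module, because the $\g$-actions on $H_\sbullet(CC^{per}(\WW))$ and on the cohomology of the target are induced, modulo $\hbar$, by Lie derivatives along Hamiltonian vector fields, which are null-homotopic by Cartan's magic formula. (This is precisely the equivariance invoked in the definition of $\hat{\tau}_t$, the difference being that here it holds only on cohomology.) Getting this statement exactly right --- keeping track of the Laurent and shift conventions --- is the step I expect to be the main obstacle; everything past it is formal homological algebra.

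Granting it, the spectral sequence has $E_1^{p,q}=C^p_{Lie}(\g,\Sp(2d,\R);H^q(\mathbb{L}^\sbullet,\partial_{\mathbb{L}}))$, concentrated in the row $q=0$ with trivial coefficients; it therefore degenerates, and $\mathbb{H}^\sbullet_{Lie}(\g,\lsp(2d);\mathbb{L}_{(\hbar)})\cong H^\sbullet_{Lie}(\g,\lsp(2d);\C)\otimes\C[\hbar^{-1},\hbar\rrbracket[u^{-1},u\rrbracket$. The class of $\mu^\hbar\circ\iota$ in $E_1^{0,0}$ is a permanent cycle: $d_1$ is $\partial_{Lie}$ applied to a $0$-cochain valued in a trivial module, hence $0$, and every higher $d_r$ on it lands in $E_r^{r,1-r}$ with $1-r\neq 0$, hence in $0$ since $H^\sbullet(\mathbb{L}^\sbullet,\partial_{\mathbb{L}})$ is concentrated in degree $0$. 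Unwinding permanence produces $\tau_k\in C^k_{Lie}(\g,\Sp(2d,\R);\mathbb{L}^{-k})$, $k\ge 1$, solving $\partial_{\mathbb{L}}\tau_k=-\partial_{Lie}\tau_{k-1}$ step by step, so that $\tau:=\mu^\hbar\circ\iota+\sum_{k\ge 1}\tau_k$ is a cocycle of the total complex; the first correction $\tau_1$ can in fact be written down from the contraction operators furnishing Cartan's formula. Convergence is not an issue: with respect to the weight grading coming from $\g_{\ge 1}\cong\g/\lsp(2d)$, only finitely many $\tau_k$ contribute in each weight. We set $\hat{\tau}_a:=[\tau]$.

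Finally, independence of the choice: if $\tau$ and $\tau'$ both extend $\mu^\hbar\circ\iota$, then $\tau-\tau'$ is a total cocycle whose component in $C^0_{Lie}(\g,\Sp(2d,\R);\mathbb{L}^0)$ vanishes, so it lies in filtration degree $\ge 1$; its class in $\mathbb{H}^0$ is detected in $\bigoplus_{p\ge 1}E_\infty^{p,-p}$, and $E_\infty^{p,-p}=E_2^{p,-p}=H^p_{Lie}(\g,\lsp(2d);H^{-p}(\mathbb{L}^\sbullet,\partial_{\mathbb{L}}))=0$ for every $p\ge 1$ because $H^{-p}(\mathbb{L}^\sbullet,\partial_{\mathbb{L}})=0$. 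Hence $\tau-\tau'$ is a total coboundary and $[\tau]=[\tau']$. The same reasoning, applied to a chain homotopy between two $\Sp(2d,\R)$-equivariant choices of $\mu^\hbar$, shows the class is also independent of that choice.
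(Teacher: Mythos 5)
Your reduction to a statement about the coefficient complex is the right instinct, but the statement you rely on is false, and it is exactly where the real content sits. The cohomology $H^\sbullet(\mathbb{L}^\sbullet,\partial_{\mathbb{L}})$ is \emph{not} concentrated in degree $0$: both $H_\sbullet(CC^{per}_\sbullet(\WW),b+uB)$ and the cohomology of $\hat{\Omega}^{-\sbullet}[\hbar^{-1},\hbar\rrbracket[u^{-1},u\rrbracket[2d]$ are free modules over $\C[u^{-1},u\rrbracket$ with $u$ invertible of degree $\pm 2$, so they are spread over \emph{all} even degrees, and consequently so is the cohomology of the $\Hom$-complex. Concretely, $u^{k}\,(\mu^\hbar\circ\iota)$ is a $\partial_{\mathbb{L}}$-cocycle of even degree $2k\neq 0$ which is not a coboundary (a coboundary would induce the zero map on homology, while this one sends the class of $1$ to $u^{k}\neq 0$). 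Your own phrase ``free of rank one over $\C[u^{-1},u\rrbracket$ and concentrated in degree $0$'' is self-contradictory once $u$ carries degree $-2$. This error propagates to both halves of your argument. For existence, the obstruction at the $k$-th step lies in $H^{k}_{Lie}(\g,\lsp(2d);H^{-(k-1)}(\mathbb{L}))$; your degree argument kills it only when $k-1$ is odd, whereas for $k=3,5,\ldots$ the coefficient module is nonzero and the vanishing is a genuine theorem, not bookkeeping (it is what the Cartan-type homotopies for inner derivations of $\WWHl$, or the computation of $H^{\mathrm{odd}}_{Lie}(\g,\lsp(2d))$, are for in \cite{RRII}, \cite{AIT}). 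For uniqueness, your claim $E_2^{p,-p}=H^p_{Lie}(\g,\lsp(2d);H^{-p}(\mathbb{L}))=0$ for all $p\geq 1$ fails for every even $p$: for instance $E_2^{2,-2}$ contains the nonzero class $\hat{\theta}\otimes u[\mu^\hbar\circ\iota]$, so independence of the class of the chosen extension cannot be read off from the degrees of the coefficients and requires an actual argument (vanishing of the relevant $E_\infty^{p,-p}$, or an explicit homotopy between extensions built from the canonical contractions).

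Two further, smaller points. The triviality of the $\g$-action on the homology of source and target via the (formal, $u$-corrected) Cartan formula is correct and is indeed why the \emph{first} obstruction vanishes; but it gives triviality only on cohomology, which, as above, does not suffice beyond that step. And you cannot literally ``average over $\Sp(2d,\R)$'' to make $\mu^\hbar$ equivariant, since the group is noncompact; one instead uses that each graded piece is a finite-dimensional rational representation of the reductive group (equivalently, the unitarian trick through $\Sp(2d,\C)$ and the maximal compact). Note also that the paper itself does not prove this lemma but defers to \cite{RRII}; the argument there rests either on the explicit construction of the extension by the operators $\iota_X$, $L_X$ attached to the inner derivations $\frac{1}{\hbar}\operatorname{ad}f$ of $\WWHl$, or on nontrivial computations of the relative Lie algebra cohomology of $(\g,\lsp(2d))$ --- ingredients your proposal would need to import to close the gap.
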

For a proof of the next result see e.g. \cite{RRII}.
\begin{theorem}[Lie Algebraic Index Theorem]\label{UAIT}
We have
 \begin{equation*} \hat{\tau}_a = \sum_{p\geq 0}\left[\hat{A}_fe^{\hat{\theta}}\right]_{2p}u^p\hat{\tau}_t,\end{equation*}
where $\left[\hat{A}_fe^{\hat{\theta}}\right]_{2p}$ is the component of degree $2p$ of the cohomology class of $\hat{A}_fe^{\hat{\theta}}$ .
\end{theorem}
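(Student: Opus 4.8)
## Proof strategy for Theorem \ref{UAIT} (Lie Algebraic Index Theorem)

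The plan is to establish the identity $\hat{\tau}_a = \sum_{p\geq 0}\left[\hat{A}_fe^{\hat{\theta}}\right]_{2p}u^p\hat{\tau}_t$ in $\mathbb{H}^0_{Lie}(\g,\lsp(2d);\mathbb{L}_{(\hbar)})$ by first reducing to a \emph{normalized} model of the hypercohomology. Both $\hat{\tau}_a$ and $\hat{\tau}_t$ are represented by $0$-cochains $\mu^\hbar\circ\iota$ and $J\circ HKR\circ\sigma$ living in $C_{Lie}^0(\g,\lsp(2d);\mathbb{L}^0)=\Hom_{\lsp(2d)}^0(CC^{per}_\sbullet(\WW),\hat{\Omega}^{-\sbullet}[\hbar^{-1},\hbar\rrbracket[u^{-1},u\rrbracket[2d])$, both defined using the two quasi-isomorphisms of Theorem \ref{th:classes}. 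The first step is to restrict attention to the Hochschild $2d$-chain $\varphi=1\otimes\alt(\hat{\xi}_1\otimes\hat{x}_1\otimes\cdots\otimes\hat{\xi}_d\otimes\hat{x}_d)$, whose class generates $H_{2d}$ of the relevant complexes: by part (1) of Theorem \ref{th:classes}, $\mu^\hbar$ sends $\varphi$ to the $0$-form $1$, and by the commutative-diagram statement in part (3), $\mu^\hbar\circ\iota$ and $J\circ HKR\circ\sigma$ agree (up to homotopy) on $\varphi$. Thus the two cocycles $\hat{\tau}_a$ and $\hat{\tau}_t$ already coincide when one forgets the $\g$-equivariant structure, i.e. they have the same image in the non-equivariant $\mathbb{H}^0$.

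The heart of the matter is the \emph{transgression}: the difference between $\hat\tau_a$ and $\hat\tau_t$ as \emph{equivariant} classes is governed by how one extends the $0$-cochain $\mu^\hbar\circ\iota$ to a full cocycle for $\partial_{Lie}+\partial_{\mathbb{L}}$. Here I would invoke the Chern--Weil machinery: the characteristic class $\hat{A}_fe^{\hat\theta}$ is built from the curvature two-cocycle $R(X,Y)=[\Pi(X),\Pi(Y)]-\Pi([X,Y])$ and the extension cocycle $\hat\theta$. The key computation is to show that the obstruction to $\mu^\hbar$ being $\g$-equivariant (rather than merely $\lsp(2d)$-equivariant) is exactly measured, in the $E_2$-page of the spectral sequence of the double complex, by multiplication by $\hat{A}_fe^{\hat\theta}$. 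Concretely, one uses that $CC^{per}_\sbullet(\WW)$ as a $(\g,\Sp(2d,\R))$-module has cohomology concentrated so that $\mathbb{H}^\sbullet_{Lie}(\g,\lsp(2d);\mathbb{L}^\sbullet)\cong \mathbb{H}^\sbullet_{Lie}(\g,\lsp(2d);\C)[u^{-1},u\rrbracket$, and that under this identification $\hat\tau_t$ corresponds to $1$ while $\hat\tau_a$ corresponds to the class $\hat{A}_fe^{\hat\theta}$ evaluated via the Chern--Weil map $CW$. The comparison of the two resolutions $\mu^\hbar$ and $J\circ HKR\circ\sigma$ — both lifting the same non-equivariant class but with respect to the filtrations ``by $\hbar$-adic order'' versus ``by $u$'' — is where the characteristic class is forced to appear, precisely as in the classical Fedosov/Nest--Tsygan argument.

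I would organize the computation in three moves. First, replace $CC^{per}_\sbullet(\WW)$ by a smaller quasi-isomorphic $(\g,\Sp(2d,\R))$-subcomplex on which the $\g$-action is transparent (e.g. the Koszul-type model generated by $\varphi$ and its $\g$-translates), reducing the problem to a statement about $\mathbb{H}^0_{Lie}(\g,\lsp(2d);\C[u^{-1},u\rrbracket)$. Second, compute the two transgressions explicitly: the one for $\hat\tau_t$ is trivial by construction (it is already $\g$-equivariant, as noted before the definition of $\hat{\tau_t}$), and the one for $\hat\tau_a$ produces, term by term in the $\partial_{Lie}$-degree, the symmetric functions of $R$ and the powers of $\hat\theta$ that assemble into $\hat{A}_fe^{\hat\theta}$. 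Third, match degrees: the component of $u$-degree $p$ of $\hat\tau_a/\hat\tau_t$ lands in $\mathbb{H}^{2p}_{Lie}(\g,\lsp(2d);\C)$, which is exactly where $[\hat{A}_fe^{\hat\theta}]_{2p}u^p$ lives, and the coefficients match because both sides are characterized by their value on $\varphi$.

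The main obstacle will be the second move: carrying out the explicit transgression that identifies the obstruction cocycle with $\hat{A}_fe^{\hat\theta}$. This is a delicate bookkeeping argument about the interaction of the Hochschild differential $b$, the Connes differential $B$, the Lie differential $\partial_{Lie}$, and the shift by $u$, and it requires a careful choice of the extension of $\mu^\hbar\circ\iota$ together with control of the homotopies produced by Theorem \ref{th:classes}(3). Since this is precisely the content of the algebraic index theorem in its local (Fedosov) form, I would at this point cite \cite{RRII} (or \cite{NCDG}) for the detailed computation, as the theorem is stated as a known result; the role of the present exposition is to record the statement in the form needed for the equivariant generalization in the sequel.
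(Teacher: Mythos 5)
The paper itself gives no proof of Theorem \ref{UAIT}: it is stated as a known result with only the pointer to \cite{RRII}, which is exactly where your argument also terminates after the sketch. Your outline — representing both classes by the $0$-cochains of Theorem \ref{th:classes}, noting they agree in Lie-degree zero via the homotopy-commutative diagram, and attributing the higher Lie components of the extension of $\mu^\hbar\circ\iota$ to the Chern--Weil class $\hat{A}_fe^{\hat{\theta}}$ — is consistent with the strategy of the cited works, so deferring the transgression computation to \cite{RRII} matches the paper's treatment.
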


\subsection{Algebraic index theorem}\label{2.1}

An example of an application of the above is the algebraic index theorem for a formal deformation of  a symplectic manifold $M$.
 Note that we can view $\AH$ as a complex concentrated in degree $0$ and with trivial differential. Then, using the notation of remark \ref{JF}, we find the quasi-isomorphism
\begin{equation*}J^\infty_F\colon (\AH,0)\longrightarrow \left(\Omega^\sbullet(M;\WW),\nabla_F\right).\end{equation*} Similarly we find the quasi-isomorphism
\begin{equation*}J_F^\infty\colon CC^{per}_\sbullet(\AH)\longrightarrow \Omega^\sbullet(M;CC^{per}_\sbullet(\WW)).\end{equation*}
For future reference let us record the following observation.

\begin{lemma}\label{O=O}
	The quasi-isomorphic inclusion $\mathbb{C}[\hbar^{-1},\hbar\rrbracket[u^{-1},u\rrbracket  \hookrightarrow \hat{\Omega}^{-\sbullet}[\hbar^{-1},\hbar\rrbracket[u^{-1},u\rrbracket$ induces
a quasi-isomorphism
\[
\iota \colon \left(\Omega^\sbullet(M)[\hbar^{-1},\hbar\rrbracket[u^{-1},u\rrbracket[2d], d_{\dR}\right) \longrightarrow \left(\Omega^\sbullet(M;\hat{\Omega}^{-\sbullet}[\hbar^{-1},\hbar\rrbracket[u^{-1},u\rrbracket[2d]), \nabla_{F}+\hat{d}\right).
\]
\end{lemma}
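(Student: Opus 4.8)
The plan is to reduce Lemma \ref{O=O} to a fiberwise statement about the Fedosov connection on the Weyl-type bundle with fiber $\hat{\Omega}^{-\sbullet}[\hbar^{-1},\hbar\rrbracket[u^{-1},u\rrbracket[2d]$, and then to a pointwise statement in the $(\g,\lsp(2d))$-picture. First I would observe that both sides of the claimed map are of the form $\Omega^\sbullet(M;\mathbb{M}^\sbullet)$ for suitable $(\g,\Sp(2d,\R))$-cochain complexes $\mathbb{M}^\sbullet$: the left side uses the trivial complex $\mathbb{M}^\sbullet = \C[\hbar^{-1},\hbar\rrbracket[u^{-1},u\rrbracket[2d]$ with zero differential (so that $\Omega^\sbullet(M;\mathbb{M}^\sbullet) = \Omega^\sbullet(M)[\hbar^{-1},\hbar\rrbracket[u^{-1},u\rrbracket[2d]$ with $d_{\dR}$), and the right side uses $\mathbb{M}^\sbullet = (\hat{\Omega}^{-\sbullet}[\hbar^{-1},\hbar\rrbracket[u^{-1},u\rrbracket[2d], \hat d)$ with the action of $\g$ by Hamiltonian vector fields taken modulo $\hbar$. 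The proposed map $\iota$ is exactly $\Omega^\sbullet(M;-)$ applied to the quasi-isomorphic inclusion of coefficient complexes $\C[\hbar^{-1},\hbar\rrbracket[u^{-1},u\rrbracket \hookrightarrow \hat{\Omega}^{-\sbullet}[\hbar^{-1},\hbar\rrbracket[u^{-1},u\rrbracket$ (constants into the formal de Rham complex), which is $\Sp(2d,\R)$-equivariant, compatible with the $\g$-action modulo $\hbar$, and indeed a quasi-isomorphism by the formal Poincar\'e lemma.

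The key step is then a general principle: if $\psi\colon (\mathbb{M}_1^\sbullet,\delta_1)\to(\mathbb{M}_2^\sbullet,\delta_2)$ is a quasi-isomorphism of $(\g,\Sp(2d,\R))$-cochain complexes, then the induced map $\Omega^\sbullet(M;\mathbb{M}_1^\sbullet)\to\Omega^\sbullet(M;\mathbb{M}_2^\sbullet)$ of total complexes (with differentials $\nabla_F+\delta_i$) is a quasi-isomorphism. I would prove this by a spectral sequence / filtration argument: filter both double complexes by the de Rham degree on $\FM$ (or equivalently by powers of the form-degree). Since the associated bundles $\mathcal{M}_i$ are flat bundles (resolved by $(\Omega^\sbullet(M;\mathbb{M}_i^\sbullet),\nabla_F)$), the $E_1$-page is computed by the fiberwise cohomology of $\delta_i$, i.e. by $\Omega^\sbullet(M;H^\sbullet(\mathbb{M}_i^\sbullet))$, and $\psi$ induces an isomorphism there because $\psi$ is a fiberwise quasi-isomorphism that is moreover $\Sp(2d,\R)$-equivariant (so the passage to $\Omega^\sbullet(M;-) = (\Omega^\sbullet(\FM)\otimes -)^{\Sps(2d)}_{\text{basic}}$ is exact here, $\Sp(2d,\R)$ being reductive / the relevant averaging being available). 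Comparison of the spectral sequences then gives the result by the Eilenberg--Moore / standard comparison theorem for filtered complexes, provided the filtrations are bounded or at least complete and exhaustive, which holds here since $M$ is finite-dimensional and the complexes are bounded in the relevant direction.

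Concretely, for the case at hand the fiberwise cohomology of $\hat d$ on $\hat{\Omega}^{-\sbullet}[\hbar^{-1},\hbar\rrbracket[u^{-1},u\rrbracket[2d]$ is just $\C[\hbar^{-1},\hbar\rrbracket[u^{-1},u\rrbracket[2d]$ concentrated in degree $2d$ (formal Poincar\'e lemma), the inclusion $\psi$ realizes precisely this isomorphism on cohomology, and so the $E_1$-terms of the two spectral sequences agree via $\iota$. Hence $\iota$ is a quasi-isomorphism.

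The main obstacle I expect is the bookkeeping around taking $\Sp(2d,\R)$-invariants (and the horizontality condition $\iota_X\eta=0$) commuting with passage to cohomology: one must check that the functor $\mathbb{L}\mapsto\Omega^\sbullet(M;\mathbb{L})$ is exact on the relevant category of $(\g,\Sp(2d,\R))$-modules, or at least that it carries the fiberwise quasi-isomorphism $\psi$ to a quasi-isomorphism. Because $\Sp(2d,\R)$ need not be compact, the averaging must be done at the level of the maximal compact $K$ (which suffices since $\Sp(2d,\R)/K$ is contractible and the $(\g,K)$- and $(\g,\Sp(2d,\R))$-module structures carry the same cohomological information, as is used throughout the paper), and one must ensure the grading shift $[2d]$ and the Laurent coefficients $[\hbar^{-1},\hbar\rrbracket[u^{-1},u\rrbracket$ do not destroy completeness of the filtration. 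All of this is routine given the machinery already set up in Section \ref{section2} and the appendix, so once the exactness/invariance point is settled the lemma follows formally.
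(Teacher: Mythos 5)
The paper offers no proof of Lemma \ref{O=O} at all: it is stated as an immediate consequence of the formal Poincar\'e lemma, so your write-up is a fleshing-out of the standard argument rather than a divergence from the paper. Your reduction and the spectral-sequence comparison (filter by the de Rham degree, bounded filtration since $\hat{\Omega}^{-\sbullet}$ sits in finitely many degrees and $M$ is finite dimensional, $E_1$ computed by the fibrewise $\hat{d}$-cohomology) is correct in outline. The one soft spot is the paragraph about exactness of $\mathbb{L}\mapsto\Omega^\sbullet(M;\mathbb{L})$: there is no invariant averaging over the noncompact group $\Sp(2d,\R)$, and the appeal to the maximal compact $K$ and contractibility of $\Sp(2d,\R)/K$ is not the mechanism that makes the $E_1$ comparison work. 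What does make it work, and lets you bypass the general exactness question entirely, is that the coefficient bundle here is associated to the $\Sp(2d,\R)$-principal bundle $\FM$ with the \emph{linear} action on $\hat{\Omega}^{-\sbullet}$ (the full $\g$-action enters only through $\nabla_F$, which is invisible on the $E_0$ page), and the standard Euler-vector-field homotopy $h$ realizing the formal Poincar\'e lemma, $\hat{d}h+h\hat{d}=\mathrm{id}-\Pi_0$, is $\GL(2d)$- and hence $\Sp(2d,\R)$-equivariant and $\C[\hbar^{-1},\hbar\rrbracket[u^{-1},u\rrbracket$-linear. It therefore glues to a globally defined fibrewise contraction of $\Omega^\sbullet(M;\hat{\Omega}^{-\sbullet}[\hbar^{-1},\hbar\rrbracket[u^{-1},u\rrbracket[2d])$ onto the subcomplex of constant-coefficient forms, which is exactly the image of $\iota$; this gives the isomorphism on $E_1$ (indeed a deformation retract in the fibre direction) with no averaging and no appeal to reductivity. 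With that substitution your proof is complete; the remaining quibble is only the bookkeeping remark that, with the paper's convention $(V^{\sbullet}[k])^p=V^{p+k}$, the fibrewise cohomology is the constants placed by the shift $[2d]$ (i.e.\ in degree $-2d$), not ``degree $2d$'', which does not affect the argument since both sides of $\iota$ carry the same shift.
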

\begin{notation}
We denote the inverse (up to homotopy) of $\iota$ by
	\begin{equation*}T_0\colon\left(\Omega^\sbullet(M;\hat{\Omega}^{-\sbullet}[\hbar^{-1},\hbar\rrbracket[u^{-1},u\rrbracket[2d]), \nabla_{F}+\hat{d}\right)\longrightarrow
	\left(\Omega^\sbullet(M)[\hbar^{-1},\hbar\rrbracket[u^{-1},u\rrbracket[2d], d_{\dR}\right).\end{equation*}

\end{notation}

For each $Q\in \Omega^\sbullet(M;\mathbb{L}^\sbullet)$ of total degree zero, we can define the map

\begin{multline*}C_Q\colon CC^{per}_0(\AH)\longrightarrow \Omega^\sbullet(M;CC^{per}_\sbullet(\WW))\stackrel{Q}{\longrightarrow}
 \Omega^\sbullet(M;\hat{\Omega}^{-*}[\hbar^{-1},\hbar\rrbracket[u^{-1},u\rrbracket[2d])\stackrel{T_0}\longrightarrow \\
   \Omega^{\sbullet-*}(M;\C)[\hbar^{-1},\hbar\rrbracket[u^{-1},u\rrbracket[2d]
 \stackrel{u^{-d}\int_M}{\longrightarrow}\C[\hbar^{-1},\hbar\rrbracket.
\end{multline*}
 Clearly $C_Q$ is a periodic cyclic cocycle if $Q$ is a cocycle. We will apply this construction to the two cocycles $\hat{\tau}_t$ and $\hat{\tau}_a$.

Let us start with $C_{\hat{\tau}_t}$.
Tracing the definitions we get the following result.
\begin{proposition}
$C_{\hat{\tau}_t}$ is given by 
\[
u^n w_0\otimes\ldots\otimes w_{2n}\mapsto  \frac{u^{n-d}}{(2n)!}\int_M \sigma(w_0){d}\sigma(w_1)\wedge\ldots\wedge {d}\sigma(w_{2n}).
\]
\end{proposition}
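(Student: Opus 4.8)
My plan is to unwind the definition of $C_{\hat{\tau}_t}$ — the composition of $J^\infty_F$, the fibrewise application $GF(\hat{\tau}_t)$ of the map $J\circ HKR\circ\sigma$, the contraction $T_0$, and the normalisation $u^{-d}\!\int_M$ — evaluated on a periodic cyclic chain $u^n\,w_0\otimes\cdots\otimes w_{2n}\in CC^{per}_0(\AH)$. The mechanism is that the two non-elementary ingredients, the Fedosov quasi-isomorphism $J^\infty_F$ and the contraction $T_0$, are mutually inverse incarnations of jet expansion along the diagonal, so that once they cancel only the elementary pieces remain.

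Concretely, I would first check that $\sigma\circ J^\infty_F$ is the classical jet expansion of the principal symbol: reducing the Fedosov connection $\nabla_F$ and the flat section $J^\infty_F$ modulo $\hbar$ turns $\WW$ into $\mathbb{O}$, $\nabla_F$ into the classical Fedosov connection on the bundle of $\infty$-jets of functions, and $J^\infty_F$ into Taylor expansion; hence for $w\in\AH$ the section $\sigma(J^\infty_F(w))$ of the $\mathbb{O}$-bundle is $m\mapsto j^\infty_m(\sigma w)$, where $\sigma\colon\AH\to C^\infty(M)$ is the global symbol and $j^\infty_m$ denotes the $\infty$-jet at $m$. Since the vertical differential $\hat d$ on the jet bundle is the jet of the de Rham differential — $\hat d\,j^\infty_m(f)=j^\infty_m(df)$, the compatibility underlying Lemma~\ref{O=O} — and jet expansion is multiplicative, applying the fibrewise $HKR$ to $j^\infty_m(\sigma w_0)\otimes\cdots\otimes j^\infty_m(\sigma w_{2n})$ produces the total jet expansion $m\mapsto j^\infty_m(\gamma)$ of the genuine $2n$-form $\gamma:=\tfrac{1}{(2n)!}\sigma(w_0)\,d\sigma(w_1)\wedge\cdots\wedge d\sigma(w_{2n})$ on $M$, realised (as the $HKR$ formula dictates) with all of its degree sitting in the jet variables. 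This total jet expansion is a quasi-isomorphism of $(\Omega^\sbullet(M),d)$ onto the $(\nabla_F+\hat d)$-complex of Lemma~\ref{O=O}, inducing the same map on cohomology as the inclusion $\iota$ of that lemma; hence $T_0$, a homotopy inverse of $\iota$, sends $m\mapsto j^\infty_m(\gamma)$ to $\gamma$ modulo a term $d(h\gamma)+h(d\gamma)$ for some homotopy $h$.

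Finally, I would apply $u^{-d}\!\int_M$. The term $d(h\gamma)$ integrates to zero by Stokes, and $h(d\gamma)$ contributes nothing either, since $\int_M$ only detects forms of degree $2d$, and when $2n=2d$ the form $d\gamma$ has degree $2d+1$ on the $2d$-dimensional $M$ and so vanishes. Keeping track of the powers of $u$ — which, the intervening maps being $\C[u^{-1},u\rrbracket$-linear, pass straight through and pick up the normalising factor — one arrives at the asserted value $\frac{u^{n-d}}{(2n)!}\int_M\sigma(w_0)\,d\sigma(w_1)\wedge\cdots\wedge d\sigma(w_{2n})$. The step that will require the most care is the one in the previous paragraph: verifying that $\sigma$ followed by $HKR$, applied in the fibres of the Fedosov construction, reproduces — once $T_0$ undoes the jet expansion — the classical ``symbol followed by $HKR$'' construction on $M$, with the correct handling of the degree shift $[2d]$ and the powers of $u$. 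This is the incarnation over $M$ of the up-to-homotopy commutativity of the diagram in Theorem~\ref{th:classes}, transported by the Gelfand--Fuks map, together with Lemma~\ref{O=O}; granting it, the rest is the routine unwinding indicated above.
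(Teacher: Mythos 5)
Your proposal is correct and follows essentially the paper's own route: the paper gives no argument beyond ``tracing the definitions,'' and your tracing --- $\sigma\circ J^\infty_F$ as jet prolongation of the symbol, fibrewise $HKR$ producing the jet expansion of $\frac{1}{(2n)!}\sigma(w_0)\,d\sigma(w_1)\wedge\cdots\wedge d\sigma(w_{2n})$, and $T_0$ undoing the jet expansion up to homotopy terms killed by Stokes and by degree reasons under $u^{-d}\int_M$ --- is exactly the intended unwinding of $C_{\hat{\tau}_t}$.
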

To get the corresponding result for $C_{\hat{\tau}_a}$ recall first that the algebra $\AH(M)$ has a unique $\C\hhbar$-linear trace, up to a normalisation factor. This factor can be fixed as follows.
Locally any deformation of a symplectic manifold is isomorphic to the Weyl deformation. Let $U$ be such a coordinate chart and let $\phi \colon \AH(U) \to \AH(\R^{2d})$ be an isomorphism. Then  the trace $Tr$ is normalized by requiring that for any $f \in \AHc(U) $ we have
$$
Tr(f)=\frac{1}{(i\hbar)^d}\int_{\R^{2d}}\phi(f)\frac{\omega^d}{d!}.
$$
\begin{proposition} We have
$$
C_{\hat{\tau}_a}=Tr.
$$
\end{proposition}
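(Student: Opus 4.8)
The plan is to recognize $C_{\hat{\tau}_a}$ as a $\C\hhbar$-linear trace on $\AHc(M)$ and then to fix its normalization constant by a computation in the local Darboux model. For the trace property, note first that by the Lemma preceding the Proposition $\hat{\tau}_a$ is a cocycle of total degree $0$ in $\bigl(C^\sbullet_{Lie}(\g,\Sp(2d,\R);\mathbb{L}^\sbullet),\partial_{Lie}+\partial_{\mathbb{L}}\bigr)$, so $GF(\hat{\tau}_a)$ is a total-degree-$0$ cocycle in $\bigl(\Omega^\sbullet(M;\mathbb{L}^\sbullet),\nabla_F,\delta\bigr)$ and $C_{\hat{\tau}_a}=C_{GF(\hat{\tau}_a)}$ is a periodic cyclic cocycle, as remarked just above. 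Since $J^\infty_F$ is a morphism of algebras and of cyclic complexes, $J^\infty_F(a\star b-b\star a)=b\bigl(J^\infty_F(a)\otimes J^\infty_F(b)\bigr)$ with $J^\infty_F(a)\otimes J^\infty_F(b)$ a $\nabla_F$-flat, $CC^{per}_1(\WW)$-valued $0$-form; feeding $(\nabla_F+b+uB)$ of this element into the cocycle $GF(\hat{\tau}_a)$ and using that $u^{-d}\int_M\circ\,T_0$ annihilates $\nabla_F$- and $\hat d$-exact forms, a standard computation (see \cite{RRII}, \cite{Fb}) yields $C_{\hat{\tau}_a}(a\star b-b\star a)=0$. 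Since $\C\hhbar$-linearity is clear from the construction, $C_{\hat{\tau}_a}$ restricts to a $\C\hhbar$-linear trace on $\AHc(M)$.

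By the uniqueness (up to normalization) of such a trace, recalled in the Definition and proved in \cite{Fb}, we obtain $C_{\hat{\tau}_a}=c\cdot Tr$ for a single constant $c\in\C[\hbar^{-1},\hbar\rrbracket$. To see that $c=1$ it suffices, by locality of traces, to evaluate both sides on an $f\in\AHc(U)$ supported in a Darboux chart $U$ carrying an isomorphism $\AH(U)\simeq\AH(\R^{2d})$ onto the Moyal--Weyl deformation. Over such a $U$ the bundles $\FM$ and $\tMr$ are trivial; by Example \ref{loctriv} the Fedosov connection is the explicit flat connection with $A_F(\partial_{x^i})=-\partial_{\hat{x}^i}$ and $A_F(\partial_{\xi^i})=-\partial_{\hat{\xi}^i}$, so that the flat Weyl section $J^\infty_F(f)$ is the fiberwise Taylor expansion $f(x+\hat{x},\xi+\hat{\xi})$. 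Keeping in $GF(\hat{\tau}_a)\bigl(J^\infty_F(f)\bigr)$ only the component of form-degree $2d$ (the only one surviving $u^{-d}\int_M$), the computation reduces to pairing the top-form component of a cocycle representing $\hat{\tau}_a$ with the constant vector fields $\partial_{x^1},\dots,\partial_{\xi^d}$ --- i.e. with $2d$ copies of the degree $-1$ derivations $A_F(\partial_{x^i}),A_F(\partial_{\xi^i})\in\g_{-1}$ --- evaluated on the jet $f(x+\hat{x},\xi+\hat{\xi})$, then applying $T_0$ (which on cohomology amounts to restricting the Weyl jet to the basepoint $\hat{x}=\hat{\xi}=0$) and integrating over $U$.

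Since the degree-$0$ component of a cocycle representing $\hat{\tau}_a$ is $\mu^\hbar\circ\iota$, and in the constant-coefficient local model the top-form component is obtained from it by descent along $\partial_{Lie}+\partial_{\mathbb{L}}$, the calculation is controlled by the normalization of $\mu^\hbar$, namely that it sends the Hochschild $2d$-chain $\varphi=1\otimes\alt(\hat{\xi}_1\otimes\hat{x}_1\otimes\cdots\otimes\hat{\xi}_d\otimes\hat{x}_d)$ to the $0$-form $1$. Tracking the combinatorics shows that the density produced for $f$ is $\tfrac{1}{(i\hbar)^d}\,f(x,\xi)\,\tfrac{\omega^d}{d!}$, which is precisely the expression defining $Tr$; hence $c=1$. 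The main obstacle is exactly this bookkeeping --- keeping track of the factor $(i\hbar)^{-d}$, of the combinatorial constants entering $\mu^\hbar$ and $\omega^d/d!$, of the signs in $\alt$ and in the descent, and of the powers of $u$ --- together with verifying both that only the top-form component of $GF(\hat{\tau}_a)$ contributes (the lower ones produce forms of degree $<2d$, which integrate to $0$, and there are none of higher degree on $\R^{2d}$) and that $T_0$ indeed reduces, modulo exact forms, to evaluation of the Weyl jet at the basepoint. Alternatively, this last step can be bypassed by invoking the analytic description of $\hat{\tau}_a$ from the Introduction (cf. Example \ref{JF} and \cite{RRII}): for $M=T^*X$ one has $\int_M GF(\hat{\tau}_a)(\sigma(p)-\sigma(q))=Tr(p-q)$ with $Tr$ the operator trace, whose restriction to compactly supported complete symbols is the normalized trace above; applied with $X=\R^d$, this combined with the first paragraph again forces $c=1$.
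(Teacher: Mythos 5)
Your overall strategy is the same as the paper's: show that $C_{\hat{\tau}_a}$ is a degree-zero cocycle and hence a trace, invoke uniqueness of the $\C\hhbar$-linear trace to write $C_{\hat{\tau}_a}=c\cdot Tr$, and fix $c=1$ by a local computation in the Moyal--Weyl model controlled by the normalization of $\mu^\hbar$ on the chain $\varphi$; the bookkeeping you describe is exactly what the paper summarizes as ``$J^\infty_F(f)$ is cohomologous to $\frac{1}{(i\hbar)^d d!}\,f\,\varphi\,\omega^d$''.

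There is, however, one genuine missing step. When you compute $c$ on an $f$ supported in a Darboux chart $U$, you assert that ``by Example \ref{loctriv} the Fedosov connection is the explicit flat connection'' there. That is not automatic: $C_{\hat{\tau}_a}$ is built from the globally chosen section $F$, and its restriction to $U$ is a Fedosov connection that is gauge-equivalent to, but in general not equal to, the standard one of Example \ref{loctriv}. As written, your computation fixes the constant only for the model $(\R^{2d},F_{\mathrm{std}})$, not for the given pair $(M,F)$. The paper closes precisely this gap: a change of the section $F$ changes $GF$ only by a chain homotopy, so the Hochschild cohomology class of $C_{\hat{\tau}_a}$ is independent of the Fedosov connection, and since there are no cochains of degree $-1$ the cocycle $C_{\hat{\tau}_a}$ itself is independent of it; only then is it legitimate to verify the statement for $\R^{2d}$ with the standard connection. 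Your proposed fallback via the analytic statement for $M=T^*X$ quoted in the introduction does not repair this (it would still have to be transported to a general $(M,F)$, which again needs connection-independence), and within this paper it essentially imports the normalization being proved here. With the connection-independence argument inserted, the rest of your proof matches the paper's.
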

\begin{proof}[Comments about the proof.]

\noindent
First one checks that $C_{\hat{\tau}_a}$ is a $0$-cocycle and therefore a trace.
Hence it is a $\C \hhbar$-multiple of $Tr$ and it is sufficient to evaluate it
on elements supported in a coordinate chart. Moreover,  the fact that the Hochschild cohomology class of $C_{\hat{\tau}_a}$ is independent of the Fedosov connection implies that $C_{\hat{\tau}_a}$ is independent of it. Thus it is sufficient to verify the statement for $\R^{2d}$ with the standard Fedosov connection. Let $f \in \AHc(\R^{2d})$,
one checks that $J^\infty_F(f) \in \Omega^0(\R^{2d}; C_0(\WW))$ is cohomologous to the
element $\frac{1}{(i\hbar)^d d!} f\, \varphi\, \omega^d \in \Omega_c^{2d}(\R^{2d}; C_{2d}(\WW))$ in $\Omega_c^{\sbullet}(\R^{2d}; C_{-\sbullet}(\WW))$ .
It follows that the  $GF(\hat{\tau}_a) J^\infty_F(f)$ is cohomologous to $\frac{1}{(i\hbar)^d d!} f \, \varphi\, \omega^d$ (see the theorem \ref{th:classes}) and therefore
\[
C_{\hat{\tau}_a}(f) = Tr(f)=\frac{1}{(i\hbar)^d}\int_{\R^{2d}}f \frac{\omega^d}{d!}
\]
and the statement follows.

\end{proof}

Given above identifications of $C_{\hat{\tau}_a}$ and $C_{\hat{\tau}_t}$, the theorem \ref{UAIT} implies the following result.

\begin{theorem}[Algebraic Index Theorem]\label{thm:AIT}
 Suppose $a\in CC^{per}_0\left(\AHc\right)$ is a cycle, then
 \begin{equation*}
 Tr(a)=u^{-d}\int_M \sum_{p\geq 0}HKR(\sigma(a))\left(\hat{A}(T_\C M)e^{\theta}\right)_{2p}u^p.
 \end{equation*}
 \end{theorem}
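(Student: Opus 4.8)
The plan is to apply the assignment $Q\mapsto C_Q$ to both sides of the Lie algebraic index theorem (Theorem \ref{UAIT}) and then read off each side using the two propositions just established, $C_{\hat{\tau}_a}=Tr$ and the explicit formula for $C_{\hat{\tau}_t}$.

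First I would isolate the formal properties of $Q\mapsto C_Q$ that make this manipulation legitimate. Since $C_Q$ is the composite of the quasi-isomorphism $J^\infty_F$ (Example \ref{JF}), the pairing with $Q$, the homotopy inverse $T_0$ of the inclusion $\iota$ of Lemma \ref{O=O}, and $u^{-d}\int_M$, it is additive and $\C[\hbar^{-1},\hbar\rrbracket[u^{-1},u\rrbracket$-linear in $Q$ (wherever the total degree is zero), and if $Q$ is replaced by a cohomologous cocycle in $(\Omega^\sbullet(M;\mathbb{L}^\sbullet),\nabla_F+\partial_{\mathbb{L}})$ then $C_Q$ changes by a coboundary of periodic cyclic cochains on $\AHc$; hence the number $C_Q(a)$ depends only on the hypercohomology class of $Q$ once $a$ is a cycle. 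Writing $\Theta_Q(a):=T_0\bigl(Q(J^\infty_F(a))\bigr)\in\Omega^\sbullet(M)[\hbar^{-1},\hbar\rrbracket[u^{-1},u\rrbracket[2d]$, so that $C_Q(a)=u^{-d}\int_M\Theta_Q(a)$, I would also record a projection formula: $\iota$ is a strict morphism of differential graded $(\Omega^\sbullet(M),d_{\dR})$-modules, so $T_0$ is one up to homotopy, and therefore for a closed form $\alpha\in\Omega^\sbullet(M)$ and a cycle $a$ the forms $\Theta_{\alpha\wedge Q}(a)$ and $\alpha\wedge\Theta_Q(a)$ differ by a $d_{\dR}$-exact form, giving $C_{\alpha\wedge Q}(a)=u^{-d}\int_M\alpha\wedge\Theta_Q(a)$. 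Finally I would use that $GF$ is multiplicative, so that $GF(\hat{A}_fe^{\hat{\theta}})=\hat{A}(T_\C M)e^{\theta}$ from the identities $GF(\hat{A}_f)=\hat{A}(TM)$ and $GF(\hat{\theta})=\theta$ in the examples above.

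With these in hand, I would apply $C_{(-)}$ to the identity $\hat{\tau}_a=\sum_{p\ge0}[\hat{A}_fe^{\hat{\theta}}]_{2p}u^p\,\hat{\tau}_t$ of Theorem \ref{UAIT}, interpreted after applying $GF$ as an equality of classes in the hypercohomology of $(\Omega^\sbullet(M;\mathbb{L}^\sbullet),\nabla_F+\partial_{\mathbb{L}})$. On the left, $C_{\hat{\tau}_a}(a)=Tr(a)$ by the proposition identifying $C_{\hat{\tau}_a}$ with $Tr$. On the right, linearity in $Q$ together with the projection formula applied to the closed forms $(\hat{A}(T_\C M)e^{\theta})_{2p}$ reduce everything to $\Theta_{\hat{\tau}_t}(a)$; and the ``tracing the definitions'' computation behind the proposition on $C_{\hat{\tau}_t}$ shows that $\Theta_{\hat{\tau}_t}(a)$ is cohomologous to the de Rham form $HKR(\sigma(a))$. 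Hence $C_{[\hat{A}_fe^{\hat{\theta}}]_{2p}u^p\hat{\tau}_t}(a)=u^{-d}\int_M HKR(\sigma(a))\,(\hat{A}(T_\C M)e^{\theta})_{2p}\,u^p$, with the sum over $p$ finite because $(\hat{A}(T_\C M)e^{\theta})_{2p}=0$ for $2p>2d$. Equating the two sides gives the stated formula.

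I expect the main obstacle to be the careful justification of the projection formula, i.e. that the homotopy inverse $T_0$ of $\iota$ respects the $(\Omega^\sbullet(M),d_{\dR})$-module structure up to coherent homotopy, so that the closed forms representing $\hat{A}(T_\C M)e^{\theta}$ can be pulled out of $T_0$ and of $\int_M$ without changing the value on a cycle; the remaining bookkeeping with Theorem \ref{UAIT}, the two propositions and the multiplicativity of $GF$ is routine. An economical alternative avoiding the general module statement is to reprove directly, by the same local computation used for $C_{\hat{\tau}_t}$ (and the explicit Fedosov connection of Example \ref{loctriv}), the single identity $C_{Q\cdot\hat{\tau}_t}(a)=u^{-d}\int_M GF(Q)\wedge HKR(\sigma(a))$ for every de Rham cocycle $Q$, which is all that is actually needed.
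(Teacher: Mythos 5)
Your proposal is correct and follows essentially the same route as the paper, which obtains Theorem \ref{thm:AIT} precisely by applying the construction $Q\mapsto C_Q$ to the identity of Theorem \ref{UAIT} and invoking the two propositions $C_{\hat{\tau}_a}=Tr$ and the explicit formula for $C_{\hat{\tau}_t}$. The only difference is that you make explicit the projection-formula/multiplicativity bookkeeping (compatibility of $T_0$ with wedging by closed de Rham forms and $GF(\hat{A}_fe^{\hat{\theta}})=\hat{A}(T_\C M)e^{\theta}$) that the paper leaves implicit in its one-line deduction.
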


\section{Equivariant Gelfand-Fuks map}\label{3}

Suppose $\Gamma$ is a discrete group acting by automorphisms on $\AH$. Then we can extend the action to $\tMr$ as follows. First note that the action on
$\AH$ induces an action on $\left(M,\omega\right)$. Now suppose
$\left(m,\phi_m\right)\in\tMr$ and $\gamma\in\Gamma$, then let $\gamma\left(m,\phi_m\right)=\left(\gamma\left(m\right),\phi_m^\gamma\right)$,
here $\phi_m^\gamma$ is given by \begin{equation*}\widehat{\AH\left(M\right)}_{\gamma\left(m\right)}\longrightarrow\widehat{\AH\left(M\right)}_m\longrightarrow\mathbb{W},\end{equation*} where the first
arrow is given by the action of $\Gamma$ on $\AH$ and the second arrow is given by $\phi_m$. Note that, since $\tGr$ acts on $\tMr$ by
postcomposition and $\Gamma$ acts on $\tMr$ by precomposition, we find that the two actions commute.

It should be apparent from the preceding section that we will need an equivariant version of the Gelfand-Fuks map in the deformed setting. This will allow us to derive
the
equivariant algebraic index theorem from the Lie algebraic one. To do this we will extend the definition of the one-form $A_F$ to the Borel construction
$E\Gamma\times_\Gamma M$. Explicitly this
is done by defining connection one-forms $A_{Fk}$ on the manifolds $\Delta^k\times\Gamma^k\times\FM$ which satisfy certain boundary conditions. This connection
one-form will then serve the usual purpose in the Gelfand-Fuks map, only now the range of the map will be
a model for the equivariant cohomology of the manifold.

Assume that $\Gamma$ is a discrete group acting on a manifold $X$. Set $X_k: = X \times \Gamma^k $. Define the face maps $\partial_i^k \colon X_k \to X_{k-1}$ by
\[
 \partial_i^k(x, \gamma_1,\ldots,\gamma_k)=\begin{cases}(\gamma^{-1}_1(x), \gamma_2,\ldots,\gamma_k)&\text{if } i=0\\
                                  (x, \gamma_1,\ldots, \gamma_i\gamma_{i+1},\ldots,\gamma_k)& \text{if } 0<i< k\\
                                  (x,\gamma_1,\ldots, \gamma_{k-1})&\text{if } i=k\\
                                 \end{cases}
\]
 We denote the standard $k$-simplex by \begin{equation*}\Delta^k:=\left\{(t_0,\ldots,t_k)\geq 0|\sum_{i=0}^kt_i=1\right\}\subset \R^{k+1}\end{equation*} and define by
 $\epsilon_i^k\colon \Delta^{k-1}\to \Delta^k$
 \begin{equation*}\epsilon_i^k(t_0,\ldots,t_{k-1})=\begin{cases}(0,t_0,\ldots,t_{k-1})&\text{if } i=0\\
                                  (t_0,\ldots, t_{i-1},0,t_i,\ldots ,t_{k-1})& \text{if } 0<i\leq k
                                 \end{cases}\end{equation*}
\begin{definition} A \emph{de Rham-Sullivan}, or \emph{compatible} form $\phi$ of degree $p$ is a collection of forms $\phi_k \in \Omega^p(\Delta^k \times X_k )$, $k=0, 1, \ldots$, satisfying
 \begin{equation}
\label{eq:compatibility conditions} (\epsilon_i^k\times \id )^*\phi_k=(\id \times \partial_i^k)^*\phi_{k-1} \in \Omega^p(\Delta^{k-1}\times X_k)\end{equation} for $0\le i\le k$ and any $k>0$.
 \end{definition}
If  $\phi = \{\phi_k\}$ is a compatible form, then $d \phi := \{d \phi_k\}$ is also a compatible form;
for two compatible forms $\phi = \{\phi_k\}$ and $\psi = \{\psi_k\}$ their product $\phi\psi :=\{\phi_k \wedge\psi_k\}$ is another compatible form.
We denote the space of de Rham-Sullivan forms by $\Omega^{\sbullet}(M\times_\Gamma E\Gamma)$ in view of
 the following
\begin{theorem}\label{Borel=Borel}
We have
 \begin{equation*}\CH^\sbullet \left( \Omega^{\sbullet}(M\times_\Gamma E\Gamma), d \right) \simeq \CH^\sbullet_\Gamma(M) \end{equation*}
 where the left hand side is the cohomology of  the complex $\Omega^{\sbullet}(M\times_\Gamma E\Gamma)$ and  the right hand side
 is the cohomology of the Borel construction $M\times_\Gamma E\Gamma$.
\end{theorem}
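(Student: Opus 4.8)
The plan is to recognise $\{M\times\Gamma^k\}$ as the nerve of the transformation groupoid $M\rtimes\Gamma$ and then run the standard simplicial--de~Rham comparison. First I would observe that $X_\bullet:=\{X_k=M\times\Gamma^k\}$, equipped with the given face maps $\partial_i^k$ and the evident degeneracies inserting the unit $e$, is precisely the nerve $N_\bullet(M\rtimes\Gamma)$; since $\Gamma$ is discrete each $X_k$ is a disjoint union of copies of $M$ indexed by $\Gamma^k$ and the degeneracies are inclusions of connected components, so $X_\bullet$ is a good simplicial manifold and its (fat) geometric realisation is homotopy equivalent to the Borel construction $M\times_\Gamma E\Gamma$. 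Thus $\CH^\sbullet_\Gamma(M)$ is the real cohomology of $|X_\bullet|$, and the theorem becomes the assertion that de Rham--Sullivan forms on $X_\bullet$ compute it.

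Next I would introduce the auxiliary bicomplex $B^{p,q}:=\Omega^q(X_p)=\Omega^q(M\times\Gamma^p)$, which is the inhomogeneous bar complex $C^p(\Gamma;\Omega^q(M))$ of $\Gamma$ with coefficients in the $\Gamma$-module of $q$-forms on $M$, with vertical differential $d_\dR$ and horizontal differential $\delta=\sum_i(-1)^i(\partial_i)^*$. Running the spectral sequence of $\Tot(B)$ that first takes $d_\dR$-cohomology gives $E_1^{p,q}=C^p(\Gamma;\CH^q_\dR(M))$ and $E_2^{p,q}=\CH^p(\Gamma;\CH^q_\dR(M))$ --- exactly the Serre spectral sequence of $M\hookrightarrow M\times_\Gamma E\Gamma\to B\Gamma$ --- so $\CH^\sbullet(\Tot B)\cong\CH^\sbullet_\Gamma(M)$. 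Equivalently, $\Tot B=\Hom_\Gamma(C_\sbullet(E\Gamma),\Omega^\sbullet(M))$ and $\Omega^\sbullet(M)$ is a $\Gamma$-equivariant resolution of the trivial module $\R$, so $\Tot B$ computes the $\Gamma$-hypercohomology of $\R$.

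The remaining step is to connect $\Tot(B)$ with the de Rham--Sullivan complex, for which I would use the Bott--Shulman--Dupont integration map $I\colon\Omega^\sbullet(M\times_\Gamma E\Gamma)\to\Tot(B)$ sending a compatible form $\phi=\{\phi_k\}$ to the collection whose $k$-th component is the fibre integral $\int_{\Delta^k}\phi_k\in\Omega^{\sbullet-k}(X_k)=B^{k,\sbullet-k}$ over the simplex factor (only the part of $\phi_k$ of top degree $k$ along $\Delta^k$ contributes). The generalised Stokes formula together with the compatibility conditions \eqref{eq:compatibility conditions} --- the boundary contributions from $\partial\Delta^k$ reassembling, via the coface identities, into the simplicial differential $\delta$ --- shows $I$ is a morphism of complexes, and Dupont's simplicial--de~Rham theorem says $I$ is a quasi-isomorphism; combining this with the previous paragraph proves the theorem. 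Since the appendix is meant for exactly such standard comparisons, I would record $I$ there and carry out the verification that it is a quasi-isomorphism.

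The hard part will be precisely that last point. I would prove it by filtering both the de Rham--Sullivan complex and $\Tot(B)$ by simplicial degree $k$ and comparing associated gradeds: on the de Rham--Sullivan side the compatibility relations against the lower cofaces $\epsilon_i^k$ force the degree-$k$ stratum to consist of forms on $\Delta^k\times X_k$ vanishing (after pullback) on all faces of $\partial\Delta^k$, and the relative Poincaré lemma for the pair $(\Delta^k,\partial\Delta^k)$ collapses the simplex direction, producing $\Omega^\sbullet(X_k)$ shifted up by $k$, which is $B^{k,\sbullet}$; on $E_1$ the map $I$ is then the identity, and the comparison theorem for spectral sequences finishes it. This relative Poincaré lemma, together with the bookkeeping of which faces have already been absorbed into lower strata, is the only genuine computation; everything else --- including the classical identification $|N_\bullet(M\rtimes\Gamma)|\simeq M\times_\Gamma E\Gamma$, which for discrete $\Gamma$ follows from the goodness remark above --- is formal.
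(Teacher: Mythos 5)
Your proposal is correct and takes essentially the same route as the paper, which gives no argument of its own but defers to Dupont \cite{Dupont}: your identification of $\{M\times\Gamma^k\}$ with the nerve of the action groupoid, the fat realization $\simeq M\times_\Gamma E\Gamma$, and the integration quasi-isomorphism onto the bicomplex $\Omega^q(M\times\Gamma^p)$ is precisely the content of the cited simplicial--de Rham theorem. (One small slip that your argument does not actually need: $\Omega^{\sbullet}(M)$ is a resolution of the constant \emph{sheaf} $\R$, not of the trivial $\Gamma$-module $\R$ unless $M$ is acyclic; the Cartan--Leray/hypercohomology identification you invoke is the correct statement.)
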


 \noindent See for instance \cite{Dupont} for the proof.

More generally, let $V$ be a $\Gamma$-equivariant bundle on $X$. Let $\pi_k \colon X_k \to X$ be the projection and let $V_k: = \pi_k^* V$. Notice that we have canonical isomorphisms
\begin{equation}\label{isoV}
(\partial_i^k)^* V_{k-1} \cong V_k.
\end{equation}
\begin{definition}
Let $V$ be  $\Gamma$-equivariant vector bundle.
A $V$ valued de Rham-Sullivan (compatible) form $\phi$ is a collection  $\phi_k \in \Omega^p(\Delta^k \times X_k; V_k )$, $k=0, 1, \ldots$, satisfying the conditions \eqref{eq:compatibility conditions}, where we use the isomorphisms \eqref{isoV} to identify $(\partial_i^k)^* V_{k-1}$ with $V_k$.

We let $\Omega^{\sbullet}(M\times_\Gamma E\Gamma; V)$ denote the space of $V$-valued de Rham-Sullivan (compatible) forms.
\end{definition}
For equivariant vector bundles $V$ and $W$ there is a product
\[ \Omega^{\sbullet}(M\times_\Gamma E\Gamma; V) \otimes  \Omega^{\sbullet}(M\times_\Gamma E\Gamma; W) \longrightarrow \Omega^{\sbullet}(M\times_\Gamma E\Gamma; V\otimes W)\]
defined as for the scalar forms by $\phi\psi :=\{\phi_k \wedge\psi_k\}$.

Assume that we have a collection of connections $\nabla_k$ on the bundles $V_k$ satisfying the compatibility conditions
\begin{equation}\label{connectioncompatibility}
 (\epsilon_i^k\times \id )^*\nabla_k=(\id \times \partial_i^k)^*\nabla_{k-1}.
\end{equation}
Then for a compatible form $\phi = \{\phi_k\}$
\[
\nabla \phi:= \{ \nabla_k \phi_k\}
\]
is again a compatible form.

\begin{notation}
 Now let $M$ be a symplectic manifold and $\Gamma$ a discrete group acting by symplectomorphisms on $M$.
We introduce the following notations:
 \begin{equation*}P^k_\Gamma:=\Delta^k\times(\FM)_k=\Delta^k\times\FM\times\Gamma^k, \end{equation*}
 and similarly
 \begin{equation*}M^k_\Gamma:=\Delta^k\times M_k=\Delta^k\times M\times\Gamma^k\end{equation*} and
 \begin{equation*}\tM^k_\Gamma:=\Delta^k\times (M_r)_k=\Delta^k\times \tMr\times \Gamma^k.\end{equation*} Note that $P^k_\Gamma\rightarrow M^k_\Gamma$ is a principal $\Sp(2d)$-bundle, namely the pull-back of $\FM\rightarrow M$ via the
 obvious projection. Similarly $\tM^k_\Gamma$ is the pull-back of $\tMr\rightarrow M$. We define
 $\Omega^\sbullet(M^k_\Gamma; \mathbb{L})$ for a $(\g,\Sp(2d))$-module $\mathbb{L}$ as we did for $M$ above only replacing $\FM$ by $P^k_\Gamma$ and considering
 the trivial action of the symplectic group on $\Delta^k\times\Gamma^k$. We shall denote the $\tG_1$-principal bundle $\tMr\rightarrow \FM$ by $\pi_1$,
 the $\tGr$-principal bundle $\tMr\rightarrow M$ by $\pi_r$ and the $\Sp(2d)$-principal bundle $\FM\rightarrow M$ by $\pi$.
 \end{notation}

Note that, for all $i$ and $k$, the (obvious) fibration of $\Delta^{k-1}\times(\tM_r)_k$ over $\Delta^{k-1}\times(\FM)_k$ is canonically isomorphic to the pull back by $\id\times \partial^k_i$ of the fibration of $\tM_\Gamma^{k-1}$ over $P_\Gamma^{k-1}$. Hence, for any section $F$ of the projection $\tM_\Gamma^{k-1}\rightarrow P_\Gamma^{k-1}$, there is a natural pull back
$
(\id\times \partial^k_i)^*F,
$  the unique section of $\Delta^{k-1}\times(\tM_r)_k\rightarrow \Delta^{k-1}\times(\FM)_k$
making the following diagram commutative:
\begin{equation}
\xymatrix{\Delta^{k-1}\times(\tM_r)_k\ar[r]^{\hspace{0.5cm}\id\times \partial^k_i}&\tM_\Gamma^{k-1}\\
\Delta^{k-1}\times(\FM)_k\ar[r]^{\hspace{0.5cm}\id\times \partial^k_i}\ar[u]^{(\id\times \partial^k_i)^*F}&P_\Gamma^{k-1}\ar[u]_F
}
.
\end{equation}

 \begin{lemma}\label{bc}
 There exist sections
 $$
 \xymatrix{\tM_\Gamma^k\ar[d]\\
P_\Gamma^k\ar@/^2pc/[u] ^{F_k}}
 $$
 of the projections
 $
 \tM_\Gamma^k\rightarrow P_\Gamma^k
 $
 satisfying the compatibility conditions
   \begin{equation*}(\epsilon_i^k\times \id )^*F_k=(\id \times \partial_i^k)^*F_{k-1}
   \end{equation*}

\end{lemma}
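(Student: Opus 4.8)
The plan is to construct the sections $F_k$ inductively on $k$, using the contractibility of the fiber $\tG_1$ of the projection $\tM^k_\Gamma\to P^k_\Gamma$ (and the fact that the base simplices and finite powers of $\Gamma$ are paracompact, so sections exist and extend). The base case $k=0$ is exactly the section $F$ whose existence was recalled before the definition of the Fedosov connection: $\tMr\to\FM$ is a principal $\tG_1$-bundle with $\tG_1$ contractible, hence a section $F_0:=F$ exists (and can be chosen $\Sp(2d,\R)$-equivariant, though that is not needed here). Assume now that sections $F_0,\ldots,F_{k-1}$ have been chosen satisfying the compatibility conditions \eqref{connectioncompatibility} in all lower degrees.

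For the inductive step, consider the boundary $\partial\Delta^k=\bigcup_i\epsilon_i^k(\Delta^{k-1})$. On the subset $\epsilon_i^k(\Delta^{k-1})\times(\FM)_k\subset P^k_\Gamma$ the required value of $F_k$ is forced by the compatibility condition: it must equal $(\id\times\partial_i^k)^*F_{k-1}$, using the canonical identification of $\Delta^{k-1}\times(\tM_r)_k\to\Delta^{k-1}\times(\FM)_k$ with the pullback of $\tM^{k-1}_\Gamma\to P^{k-1}_\Gamma$ along $\id\times\partial_i^k$ displayed in the diagram above. The first point to check is that these prescribed values agree on the overlaps $\epsilon_i^k(\Delta^{k-1})\cap\epsilon_j^k(\Delta^{k-1})$; this follows from the simplicial identities $\epsilon_j^k\epsilon_i^{k-1}=\epsilon_i^k\epsilon_{j-1}^{k-1}$ for $i<j$ together with $\partial_i^{k-1}\partial_j^k=\partial_{j-1}^k\partial_i^k$ and the inductive hypothesis that $F_{k-1}$ is compatible. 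Thus the prescribed values glue to a well-defined section $F_k^{\partial}$ of $\tM^k_\Gamma\to P^k_\Gamma$ over $\partial\Delta^k\times(\FM)_k$.

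It remains to extend $F_k^\partial$ to a section $F_k$ over all of $P^k_\Gamma=\Delta^k\times(\FM)_k$. This is where the contractibility of $\tG_1$ enters decisively: sections of a principal $\tG_1$-bundle, or equivalently of the associated fiber bundle with fiber $\tG_1$, form a sheaf of nonempty contractible-valued sets over a paracompact base, so any section defined on the closed subset $\partial\Delta^k\times(\FM)_k$ extends to the whole base. Concretely, since $\tG_1=\exp\g_{\geq1}$ is pro-unipotent and contractible (in fact an inverse limit of Euclidean spaces via the filtration $\tG_1\supset\tG_2\supset\cdots$), one builds the extension stage by stage along this filtration, at each finite stage solving an extension problem for sections of an affine bundle modeled on a vector bundle, which is always solvable over a paracompact space by a partition of unity; one may also invoke that $\partial\Delta^k\hookrightarrow\Delta^k$ is a cofibration. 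Setting $F_k$ to be any such extension completes the induction.

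I expect the main obstacle to be purely bookkeeping rather than conceptual: verifying carefully that the boundary prescriptions are mutually consistent on all the codimension-$\geq2$ faces, i.e. checking the cocycle-type compatibility of the family $\{(\id\times\partial_i^k)^*F_{k-1}\}_i$ on $\partial\Delta^k$ using the simplicial identities for the $\epsilon_i^k$ and $\partial_i^k$. Once that gluing is in place, the extension step is a standard obstruction-theoretic fact given that the fiber $\tG_1$ is contractible, so there is nothing to obstruct. (The $\Sp(2d,\R)$-equivariance of the $F_k$, if desired later, can be arranged by averaging or by using that the relevant spaces are solid, exactly as for $F_0$.)
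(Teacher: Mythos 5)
Your proposal is correct and follows essentially the same route as the paper: induct on $k$, observe that the compatibility conditions prescribe $F_k$ on $(\partial\Delta^k)\times\FM\times\Gamma^k$ via $F_{k-1}$, and then extend over all of $P_\Gamma^k$ using that the principal $\tG_1$-bundle is trivial and $\tG_1\cong\g_{\geq 1}$ via the exponential map, so the extension is an unobstructed (vector-space-valued) smooth extension problem. Your explicit check that the prescribed boundary values agree on overlapping faces, via the simplicial identities and the inductive hypothesis, is a point the paper's proof leaves implicit, and is a welcome addition.
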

\begin{proof}

 \leavevmode

 \noindent We construct the sections $F_k$ recursively. In section \ref{section2} we constructed the inital section $F_r\colon P_\Gamma^0=\FM\longrightarrow \tMr$. Set $F_0:=F_r$.

Now suppose we have found $F_l$ satisfying the compatibility conditions for all $l<k$.
Notice that the principal bundle $\tM_\Gamma^k\rightarrow P_\Gamma^k$ is trivial, and thus, by fixing a trivialization, we can view its sections as functions
on $P_\Gamma^k$ with values in $\tG_1$. which we identify with a vector space $\g_1$ via the exponential map.
The compatibility conditions require that $F_k$ takes on certain values determined by $F_{k-1}$ on $(\partial \Delta^k) \times \FM\times \Gamma^k \subset P_\Gamma^k$ .
Since $\tG_1$ can be identified with a vector space $\g_1$ via the exponential map, $F_k$ can be extended smoothly from  $(\partial \Delta^k) \times \mathcal{F}\times \Gamma^k$ to $P_\Gamma^k$

\end{proof}

As before we can construct the sections in the lemma above $\Sp(2d)$ equivariantly and we will fix a system of such equivariant sections $\{F_k\}_{k\geq0}$ from now on.
Now,  as before,  we can use the sections $F_k$ to pull back the canonical
connection form from $\tM_\Gamma^k$ (which was itself pulled back from  $\tM$ through the composition of the projection onto $\tMr$ and an inclusion), to define a
$\g$-valued differential form $A_{Fk}$ on $P^k_\Gamma$ for each $k$.

\begin{notation}
 Suppose $\left(\mathbb{L}^\bullet,\partial_{\mathbb{L}}\right)$ is a $(\g,\tGr)$-cochain complex. Then  we denote by
 $\left(\mathbb{L}_{\pi_r}^\bullet,\partial_{\mathbb{L}}\right)$ the bundle of cochain
 complexes over $M$ associated to $\tMr$, i.e. with total space $\tMr\times_{\tGr}\mathbb{L}$. We will denote the pull-back to the
 $M_\Gamma^k$ by the same symbol. Note that the pullback $\pi^*\mathbb{L}_{\pi_r}$ is exactly the bundle associated to $\tMr\rightarrow\FM$
 with fiber the $(\g,\tG_1)$-cochain complex $\mathbb{L}$ given by $\tG_1\hookrightarrow \tGr$, i.e the pull-back has total space $\tMr\times_{\tG_1}\mathbb{L}$. Thus we will
 denote $\pi^*\mathbb{L}_{\pi_r}=\mathbb{L}_{\pi_1}$. Again we will use the same notation for the pull-backs over the $P_\Gamma^k$.

 \end{notation}

\begin{remark}
 Note that since $\Gamma$ acts on the $\tGr$-bundle $\tMr\rightarrow M$ we find that
 $\Gamma$ also acts on
 $\mathbb{L}_{\pi_r}^\sbullet$ and, since this action lifts to an $\Sp(2d)$-equivariant action on $\tMr\rightarrow\FM $, we find a corresponding action on the space
 $\Omega^\bullet\left(M;\mathbb{L}^\bullet\right)$. Let us be a bit more precise about this action. Note first that the section $F_0$ yields a trivialization
 of $\tMr\rightarrow \FM$. This means also that it yields a trivialization (denoted by the same symbol)
 \begin{equation*}F_0\colon \FM\times\mathbb{L}\stackrel{\sim}{\longrightarrow} \tMr\times_{G_1}\mathbb{L}\end{equation*} explicitly given by $(p,\ell)\mapsto [F_0(p),\ell]$ with the inverse given by mapping
 $[\phi_m,\ell]$ to \\$\left(\pi_1(\phi_m),\left(\phi_m\circ F_0(\pi_1(\phi_m))^{-1}\right)(\ell)\right)$. In these terms the action is given by
 \begin{equation*}\gamma(\eta\otimes\ell)=(\gamma^*\eta)\otimes (F_0^{-1})^*\gamma^*F_0^*\ell\end{equation*}
 where $(F_0^{-1})^*\gamma^*F_0^*\ell$ is the section given by
 \begin{equation*}p\mapsto (\gamma(p),F_0(p)\gamma F_0(\gamma(p))^{-1}\ell).\end{equation*} We consider the corresponding action of $\Gamma$ on the  spaces
 $\Omega^\bullet\left(M_\Gamma^k;\mathbb{L}^\bullet\right)$, where we use $F_k$ instead of $F_0$ (or in fact on
 $\Omega^\bullet\left(N\times M;\mathbb{L}^\bullet\right)$ for any $N$).
\end{remark}

Note that the differential forms $A_{F_k}$ define flat connections $\nabla_{F_k}$ on $\Omega^\bullet\left(M^k_\Gamma;\mathbb{L}^\bullet\right)$ for all $k$ and so we can
consider the product complex
$\prod_{k}\Omega^\bullet\left(M^k_\Gamma;\mathbb{L}^\bullet\right)$ with the differential $\tilde{\nabla}+\partial_{\mathbb{L}}$, where
$\tilde{\nabla}=\prod_k\nabla_{F_k}$. Note also that the connections $\nabla_{F_k}$ satisfy the compatibility conditions of the equation \eqref{connectioncompatibility}.

Now we can consider the equivariant Gelfand--Fuks map
\begin{equation*}GF_\Gamma\colon C_{Lie}^\bullet\left(\g,\lsp\left(2d\right);\mathbb{L}^\bullet\right)\longrightarrow\prod_{k}\Omega^\bullet\left(M^k_\Gamma;\mathbb{L}^\bullet\right)\end{equation*}
given by
\begin{equation}\label{GFG}GF_\Gamma\left(\chi\right)_k=\chi\circ A_{F_k}^{\otimes p},\end{equation}
where $\chi\in C_{Lie}^p\left(\g,\lsp\left(2d\right);\mathbb{L}^\bullet\right)$ and the subscript $k$ refers to taking the $k$-th coordinate in the product. In other
words the definition is the same as in definition \ref{GF} only we now use the compatible system of connections $A_{F_k}$.

\begin{lemma}\label{gfsdr}
 For all $\chi\in C^p_{Lie}(\g,\lsp(2d);\mathbb{L}^\sbullet)$ we have that $GF_\Gamma(\chi)\in \Omega^\sbullet\left(M\times_\Gamma E\Gamma, \mathbb{L}^\bullet\right)$.
\end{lemma}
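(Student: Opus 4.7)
My plan is to verify two things separately: first, that each $GF_\Gamma(\chi)_k$ actually lies in $\Omega^\sbullet(M_\Gamma^k;\mathbb{L}^\sbullet)$, i.e.\ is $\Sp(2d,\R)$-invariant and annihilated by contractions with vertical $\lsp(2d,\R)$ fields; and second, that the collection $\{GF_\Gamma(\chi)_k\}_{k\geq 0}$ satisfies the face compatibility relations \eqref{eq:compatibility conditions}. Together these assertions amount exactly to the claim that $GF_\Gamma(\chi)$ is an $\mathbb{L}^\sbullet$-valued de Rham--Sullivan form on $M\times_\Gamma E\Gamma$.

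For the first point I would argue exactly as in the classical Gelfand--Fuks case (Definition \ref{GF} and the theorem following it), transplanted to the simplicial setting. The sections $F_k$ from Lemma \ref{bc} are $\Sp(2d,\R)$-equivariant by construction, so $A_{F_k}=F_k^*\omega_\hbar$ is an $\Sp(2d,\R)$-equivariant $\g$-valued one-form on $P_\Gamma^k$, restricting on vertical $\lsp(2d,\R)$ directions to the tautological inclusion $\lsp(2d,\R)\hookrightarrow\g$. Since $\chi\in C_{Lie}^p(\g,\lsp(2d,\R);\mathbb{L}^\sbullet)$ is $\lsp(2d,\R)$-basic and $\Sp(2d,\R)$-equivariant, the $p$-fold contraction $\chi\circ A_{F_k}^{\otimes p}$ inherits both properties, and so descends to $\Omega^\sbullet(M_\Gamma^k;\mathbb{L}^\sbullet)$ as required.

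For the second point I would first check that the $\g$-valued one-forms $A_{F_k}$ themselves satisfy the compatibility relations, and then propagate this to $GF_\Gamma(\chi)$ via multilinearity of $\chi$. The key computation uses functoriality of pullback together with Lemma \ref{bc}:
\[ (\epsilon_i^k\times \id)^*A_{F_k}=\bigl((\epsilon_i^k\times \id)^*F_k\bigr)^*\omega_\hbar=\bigl((\id\times\partial_i^k)^*F_{k-1}\bigr)^*\omega_\hbar=(\id\times\partial_i^k)^*A_{F_{k-1}}. \]
Applying $\chi$ to the $p$-th tensor power then yields $(\epsilon_i^k\times \id)^*GF_\Gamma(\chi)_k=(\id\times\partial_i^k)^*GF_\Gamma(\chi)_{k-1}$, which is the required face compatibility.

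The main conceptual point --- and essentially the only place where care is needed --- is that $GF_\Gamma(\chi)_k$ is valued in the equivariant bundle $\mathbb{L}^\sbullet_{\pi_r}$, so for the face $\partial_0^k$ (the one that genuinely involves the $\Gamma$-action) the compatibility relation uses the canonical identification $(\partial_0^k)^*\mathbb{L}^\sbullet_{\pi_r}\simeq \mathbb{L}^\sbullet_{\pi_r}$ of \eqref{isoV}, coming from the $\Gamma$-action on the associated bundle described in the remark preceding the lemma. This identification is compatible with $\omega_\hbar$ precisely because the $\Gamma$-action on $\tMr$ is by precomposition while the $\tGr$-action used to define $\omega_\hbar$ is by postcomposition, and the two actions commute (as noted at the beginning of this section). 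Once this bookkeeping is in place, the chain of equalities above takes place in the correct bundle and the verification concludes; all other steps are formal manipulations of pullbacks.
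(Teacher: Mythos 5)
Your proposal is correct and follows essentially the same route as the paper: the paper's proof is a one-liner observing that the compatibility conditions imposed on the sections $F_k$ in Lemma \ref{bc} were designed exactly so that the forms $A_{F_k}$, and hence $GF_\Gamma(\chi)$, satisfy the face relations \eqref{eq:compatibility conditions}. Your write-up merely makes explicit what the paper leaves implicit (the pullback computation, the basic-form condition on each $M_\Gamma^k$, and the identification \eqref{isoV} for the $\partial_0^k$ face via $\Gamma$-invariance of $\omega_\hbar$), all of which is consistent with the paper's argument.
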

\begin{proof}

 \leavevmode

 \noindent The boundary conditions put on the sections $F_k$ in lemma \ref{bc} are meant exactly to ensure this property of the Gelfand-Fuks map $GF_\Gamma$. The lemma
 follows straightforwardly from these boundary conditions.
\end{proof}

\begin{theorem}\label{gfg}

 The equivariant Gelfand-Fuks map   is a morphism of complexes
 \[
GF_\Gamma\colon  C_{Lie}^\bullet\left(\g,\lsp\left(2d\right);\mathbb{L}^\bullet\right) \rightarrow \Omega^\sbullet\left(M\times_\Gamma E\Gamma, \mathbb{L}^\bullet\right).
 \]

\end{theorem}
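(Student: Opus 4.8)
The plan is to reduce the statement to the non-equivariant Gelfand--Fuks theorem, applied one simplex at a time, together with Lemma \ref{gfsdr}. Recall that $\Omega^\sbullet(M\times_\Gamma E\Gamma,\mathbb{L}^\bullet)$ sits inside the product complex $\prod_k\Omega^\bullet(M^k_\Gamma;\mathbb{L}^\bullet)$ as the subspace of de Rham--Sullivan (compatible) systems, and that this subspace is a \emph{subcomplex} for the differential $\tilde{\nabla}+\partial_{\mathbb{L}}$, with $\tilde{\nabla}=\prod_k\nabla_{F_k}$: the connection part $\tilde{\nabla}$ preserves compatibility because the $\nabla_{F_k}$ satisfy the conditions \eqref{connectioncompatibility} (this is exactly why the sections $F_k$ were chosen compatibly in Lemma \ref{bc}), and the internal differential $\partial_{\mathbb{L}}$ acts fiberwise and commutes with the canonical identifications \eqref{isoV}, since $\mathbb{L}^\bullet$ is a $(\g,\Sp(2d,\R))$-cochain complex. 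So it is enough to show that $GF_\Gamma$ is a morphism of double complexes from $(C_{Lie}^\bullet(\g,\lsp(2d);\mathbb{L}^\bullet),\partial_{Lie},\partial_{\mathbb{L}})$ to $(\prod_k\Omega^\bullet(M^k_\Gamma;\mathbb{L}^\bullet),\tilde{\nabla},\partial_{\mathbb{L}})$, and then invoke Lemma \ref{gfsdr} to corestrict it to the compatible subcomplex.

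First I would treat each index $k$ separately. The one-form $A_{F_k}$ on $P^k_\Gamma$ is, by construction, the pullback along $F_k$ of the canonical connection form on $\tM^k_\Gamma$, which in turn is the pullback of the Maurer--Cartan form $\omega_\hbar$ on $\tM$; hence $A_{F_k}$ satisfies $dA_{F_k}+\frac{1}{2}[A_{F_k},A_{F_k}]=0$, and it is $\Sp(2d)$-equivariant because $F_k$ was chosen $\Sp(2d)$-equivariantly. These are precisely the two inputs in the proof of the non-equivariant Gelfand--Fuks theorem stated after Definition \ref{GF}; that argument is local on the base and pointwise in the fiber, so it goes through verbatim with $\FM$ replaced by $P^k_\Gamma=\Delta^k\times\FM\times\Gamma^k$, the extra factor $\Delta^k\times\Gamma^k$ carrying the trivial $\Sp(2d)$-action and playing no role. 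Consequently, for each $k$, the assignment sending a degree-$p$ cochain $\chi$ to $(GF_\Gamma(\chi))_k=\chi\circ A_{F_k}^{\otimes p}$ is a morphism of double complexes $C_{Lie}^\bullet(\g,\lsp(2d);\mathbb{L}^\bullet)\to\Omega^\bullet(M^k_\Gamma;\mathbb{L}^\bullet)$: it lands in $\Sp(2d)$-basic forms by the equivariance of $A_{F_k}$, it intertwines $\partial_{Lie}$ with $\nabla_{F_k}$ by the Maurer--Cartan identity, and it commutes with $\partial_{\mathbb{L}}$.

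Assembling these over all $k$, the map $GF_\Gamma=\prod_k\bigl(\chi\mapsto(GF_\Gamma(\chi))_k\bigr)$ is a morphism of double complexes with target $(\prod_k\Omega^\bullet(M^k_\Gamma;\mathbb{L}^\bullet),\tilde{\nabla},\partial_{\mathbb{L}})$. By Lemma \ref{gfsdr}, its image consists of compatible systems, hence lies in the subcomplex $\Omega^\sbullet(M\times_\Gamma E\Gamma,\mathbb{L}^\bullet)$; the resulting corestriction $GF_\Gamma\colon C_{Lie}^\bullet(\g,\lsp(2d);\mathbb{L}^\bullet)\to\Omega^\sbullet(M\times_\Gamma E\Gamma,\mathbb{L}^\bullet)$ is then the desired morphism of complexes.

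I do not expect a real obstacle here: the substantive work has already been carried out in Lemma \ref{bc} (the compatible family $F_k$) and Lemma \ref{gfsdr} (compatibility of the $GF_\Gamma$-image). The only points demanding a little care are (a) confirming that the non-equivariant chain-map computation is genuinely insensitive to the extra base directions $\Delta^k\times\Gamma^k$ and their trivial $\Sp(2d)$-action, which holds because that computation is purely local on the base and uses only the Maurer--Cartan equation together with the $\Sp(2d)$-equivariance of the connection form, and (b) checking that $\tilde{\nabla}+\partial_{\mathbb{L}}$ indeed restricts to the compatible subcomplex, which is immediate from \eqref{connectioncompatibility} and the naturality of $\partial_{\mathbb{L}}$ with respect to the identifications \eqref{isoV}.
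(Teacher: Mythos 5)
Your proposal is correct and follows essentially the same route as the paper: the paper's proof also proceeds coordinate-wise in the product, using the Maurer--Cartan identity for $A_{F_k}$ and $\tilde{\nabla}=\prod_k d_{P^k_\Gamma}+A_{F_k}$ to reduce to the non-equivariant computation, with Lemma \ref{gfsdr} guaranteeing that the image lies among the compatible forms. Your additional remark that the compatible forms constitute a subcomplex (via \eqref{connectioncompatibility} and the fiberwise nature of $\partial_{\mathbb{L}}$) is a point the paper leaves implicit, but it is the same argument.
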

\begin{proof}

 \leavevmode

 \noindent This proof is exactly the same as in the non-equivariant setting, carried out coordinate-wise in the product. Since we do not give the usual
 proof in this article let us be a bit more explicit. Note that
 \begin{equation*}GF_\Gamma\colon C_{Lie}^\bullet\left(\g,\lsp\left(2d\right);\mathbb{L}^\bullet\right)\longrightarrow \prod_k\Omega^\bullet\left(M^k_\Gamma;\mathbb{L}^\bullet\right)\end{equation*}
 is given by \begin{equation*}GF_\Gamma\left(\chi\right)_{m_k}\left(X_1,\ldots X_p\right)=\chi\left(\left(A_{F_k}\right)_{m_k}X_1,\ldots, \left(A_{F_k}\right)_{m_k}X_p\right),\end{equation*}
 for $\chi\in C_{Lie}^p\left(\g,\lsp\left(2d\right);\mathbb{L}^\bullet\right)$, $m_k\in P^k_\Gamma$ and $X_i\in T_{m_k}P^k_\Gamma$. The differential
 $\partial_{\mathbb{L}}$ and $GF_\Gamma$
 clearly commute so it is left
 to show that \begin{equation*}GF_\Gamma\circ\partial_{Lie}=\tilde{\nabla}\circ GF_\Gamma.\end{equation*} This follows by direct computation using the facts that \begin{equation*}dA_{F_k}+\frac{1}{2}[A_{F_k},A_{F_k}]=0\end{equation*} and
 \begin{equation*}\tilde{\nabla}=\prod_k d_{\scriptstyle P^k_\Gamma}+A_{F_k},\end{equation*} here $d_{\scriptstyle P^k_\Gamma}$ refers to the de Rham differential on
 $P^k_\Gamma$.

\end{proof}


\section{Pairing with \texorpdfstring{$\HC_\bullet^{per}\left(\AHc\rtimes\Gamma\right)$}{HC(AG)}}\label{4}

In order to derive the equivariant version of the algebraic index theorem we should show that the universal class $\hat{\tau}_a$ maps to the class of the
trace supported at the identity under the equivariant Gelfand-Fuks map $GF_\Gamma$ constructed in the previous section. The class $\hat{\tau}_a$ lives in the Lie
algebra cohomology with values in the $(\g,\tGr)$-cochain complex
\begin{equation*}\mathbb{L}^\sbullet:=\mbox{Hom}^{-\sbullet}(CC^{per}_\sbullet(\WW_{\hbar}),\hat{\Omega}^{-\sbullet}[u^{-1},u\rrbracket[\hbar^{-1},\hbar\rrbracket[2d]).\end{equation*}
Here the action is induced (through conjugation) by the action on $\WW$ and by the action (by modding out $\hbar$) on $\hat{\Omega}$.
From now on the notation $\mathbb{L}^\sbullet$ will refer to this complex. The differential $\partial_\mathbb{L}$ on $\mathbb{L}^\sbullet$ is given by
viewing it as the usual morphism space internal to chain complexes. In order to derive the equivariant algebraic index theorem we shall have to pair
classes in Lie algebra cohomology with values in $\mathbb{L}^\sbullet$ with periodic cyclic chains of $\AHc\rtimes\Gamma $ using the equivariant Gelfand--Fuks map.
Since the trace on $\AHc\rtimes\Gamma $ is supported at the identity we only need to consider the component of the cyclic complexes supported at the identity.

\begin{definition}[Homogeneous Summand]
	
	\leavevmode
	
 Let $CC^{per}_\sbullet(\AH\rtimes\Gamma)_e$ be the subcomplex spanned (over $\C[u^{-1},u\rrbracket$) by the chains
 \begin{equation*}a_0\gamma_0\otimes\ldots \otimes a_n\gamma_n\hspace{0.5cm}\mbox{such that}\hspace{0.3cm} \gamma_0\gamma_1\ldots\gamma_n=e\in \Gamma,\end{equation*}
 where $e$ denotes the neutral element of $\Gamma$.
\end{definition}

\begin{proposition}
 The map \begin{equation*}D\colon CC^{per}_{\sbullet}(\AH\rtimes\Gamma)\longrightarrow C_\sbullet(\Gamma;CC^{per}_\sbullet(\AH))\end{equation*}
 given by composing the projection \begin{equation*}CC_{\sbullet}^{per}(\AH\rtimes \Gamma)\longrightarrow CC^{per}_{\sbullet}(\AH\rtimes\Gamma)_e\end{equation*} with the
 quasi-isomorphism of theorem \ref{thm:A13} in the appendix is a morphism of complexes.
\end{proposition}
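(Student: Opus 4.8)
The plan is to factor $D$ as a composite of two morphisms of complexes and argue that each factor is one. The second factor is the quasi-isomorphism of Theorem~\ref{thm:A13}, which is asserted there to be a quasi-isomorphism and hence in particular a chain map, so the only real point is that the projection $CC_{\sbullet}^{per}(\AH\rtimes \Gamma)\to CC^{per}_{\sbullet}(\AH\rtimes\Gamma)_e$ is a morphism of complexes. Since a composite of two morphisms of complexes is again one, this will finish the proof.

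To establish that the projection is a chain map, I would first recall the conjugacy-class decomposition of the cyclic theory of a crossed product: the (normalized) Hochschild, cyclic and periodic cyclic complexes of $\AH\rtimes\Gamma$ split, as $\C[u^{-1},u\rrbracket$-modules, into a direct sum indexed by the conjugacy classes $\langle g\rangle\subset\Gamma$, where the summand of $\langle g\rangle$ is spanned by the chains $a_0\gamma_0\otimes\cdots\otimes a_n\gamma_n$ with $\gamma_0\gamma_1\cdots\gamma_n\in\langle g\rangle$. The elementary computation underlying the statement is that all the structure maps of the underlying cyclic object move the group element $\gamma_0\cdots\gamma_n$ only within its conjugacy class: the faces $d_i$ with $0\le i<n$ and the degeneracies leave $\gamma_0\cdots\gamma_n$ unchanged (they multiply two adjacent factors $a_i\gamma_i\otimes a_{i+1}\gamma_{i+1}\mapsto a_i\gamma_i(a_{i+1})\,\gamma_i\gamma_{i+1}$, resp.\ insert a factor $1\cdot e$), while the last face $d_n$ and the cyclic operator $t$ replace it by $\gamma_n(\gamma_0\cdots\gamma_n)\gamma_n^{-1}$. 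Consequently every conjugacy-class summand is a subcomplex for $b+uB$; in particular the summand $CC^{per}_\sbullet(\AH\rtimes\Gamma)_e$ associated with $\langle e\rangle=\{e\}$ is a subcomplex and so is the direct sum of the remaining summands, whence the projection onto $CC^{per}_\sbullet(\AH\rtimes\Gamma)_e$ is a morphism of complexes. Combining this with Theorem~\ref{thm:A13} gives the claim.

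I do not expect a genuine obstacle here. The only substantive input is Theorem~\ref{thm:A13} itself — the (by now classical, going back to Burghelea and, for crossed products, to Feigin--Tsygan and Nistor) identification of the identity component of the cyclic complex of $\AH\rtimes\Gamma$ with the group-chain complex $C_\sbullet(\Gamma;CC^{per}_\sbullet(\AH))$ — and this is proved in the appendix and may simply be invoked. The one place where a little care is needed is in checking that \emph{all} the operators entering $b+uB$, not only the faces but also the cyclic permutation $t$ and the extra degeneracy defining $B$, respect the conjugacy-class grading; this is exactly the observation $\gamma_0\cdots\gamma_n\mapsto\gamma_n(\gamma_0\cdots\gamma_n)\gamma_n^{-1}$ recorded above, and once it is in place the proposition is immediate.
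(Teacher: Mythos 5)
Your proposal is correct and follows essentially the same route as the paper: the appendix observes that the cyclic structure maps of $(\AH\rtimes\Gamma)^\natural$ preserve the conjugacy class of $\gamma_0\cdots\gamma_n$, so the identity summand is a direct summand of cyclic modules and the projection is a chain map, which is then composed with the chain maps underlying Theorem \ref{thm:A13}. One harmless slip: the cyclic operator $t_n$ sends $\gamma_0\cdots\gamma_n$ to $\gamma_0^{-1}(\gamma_0\cdots\gamma_n)\gamma_0$ (conjugation by $\gamma_0^{-1}$, not by $\gamma_n$ as you wrote for $d_n$), but the conjugacy class, and in particular the summand at $e$, is still preserved, so the argument stands.
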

\noindent The proof is contained in the appendix.

As in lemma \ref{O=O}, the canonical inclusion $\iota$
$$
\left(\Omega^\sbullet(M\times_\Gamma E\Gamma)[\hbar^{-1},\hbar\rrbracket[u^{-1},u\rrbracket[2d], d\right)\rightarrow \left(\Omega^\sbullet(M\times_\Gamma E\Gamma;\hat{\Omega}^{-\sbullet}[\hbar^{-1},\hbar\rrbracket[u^{-1},u\rrbracket[2d]), \nabla_{F}+\hat{d}\right)
$$
is a quasi-isomorphism and we will denote its quasi-inverse by $T$
	\begin{equation*}
	T\colon\Omega^\sbullet(M\times_\Gamma E\Gamma;\hat{\Omega}^{-\sbullet}[\hbar^{-1},\hbar\rrbracket[u^{-1},u\rrbracket[2d])\longrightarrow
	\Omega^\sbullet(M\times_\Gamma E\Gamma)[\hbar^{-1},\hbar\rrbracket[u^{-1},u\rrbracket[2d]\end{equation*}

\begin{definition}\label{pairing}
	We define the pairing
\begin{equation*}\langle\cdot,\cdot\rangle\colon \Omega^\sbullet\left(M\times_\Gamma E\Gamma;\LL^\sbullet\right)\times C_\sbullet(\Gamma;CC^{per}_\sbullet(\AHc(M)))
\longrightarrow \C[\hbar^{-1},\hbar\rrbracket[u^{-1},u\rrbracket
\end{equation*}
as follows. In the following let  $\alpha =a \otimes (g_1 \otimes g_2\otimes\ldots\otimes  g_{p})\in CC^{per}_{k-p}(\AH(M))\otimes\left(\C\Gamma\right)^{\otimes p}$   and let
$\phi\in \Omega^\sbullet\left(M\times_\Gamma E\Gamma;\LL^\sbullet\right)$. We define

\begin{equation*}
\langle\phi, \alpha \rangle:= \int \limits_{\Delta^p\times M \times  g_1 \times\ldots \times g_p}T\phi_p(J^\infty_{F_p}(a)),
\end{equation*}
	where $J^\infty_{F_p}$ is the map given by taking the $\infty$-jets of elements of $\AH(M)$ relative to the Fedosov connection $\nabla_{F_p}$ as in the example \ref{JF}  using the section $F_p$ over $M_\Gamma^k$.
Since the integral of $\xi\in\Omega^k\left(M\times_\Gamma E\Gamma\right)$
over any simplex $\Delta^p$ for $p>k$ will vanish, the pairing $\langle\cdot,\cdot\rangle$ extends by linearity to $C_\sbullet(\Gamma;CC^{per}_\sbullet(\AHc(M)))$.
\end{definition}

\begin{lemma} \label{pairinggf}
We have:
\[\langle(\tilde{\nabla}_F+\partial_{\mathbb{L}})\phi, \alpha \rangle = (-1)^{|\phi|+1}\langle\phi, (\delta_\Gamma +b+uB)\alpha \rangle\]
\end{lemma}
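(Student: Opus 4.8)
The statement is a compatibility (Stokes-type) identity relating the differential on the equivariant de Rham--Sullivan complex with coefficients in $\mathbb{L}^\sbullet$ to the mixed differential $\delta_\Gamma+b+uB$ on the group-chain complex of the cyclic complex of $\AHc(M)$. The plan is to unwind both sides through Definition \ref{pairing} and reduce everything to three ingredients: (i) the ordinary (non-equivariant) fact that $J^\infty_{F_p}$ intertwines the Hochschild/cyclic differential $b+uB$ on $CC^{per}_\sbullet(\AH(M))$ with $\nabla_{F_p}+b+uB$ on $\Omega^\sbullet(M^p_\Gamma;CC^{per}_\sbullet(\WW))$ (this is Example \ref{JF}(3), applied over each $M^p_\Gamma$); (ii) the Stokes theorem on the simplex $\Delta^p$, which converts the de Rham differential on the $\Delta^p$-factor into the alternating sum of the face restrictions $\epsilon_i^p$; and (iii) the compatibility conditions \eqref{eq:compatibility conditions} and \eqref{connectioncompatibility} satisfied by the sections $F_p$ (Lemma \ref{bc}) and hence by $\nabla_{F_p}$ and by $T\phi_p$, which identify the boundary contributions on $\Delta^p$ with the face maps $\partial_i^p$ on $\Gamma^p$ — i.e. with the simplicial differential underlying $\delta_\Gamma$.

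\textbf{Steps.} First I would expand $\langle(\tilde\nabla_F+\partial_{\mathbb L})\phi,\alpha\rangle$ using that $\tilde\nabla_F=\prod_p\nabla_{F_p}$ and $T$ is a chain map up to the stated homotopy, so that $T((\nabla_{F_p}+\partial_{\mathbb L})\phi_p)$ differs from $(d_{\mathrm{dR}})(T\phi_p)$ only by terms that die after pairing (the homotopy correction integrates to zero for degree reasons on $M$, exactly as in the non-equivariant Proposition/Lemma \ref{O=O}). This turns the left-hand side into $\int_{\Delta^p\times M\times g_1\times\cdots\times g_p} d\big(T\phi_p(J^\infty_{F_p}(a))\big)$ plus the term where the differential hits $J^\infty_{F_p}(a)$; the latter, by Example \ref{JF}(3), is precisely $\langle\phi,(b+uB)\alpha\rangle$ up to sign. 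Second, on the term with the total de Rham differential $d$ on $\Delta^p\times M^p_\Gamma$ I apply Stokes: the $M$-directions give nothing (either $M$ is closed or we use compact support of $a$ via $\AHc$), while the $\Delta^p$-directions produce $\sum_{i=0}^p(-1)^i\int_{\Delta^{p-1}\times M\times g_1\times\cdots\times g_p}(\epsilon_i^p\times\id)^*\big(T\phi_p(J^\infty_{F_p}(a))\big)$. Third, I invoke the compatibility conditions: $(\epsilon_i^p\times\id)^*\phi_p=(\id\times\partial_i^p)^*\phi_{p-1}$ and the matching identity for $\nabla_{F_p}$, hence for $J^\infty_{F_p}$ and for $T$; this rewrites each boundary term as $\langle\phi_{p-1},\partial_i^p(\alpha)\rangle$, and the alternating sum over $i$ is exactly the simplicial boundary that defines $\delta_\Gamma$ on $C_\sbullet(\Gamma;CC^{per}_\sbullet(\AHc(M)))$ (reading off the face maps $\partial_i^p$ from their definition: $i=0$ is the twist $\gamma_1^{-1}(\cdot)$ acting on $M$-coefficients, $0<i<p$ multiplies adjacent $g$'s, $i=p$ drops the last one — matching the crossed-product cyclic differential). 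Collecting signs and combining with the $(b+uB)$ term from the first step yields $(-1)^{|\phi|+1}\langle\phi,(\delta_\Gamma+b+uB)\alpha\rangle$.

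\textbf{Sign bookkeeping.} The global sign $(-1)^{|\phi|+1}$ and the internal matching of signs is the only genuinely delicate point: one must track (a) the Koszul sign from moving $\partial_{\mathbb L}$ past $\phi$ in the $\mathrm{Hom}$-complex $\mathbb{L}^\sbullet=\mathrm{Hom}^{-\sbullet}(CC^{per}_\sbullet(\WW),\dots)$, (b) the orientation sign $(-1)^i$ in Stokes for the $i$-th face of $\Delta^p$, (c) the sign $(-1)^{\dim\Delta^p}=(-1)^p$ from commuting $d_{\Delta}$ past the $M$-form part, and (d) the sign conventions in $b+uB$ and in $\delta_\Gamma$ for the crossed product. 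I would fix a single convention for the product orientation of $\Delta^p\times M\times\Gamma^p$ at the outset and verify the three ingredient-identities with that convention, so the final sign drops out mechanically.

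\textbf{Main obstacle.} The conceptual steps are all "morally Stokes + compatibility", and each is either standard (Example \ref{JF}(3), Lemma \ref{O=O}) or built into the construction (Lemma \ref{bc}); the real work — and the step most likely to need care — is precisely the sign/orientation reconciliation in the previous paragraph, together with checking that the $T\phi_p$-homotopy term and the $M$-boundary term both vanish after integration (the first for degree reasons, the second by compact support in the $\AHc$ argument), since a spurious surviving term there would break the identity.
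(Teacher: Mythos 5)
Your proposal follows essentially the same route as the paper's proof: evaluate $\phi_p$ on the flat section $J^\infty_{F_p}(a)$ so that $\partial_{\mathbb{L}}$ turns into $\hat{d}$ of the evaluated form plus the $(b+uB)$-term, push through $T$ to get $d\,T(\phi_p(J^\infty_{F_p}(a)))$, then apply Stokes on $\Delta^p$ and the compatibility conditions from Lemma \ref{bc} to identify the boundary contributions with $\delta_\Gamma$. The only minor deviation is that you treat $T$ as a chain map merely up to homotopy and argue the correction away; in the paper $T$ is an honest chain map (quasi-inverse of $\iota$), so no such correction arises.
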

\begin{proof}
\begin{equation*}
\langle (\tilde{\nabla}_F+\partial_{\mathbb{L}})\phi, \alpha \rangle= \int \limits_{\Delta^p\times M \times  g_1\times\ldots\times g_p}T
((\tilde{\nabla}_F+\partial_{\mathbb{L}})\phi_p)(J^\infty_{F_p}(a))
\end{equation*}
Notice that $(\partial_{\mathbb{L}}\phi_p)(J^\infty_{F_p}(a)) = \hat{d}((\phi_p)(J^\infty_{F_p}(a))- (-1)^{|\phi|}\phi_p(J^\infty_{F_p}((b+uB)a))$.
Also, since $\tilde{\nabla}_F (J^\infty_{F_p}(a)) =0$ we have $(\tilde{\nabla}_F \phi_p)(J^\infty_{F_p}(a)) =\tilde{\nabla}_F (\phi_p)(J^\infty_{F_p}(a))$. Combining these formulas we obtain that $\langle (\tilde{\nabla}_F+\partial_{\mathbb{L}})\phi, \alpha \rangle$ equals
\begin{multline}\label{eq:prest}
\int \limits_{\Delta^p\times M \times  g_1\times\ldots\times g_p}T \left(
(\tilde{\nabla}_F+ \hat{d})(\phi_p(J^\infty_{F_p}(a)) - (-1)^{|\phi|}\phi_p(J^\infty_{F_p}((b+uB)a))) \right)=\\
\int \limits_{\Delta^p\times M \times  g_1\times\ldots\times g_p}d T (\phi_p(J^\infty_{F_p}(a)) - (-1)^{|\phi|}\langle \phi ,(b+uB)\alpha \rangle
\end{multline}
Applying  Stokes' formula to $\int \limits_{\Delta^p\times M \times  g_1\times\ldots\times g_p}d T (\phi_p(J^\infty_{F_p}(a))$ and noticing that the collection of forms $\{T (\phi_p(J^\infty_{F_p}(a))\}$ is compatible we see that
\begin{equation}\label{eq:post}
\int \limits_{\Delta^p\times M \times  g_1\times\ldots\times g_p}d T (\phi_p(J^\infty_{F_p}(a)) =   (-1)^{|\phi|+1}\langle \phi, \delta_\Gamma \alpha\rangle
\end{equation}
The statement of the lemma now follows from \eqref{eq:prest} and \eqref{eq:post}.

\end{proof}

Recall that we have a cap-product
\[
C_\sbullet(\Gamma;CC^{per}_\sbullet(\AHc(M))) \otimes C^\bullet(\Gamma, \C) \overset{\cap}{\longrightarrow} C_\sbullet(\Gamma;CC^{per}_\sbullet(\AHc(M))).
\]
\begin{definition}\label{C}
	Let $\xi \in C^\bullet(\Gamma, \C)$ be a cocycle. Define
	\begin{equation*}
	I_\xi\colon C^\sbullet_{Lie}(\g,\lsp(2d);\LL^\sbullet)\longrightarrow CC^{\sbullet+|\xi|}_{per}(\AHc\rtimes\Gamma)
	\end{equation*}
	by
\begin{equation*}
I_\xi (\lambda)(a)=\displaystyle  \epsilon(|\lambda|)\langle GF_\Gamma(\lambda),D(a) \cap \xi \rangle
\end{equation*}
for all $\lambda\in C^\sbullet_{Lie}(\g,\lsp(2d);\LL^\sbullet)$ and
	$a\in CC^{per}_\sbullet(\AHc\rtimes\Gamma)$, where
\[
\epsilon(m)=(-1)^{m(m+1)/2}.
\]
\end{definition}

\begin{proposition}\label{rly}
	The map
	\begin{equation*}I_\xi \colon \left(C^\sbullet_{Lie}(\g,\lsp(2d);\LL^\sbullet),\partial_{Lie}+\partial_\LL\right)\longrightarrow
	\left(CC^{\sbullet+|\xi|}_{per}(\AHc\rtimes\Gamma ),(b+uB)^*\right)\end{equation*} is a morphism of complexes.
\end{proposition}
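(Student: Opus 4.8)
The plan is to verify that $I_\xi$ commutes with the differentials up to the usual sign, using the three structural ingredients already in place: that $GF_\Gamma$ is a morphism of complexes (Theorem~\ref{gfg}), that the pairing $\langle\cdot,\cdot\rangle$ satisfies the adjunction identity of Lemma~\ref{pairinggf}, and that $D$ is a morphism of complexes (the Proposition preceding Definition~\ref{C}, proved in the appendix). First I would compute $(b+uB)^*\bigl(I_\xi(\lambda)\bigr)$ directly from the definition $I_\xi(\lambda)(a)=\epsilon(|\lambda|)\langle GF_\Gamma(\lambda),D(a)\cap\xi\rangle$. Applying the Hochschild--Connes differential to the cochain $I_\xi(\lambda)$ amounts to precomposing with $b+uB$ on chains, so $(b+uB)^*\bigl(I_\xi(\lambda)\bigr)(a)=\pm\,\epsilon(|\lambda|)\langle GF_\Gamma(\lambda),D((b+uB)a)\cap\xi\rangle$. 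Since $D$ is a chain map intertwining $b+uB$ on $CC^{per}_\sbullet(\AHc\rtimes\Gamma)$ with $\delta_\Gamma+b+uB$ on $C_\sbullet(\Gamma;CC^{per}_\sbullet(\AHc))$, and since $\xi$ is a group cocycle so that capping with $\xi$ commutes with $\delta_\Gamma+b+uB$ (this is the standard compatibility of the cap product with the differentials), we get $D((b+uB)a)\cap\xi=(\delta_\Gamma+b+uB)(D(a)\cap\xi)$.

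Next I would feed this into Lemma~\ref{pairinggf}, which gives $\langle GF_\Gamma(\lambda),(\delta_\Gamma+b+uB)\beta\rangle=(-1)^{|GF_\Gamma(\lambda)|+1}\langle(\tilde\nabla_F+\partial_\LL)GF_\Gamma(\lambda),\beta\rangle$ for $\beta=D(a)\cap\xi$; note $|GF_\Gamma(\lambda)|=|\lambda|$ since $GF_\Gamma$ preserves total degree. Then Theorem~\ref{gfg} lets me replace $(\tilde\nabla_F+\partial_\LL)GF_\Gamma(\lambda)$ by $GF_\Gamma\bigl((\partial_{Lie}+\partial_\LL)\lambda\bigr)$. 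Unwinding, the right-hand side becomes, up to sign, $\langle GF_\Gamma((\partial_{Lie}+\partial_\LL)\lambda),D(a)\cap\xi\rangle$, which is precisely $\pm\,\epsilon(|\lambda|+1)^{-1}\,I_\xi\bigl((\partial_{Lie}+\partial_\LL)\lambda\bigr)(a)$. The whole argument is thus a chain of three substitutions, and the only real content beyond ``apply the three lemmas in sequence'' is the bookkeeping of signs.

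The main obstacle, and the step I would be most careful about, is exactly this sign bookkeeping: the Koszul signs coming from the degree of $\lambda$, the factor $\epsilon(m)=(-1)^{m(m+1)/2}$, the sign $(-1)^{|\phi|+1}$ in Lemma~\ref{pairinggf}, and the sign in the compatibility of the cap product with $\delta_\Gamma+b+uB$ must combine to give exactly the sign implicit in calling $I_\xi$ a morphism of complexes into $\bigl(CC^{\sbullet+|\xi|}_{per},(b+uB)^*\bigr)$. The choice of $\epsilon$ is evidently tailored so that $\epsilon(m)$ and $\epsilon(m+1)$ differ by $(-1)^{m+1}$, which should absorb the $(-1)^{|\phi|+1}$ from Lemma~\ref{pairinggf}; I would check that the remaining signs (from the chain map property of $D$ and from capping with $\xi$, noting $\xi$ is a cocycle so no anomalous term appears) cancel. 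A secondary point worth stating explicitly is that $D(a)\cap\xi$ indeed lands in the subcomplex of $C_\sbullet(\Gamma;CC^{per}_\sbullet(\AHc))$ to which the pairing of Definition~\ref{pairing} applies, so that all the expressions above are well-defined; this is immediate since capping lowers the simplicial degree by $|\xi|$ and leaves the $\AHc$-coefficients untouched, keeping everything compactly supported. Once the signs are pinned down, the proof is a three-line computation.
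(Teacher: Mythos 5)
Your proposal is correct and follows essentially the same route as the paper's proof: the same three ingredients (Theorem~\ref{gfg}, Lemma~\ref{pairinggf}, and the chain-map property of $D$ together with the cocycle property of $\xi$ for compatibility with the cap product), with the identity $\epsilon(|\lambda|+1)(-1)^{|\lambda|+1}=\epsilon(|\lambda|)$ handling the signs exactly as in the paper; the only difference is that you run the computation from $(b+uB)^*I_\xi(\lambda)$ rather than from $I_\xi\bigl((\partial_{Lie}+\partial_\LL)\lambda\bigr)$, which is immaterial.
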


\begin{proof}
	\leavevmode
	
Using Theorem \ref{gfg} and Lemma \ref{pairinggf}  we have
\begin{multline*}
I_\xi  ((\partial_{Lie}+(-1)^r\partial_{\LL})\lambda))(a) = \epsilon(|\lambda|+1)\langle GF_\Gamma((\partial_{Lie}+(-1)^r\partial_{\LL})\lambda)),D(a) \cap \xi \rangle =\\
\epsilon(|\lambda|+1)\langle(\tilde{\nabla}_F+\partial_{\mathbb{L}})GF_\Gamma(\lambda), D(a) \cap \xi \rangle= (-1)^{|\lambda|+1}\epsilon(|\lambda|+1)\langle GF_\Gamma(\lambda), (\delta_\Gamma +b+uB) (D(a)) \cap \xi \rangle=\\
 \epsilon(|\lambda|)\langle GF_\Gamma(\lambda),  (D((b+uB)a)) \cap \xi \rangle =I_\xi  (\lambda)((b+uB)a)
\end{multline*}
and the statement follows.
\end{proof}

\begin{remark} The induced map on cohomology $ I_\xi\colon \CH^\sbullet(\g,\lsp(2d);\mathbb{L}^\sbullet)\longrightarrow \HC^{\sbullet+|\xi|}_{per}(\AHc\rtimes\Gamma)$ is easily seen to depend only on the cohomology class $[\xi]\in \CH^\sbullet(\Gamma, \C)$.
\end{remark}

\section{Evaluation of the equivariant classes}\label{6}

In the previous sections we defined the map
\begin{equation*} I_\xi\colon \CH^0(\g,\lsp(2d);\mathbb{L}^\sbullet)\longrightarrow \HC^{k}_{per}(\AHc\rtimes\Gamma),\end{equation*} where $k= |\xi|$.
The last step in proving the main result of this paper is to evaluate the classes appearing in Lie  algebraic index theorem \ref{UAIT}.

First of all we consider the image under $I_\xi$ of the trace density $\hat{\tau}_a$.
Consider   the map
\begin{equation*}
\langle GF_\Gamma(\hat{\tau}_a), \cdot \rangle \colon C_0(\Gamma;C_0(\AHc))\longrightarrow \C[\hbar^{-1},\hbar\rrbracket.
\end{equation*}
Since in degree $0$  the equivariant Gelfand--Fuks map is given by the ordinary Gelfand--Fuks map on $M$, this map coincides with the canonical trace $Tr$ (cf. the proof of theorem~\ref{thm:AIT}). It follows that
\[
\langle GF_\Gamma(\hat{\tau}_a), \alpha \otimes (\gamma_1\otimes \ldots \gamma_k)  \cap \xi \rangle = \xi (\gamma_1, \ldots, \gamma_k) Tr (\alpha)
\]
From this discussion we obtain the following:
\begin{proposition}\label{prop:trx}
We have $I_\xi (\hat{\tau}_a) = Tr_\xi$ where $Tr_\xi$ is a  cocycle
on $\AHc(M)\rtimes\Gamma$ given by

\begin{equation}
Tr_\xi(a_0\gamma_0\otimes\ldots\otimes a_k\gamma_k)=\xi(\gamma_1,\ldots, \gamma_k)Tr(a_0\gamma_0(a_1)\ldots(\gamma_0\gamma_1\ldots\gamma_{k-1}(a_k))
\end{equation}

if $\gamma_0\gamma_1\ldots\gamma_k=e$ and $0$ otherwise.

\end{proposition}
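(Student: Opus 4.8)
## Proof proposal

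The plan is to compute $I_\xi(\hat{\tau}_a)$ directly from its definition \ref{C}, namely $I_\xi(\hat\tau_a)(a) = \epsilon(0)\langle GF_\Gamma(\hat\tau_a), D(a)\cap\xi\rangle$, and to identify the resulting functional on $\AHc(M)\rtimes\Gamma$ with the claimed formula for $Tr_\xi$. First I would unwind the three ingredients. The map $D$ projects a cyclic chain of $\AH(M)\rtimes\Gamma$ onto the homogeneous summand supported at $e$ and then applies the quasi-isomorphism $CC^{per}_\sbullet(\AH\rtimes\Gamma)_e \simeq C_\sbullet(\Gamma; CC^{per}_\sbullet(\AH))$ of Theorem \ref{thm:A13}; for a chain $a_0\gamma_0\otimes\cdots\otimes a_k\gamma_k$ with $\gamma_0\cdots\gamma_k = e$ this produces (up to the standard Eilenberg--Zilber/homogeneous-to-inhomogeneous bookkeeping) the element $a_0\gamma_0(a_1)\cdots(\gamma_0\cdots\gamma_{k-1})(a_k)\otimes(\gamma_1\otimes\cdots\otimes\gamma_k)$ of $CC^{per}_0(\AH)\otimes(\C\Gamma)^{\otimes k}$, i.e. the cyclic $0$-chain in the fibre is exactly the "twisted product" appearing in the statement. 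The cap product with $\xi\in C^k(\Gamma,\C)$ then simply multiplies by the scalar $\xi(\gamma_1,\ldots,\gamma_k)$ while contracting away the group-simplex degree, landing us in $C_0(\Gamma; C_0(\AHc))$.

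The heart of the argument is then the evaluation of $\langle GF_\Gamma(\hat\tau_a), \beta\rangle$ for $\beta\in C_0(\Gamma; C_0(\AHc))$. Here I would invoke the key structural fact already recorded in the paper: in simplicial degree $0$ the equivariant Gelfand--Fuks map $GF_\Gamma$ reduces to the ordinary Gelfand--Fuks map $GF$ on $M$ built from the section $F_0 = F_r$, and correspondingly the pairing in Definition \ref{pairing} reduces to $\int_M T_0\big(GF(\hat\tau_a)(J^\infty_{F_0}(a))\big)$ — exactly the functional $C_{\hat\tau_a}$ analysed in Section \ref{2.1}. By the Proposition identifying $C_{\hat\tau_a} = Tr$ (whose proof sketch is given right after Theorem \ref{thm:AIT}), this functional is precisely the canonical normalized trace $Tr$ on $\AHc(M)$. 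Therefore
\[
\langle GF_\Gamma(\hat\tau_a),\ \alpha\otimes(\gamma_1\otimes\cdots\otimes\gamma_k)\cap\xi\rangle = \xi(\gamma_1,\ldots,\gamma_k)\, Tr(\alpha),
\]
and substituting $\alpha = a_0\gamma_0(a_1)\cdots(\gamma_0\cdots\gamma_{k-1})(a_k)$ gives the stated formula, with the vanishing outside $\gamma_0\cdots\gamma_k = e$ coming from the projection onto the homogeneous summand. Finally, since $\hat\tau_a$ is a cocycle in $C^\sbullet_{Lie}(\g,\lsp(2d);\LL^\sbullet)$ (Lemma defining $\hat\tau_a$), Proposition \ref{rly} guarantees that $I_\xi(\hat\tau_a)$ is a cyclic cocycle, so the resulting $Tr_\xi$ is indeed a periodic cyclic cocycle on $\AHc(M)\rtimes\Gamma$ of degree $k$.

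The main obstacle I anticipate is bookkeeping rather than conceptual: carefully tracking the signs $\epsilon(|\lambda|)$, the conventions in the homogeneous-versus-inhomogeneous bar resolution underlying Theorem \ref{thm:A13}, and the precise shape of the quasi-isomorphism $D$ — in particular verifying that $D$ applied to $a_0\gamma_0\otimes\cdots\otimes a_k\gamma_k$ really yields the group chain $(\gamma_1,\ldots,\gamma_k)$ with coefficient the twisted product $a_0\,\gamma_0(a_1)\cdots(\gamma_0\cdots\gamma_{k-1})(a_k)$ in cyclic degree $0$, and that the $b+uB$ differential contributes nothing extra at this degree after the projection. Once that dictionary is pinned down (it is the standard one, and the relevant computation is essentially the one behind the classical cyclic cocycles $Tr_\xi$ on crossed products), the identification is immediate from the degree-zero reduction to the non-equivariant algebraic index theorem. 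I would present the proof as: (1) reduce to degree zero and recall $GF_\Gamma|_{\deg 0} = GF$; (2) cite $C_{\hat\tau_a} = Tr$; (3) unwind $D$ and $\cap\xi$; (4) read off the formula and cite Proposition \ref{rly} for the cocycle property.
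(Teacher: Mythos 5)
Your proposal follows essentially the same route as the paper: the paper's argument is precisely the observation that in simplicial degree zero $GF_\Gamma$ reduces to the ordinary Gelfand--Fuks map, so that $\langle GF_\Gamma(\hat{\tau}_a),\cdot\rangle$ on $C_0(\Gamma;C_0(\AHc))$ coincides with the canonical trace $Tr$ (via the earlier identification $C_{\hat{\tau}_a}=Tr$), after which the cap product with $\xi$ and the unwinding of $D$ on the homogeneous summand yield the stated formula. Your version merely makes the bookkeeping of $D$ and the cap product more explicit than the paper does, so it is correct and not a genuinely different proof.
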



\begin{definition}
 The equivariant Weyl curvature $\theta_\Gamma$ is defined as the image of $\hat{\theta}$ under $GF_\Gamma$ followed by ($\Ch$-linear extension of)
 the map in Theorem \ref{Borel=Borel}. Similarly, the equivariant $\hat{A}$-genus of $M$, denoted $\hat{A}(M)_\Gamma$, is defined as the image of $\hat{A}$ under the equivariant Gelfand--Fuks map
 followed by ($\Ch$-linear extension of) the isomorphism in Theorem \ref{Borel=Borel}.
\end{definition}

\begin{example}
	Let us provide an example of the characteristic class $\theta_\Gamma$. To do this consider the example of group actions on deformation quantization given in \cite{classif}. Namely, we consider the symplectic manifold $\R^2/\Z^2=\T^2$, the $2$-torus, with the  symplectic structure $\omega=dy\wedge dx$ induced from the standard one on $\R^2$, where $x$, $y \in \R/\Z$ are the standard coordinates on $\T^2$. We then consider the action of $\Z$ on $\T^2$ by symplectomorphisms where the generator of $\Z$ acts by $T\colon (x,y)\mapsto (x+x_0,y+y_0)$. Note that, for a generic pair $(x_0,y_0)$, the quotient space is not Hausdorff.
	
	%
	
%
	%
	
The Fedosov connection $\nabla_F$ given as in Example \ref{loctriv} descends to the connection on $\T^2$ which is, moreover, $\Z$-invariant (where we endow $C^\infty(\T^2, \WW)$ with the action of $\Z$ induced by the symplectic action on $\T^2$).  It follows that $\AH = \Ker \nabla_F$ is a $\Z$-equivariant deformation
with the characteristic class $\frac{\omega}{i\hbar}$.

We can  obtain a more interesting example by modifying the previous one as follows (cf. \cite{classif}). Let $u \in C^\infty(\T^2, \WW)$ be an invertible element such that $u^{-1} (\nabla_F u)$ is central. Define a new action of $\Z$ on $C^\infty(\T^2, \WW)$ where the generator acts by
\[
w \mapsto u^{-1}(T w) u.
\]
$\Ker \nabla_F$ is again invariant under this action and we thus obtain an action of $\Z$ on $\AH$.

To describe its characteristic class note that, since $E \Z \cong \R$, we find that the cohomology
$\CH^\sbullet_\Z(\T^2) =  \CH^\sbullet(\R \times_\Z \T^2 ) \cong \CH^\sbullet(\T^3)$.
Let $\nu$ be a  compactly supported $1$-form on $\R$ with  $\int_\R \nu =1$. Denote by $\tau$
 the translation $t\rightarrow t-1$. Then
\[
 \tilde{\alpha}=\sum_{n\in \Z}(\tau^*)^n(\nu)\wedge (T^*)^n (U^{-1}\nabla_FU)
\]
 is a $\Z$-invariant form on $\R\times\T^2$, hence a lift of a form, say $\alpha$, on $\R\times_{\Z}\T^2=\T^3$. The characteristic class of the associated $\Z$-equivariant deformation is  equal to
 \begin{equation*}
 \theta_\Z=\frac{\omega}{i\hbar}+\alpha.
 \end{equation*}
\end{example}

Finally we arrive at the main theorem of this paper.
Let $\mathcal{R} \colon H^{even}_\Gamma(M) \to H^\sbullet_\Gamma(M)[u]$ be given by
\[\mathcal{R}(a) = u^{\deg a/2} a\]
and recall the morphism defined in \eqref{eq:Phi}
\begin{equation*}
\Phi\colon H^\sbullet_\Gamma(M)\longrightarrow HC^\sbullet_{per}(C_c^\infty(M)\rtimes\Gamma).
\end{equation*}
\begin{theorem}[Equivariant Algebraic Index Theorem]\label{EAIT}

\leavevmode

 Suppose $a\in CC_0^{per}(\AHc\rtimes\Gamma )$ is a cycle, then we have
 \begin{equation*}Tr_\xi(a)=\left\langle \Phi\left(\mathcal{R}\left(\hat{A}(M)_\Gamma e^{\theta_\Gamma}\right)[\xi]\right), \sigma(a)\right\rangle\end{equation*}
 where $\langle\cdot,\cdot\rangle$ denotes the pairing of $CC^\sbullet_{per}$ and $CC_\sbullet^{per}$.
\end{theorem}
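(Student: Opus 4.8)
The strategy is to combine the three main ingredients already assembled in the paper: the Lie Algebraic Index Theorem (Theorem~\ref{UAIT}), the morphism of complexes $I_\xi$ (Proposition~\ref{rly}), and the identification $I_\xi(\hat{\tau}_a)=Tr_\xi$ (Proposition~\ref{prop:trx}). The equality we must prove relates the value of the cyclic cocycle $Tr_\xi$ on a cycle $a\in CC_0^{per}(\AHc\rtimes\Gamma)$ to a pairing of an equivariant characteristic class with the principal symbol $\sigma(a)$, so the proof proceeds by rewriting the left-hand side using $I_\xi(\hat{\tau}_a)$, substituting the Lie-algebraic formula $\hat{\tau}_a=\sum_{p\geq 0}[\hat{A}_fe^{\hat{\theta}}]_{2p}u^p\hat{\tau}_t$, and then recognizing $I_\xi([\hat{A}_fe^{\hat{\theta}}]_{2p}u^p\hat{\tau}_t)$ as the claimed topological expression.

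First I would observe that, since $a$ is a cycle, Proposition~\ref{prop:trx} gives $Tr_\xi(a)=I_\xi(\hat{\tau}_a)(a)$, and by Proposition~\ref{rly} the map $I_\xi$ descends to cohomology, so we are free to replace $\hat{\tau}_a$ by the cohomologous cochain provided by Theorem~\ref{UAIT}. Thus $Tr_\xi(a)=I_\xi\!\left(\sum_{p\geq0}[\hat{A}_fe^{\hat{\theta}}]_{2p}u^p\hat{\tau}_t\right)(a)$. The next step is to unwind the definition of $I_\xi$ on a product of a Chern--Weil type class with $\hat{\tau}_t$: the Gelfand--Fuks map $GF_\Gamma$ is multiplicative for the cup product of de Rham--Sullivan forms, so $GF_\Gamma([\hat{A}_fe^{\hat{\theta}}]_{2p}u^p\hat{\tau}_t)$ factors as the wedge product of $GF_\Gamma$ of the characteristic class --- which by the definitions preceding the theorem represents $\mathcal{R}(\hat{A}(M)_\Gamma e^{\theta_\Gamma})$ under Theorem~\ref{Borel=Borel} --- with $GF_\Gamma(\hat{\tau}_t)$. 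The cup with $\xi$ accounts for the insertion of $[\xi]$ and the shift in cyclic degree.

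It then remains to identify the contribution of $\hat{\tau}_t$. In the non-equivariant case this is exactly the content of the Proposition computing $C_{\hat{\tau}_t}$, which produces the Hochschild--Kostant--Rosenberg representative $HKR(\sigma(\cdot))$ of the image of the cyclic chain in de Rham cohomology; here I would verify that in degree zero the equivariant Gelfand--Fuks map reduces to the ordinary one fiberwise over $M$, exactly as was used for $\hat{\tau}_a$ in the discussion preceding Proposition~\ref{prop:trx}, so that pairing $GF_\Gamma(\hat{\tau}_t)$ with $D(a)\cap\xi$ yields precisely the Connes map $\Phi$ applied to the class of $\sigma(a)$. Assembling these pieces, the wedge of $\mathcal{R}(\hat{A}(M)_\Gamma e^{\theta_\Gamma})[\xi]$ with the $\Phi$-image of $\sigma(a)$ is exactly the right-hand side, using that $\Phi$ is compatible with cup products of equivariant cohomology classes and with the pairing between $CC^\sbullet_{per}$ and $CC_\sbullet^{per}$.

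\textbf{Main obstacle.}
The delicate point is the compatibility of all the identifications at the level of \emph{complexes} rather than just cohomology: one must check that $GF_\Gamma$ is genuinely multiplicative with respect to the product on de Rham--Sullivan forms with Weyl-algebra coefficients (so that the factorization of $GF_\Gamma$ on a product class is legitimate), and that the pairing $\langle\cdot,\cdot\rangle$ of Definition~\ref{pairing}, built from $T$, the jet map $J^\infty_{F_p}$, and integration over $\Delta^p\times M\times g_1\times\cdots\times g_p$, intertwines this wedge product with the cap product against $\xi$ and the cyclic pairing, up to the sign $\epsilon(|\lambda|)$. Equivalently, one needs that $T$ (the quasi-inverse of $\iota$) can be chosen multiplicatively, or at least that the relevant diagram commutes up to chain homotopy in a way compatible with $I_\xi$ being a chain map; tracking the signs and the $u$-shift $\mathcal{R}$ through the bidegree conventions is where the real work lies. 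Once this bookkeeping is in place, the theorem follows formally from Theorems~\ref{UAIT}, \ref{gfg}, \ref{Borel=Borel} and Propositions~\ref{rly}, \ref{prop:trx}.
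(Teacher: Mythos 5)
Your proposal is correct and follows essentially the same route as the paper: apply the chain map $I_\xi$ to the Lie algebraic index theorem \ref{UAIT}, use Proposition \ref{prop:trx} to identify $I_\xi(\hat{\tau}_a)$ with $Tr_\xi$, and recognize $I_\xi$ of the right-hand side as $\Phi\left(\mathcal{R}\left(\hat{A}(M)_\Gamma e^{\theta_\Gamma}\right)[\xi]\right)$ via the map \eqref{eq:Phi}. The "main obstacle" you flag (multiplicativity/homotopy bookkeeping in identifying $I_\xi$ of the product class) is precisely the step the paper's own proof leaves implicit, so your elaboration is consistent with, and somewhat more detailed than, the published argument.
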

\begin{proof}
The theorem follows from Theorem \ref{UAIT} by applying the morphism $I_\xi$. The image of $\tau_a$ under $I_\xi$ is $Tr_\xi$ (cf. Proposition \ref{prop:trx}). On the other hand, by equation \eqref{eq:Phi},
\[
\left[I_\xi\left(\sum_{p\geq 0}\left(\hat{A}_fe^{\hat{\theta}}\right)_{2p}u^p\hat{\tau}_t\right)\right] = \Phi\left(\mathcal{R}\left(\hat{A}(M)_\Gamma e^{\theta_\Gamma}\right)[\xi]\right).
\]
\end{proof}
Note that the form of the theorem \ref{Mainresult} stated in the introduction follows by considering the pairing of periodic cyclic cohomology and $K$-theory using the Chern--Connes character \cite{Loday}.


\appendix
\appendixpage

Below we shall fix our conventions with regard to cyclic/simplicial structures and homologies.
We will also define the complexes we use to describe group (co)homology, Lie algebra cohomology and cyclic (co)homology. The general reference for this section is \cite{Loday}.

Fix a field $k$ of characteristic $0$.

\section{Cyclic/simplicial structure}

Let $\Lambda$ denote the cyclic category. Instead of giving the intuitive definition let us simply give a particularly useful presentation. The cyclic category
$\Lambda$ has objects $[n]$ for each $n\in\Z_{\geq 0}$ and is generated by

\begin{align*}
 \delta_i^n&\in\Hom([n-1],[n])&\mbox{and}&\hspace{0.3cm}\sigma_i^n\in\Hom([n+1],[n]) &\hspace{0.5cm}\mbox{for}\hspace{0.2cm} 0\leq i\leq n\\
 t_n&\in\Hom([n],[n])& & & \hspace{0.3cm}\mbox{for all}\hspace{0.3cm}n\in\Z_{\geq0}
\end{align*}

 with the relations
 \begin{align*}
  \delta_j^n\circ\delta_i^{n-1}&=\delta_i^n\circ\delta_{j-1}^{n-1}&\text{if  }& i<j&\sigma_j^n\circ\sigma_i^{n+1}&= \sigma^n_i\circ\sigma^{n+1}_{j+1}&\text{if  }&i\leq j&\\
  \sigma_j^n\circ\delta_i^{n+1}&=\delta_i^n\circ\sigma_{j-1}^{n-1}&\text{if  }&i<j&\sigma_j^n\circ\delta_i^{n+1}&=\Id_{[n]}&\text{if  }&i=j,j+1&\\
  \sigma_j^n\circ\delta_i^{n+1}&=\delta^n_{i-1}\circ\sigma_j^{n-1}&\text{if  }&i>j+1&t_n^{n+1}&=\Id_{[n]}&\\
  t_n\circ\delta_i^n&=\delta_{i+1}^n\circ t_{n-1}&\text{if  }&0\leq i< n&t_n\circ\delta^n_n&= \delta_0^n&\\
  t_n\circ \sigma^n_i&=\sigma_{i+1}^n\circ t_{n+1}&\text{if  }& 0\leq i<n&t_n\circ\sigma_n^n&=\sigma_0^n\circ t_{n+1}\circ t_{n+1}.&\\
 \end{align*}

Using only the generators $\delta_i^n$ and $\sigma_i^n$ and relations not involving $t_n$'s gives a presentation of the simplicial category $\triangle$. A contravariant
functor from $\Lambda$ ($\triangle$) to the category of $k$-modules is called a cyclic (simplicial) $k$-module.

\begin{definition}
Given a unital associative $k$-algebra $A$ we shall denote by $A^\natural$ the functor $\Lambda^{op}\rightarrow k-Mod$ given by
$A^\natural([n])=A^{\otimes n+1}$ and
\begin{align*}
 \delta^n_i(a_0\otimes\ldots\otimes a_n)=&a_0\otimes \ldots\otimes a_ia_{i+1}\otimes\ldots\otimes a_n&\hspace{0.3cm}\mbox{if}\hspace{0.3cm} 0\leq i<n \\
 \delta^n_n(a_0\otimes\ldots\otimes a_n)=&a_na_0\otimes a_1\otimes\ldots\otimes a_{n-1}& \\
 \sigma_i^n(a_0\otimes\ldots\otimes a_n)=&a_0\otimes\ldots\otimes a_i\otimes 1\otimes a_{i+1}\otimes \ldots\otimes a_n&\hspace{0.3cm}\mbox{for all}\hspace{0.3cm}0\leq i\leq n\\
 t_n(a_0\otimes\ldots\otimes a_n)=&a_1\otimes\ldots\otimes a_n\otimes a_0& \\
\end{align*}
Note that if $A$ admits a group action of the group $G$ by unital algebra homomorphisms then $G$ also acts on $A^\natural$ (diagonally).
\end{definition}

\begin{definition}
Given a group $G$ we shall denote by $G^{k\natural}$ the functor $\Lambda^{op}\rightarrow k-Mod$ given by
$G^{k\natural}([n])=(kG)^{\otimes n+1}$ and
\begin{align*}
 \delta_i^n(g_0\otimes \ldots\otimes g_n)=&g_0\otimes \ldots\otimes \hat{g_i}\otimes \ldots\otimes g_n & \hspace{0.3cm}\mbox{for all} \hspace{0.3cm} 0\leq i\leq n \\
 \sigma_i^n(g_0\otimes \ldots\otimes g_n)=&g_0\otimes\ldots\otimes g_i\otimes g_i\otimes g_{i+1}\otimes\ldots\otimes g_n&\hspace{0.3cm}\mbox{for all}\hspace{0.3cm} 0\leq i\leq n\\
 t_n(g_0\otimes \ldots\otimes g_n)=&g_1\otimes g_2\otimes\ldots\otimes g_n\otimes g_0.& \\
\end{align*}
Note that $G$ acts on $G^{k\natural}$ from the right by
$g\cdot(g_0\otimes \ldots\otimes g_n)=g^{-1}g_0\otimes \ldots\otimes g^{-1}g_n$.
\end{definition}

\begin{definition}
Given two cyclic $k$-modules $A^\natural$ and $B^\natural$ we shall denote by $A\natural B$ the cyclic $k$-module given by
$A\natural B([n])=A^\natural([n])\otimes B^\natural([n])$ with the diagonal cyclic structure.
\end{definition}

\subsection{Cyclic homologies}

Given a cyclic $k$-module $M^\natural$ we can consider four different complexes associated to the simplicial/cyclic structure. To define them we shall
first define two operators: $b$ and $B$.

The first is induced through the Dold--Kan correspondence and uses only the simplicial structure. It is given by
\begin{equation*}b_n=\sum_{i=0}^n(-1)^i\delta_i^n\colon M^\natural([n])\longrightarrow M^\natural([n-1]).\end{equation*} By using the simplicial identities above
it is easily verified that $b_{n-1}b_n=0$. To define the ``Hochschild'' complex it is enough to have just the operators $b_n$.

To define the three cyclic complexes we shall use the operator
\begin{equation*}B_n=(t_{n+1}^{-1}+(-1)^n)\circ\sigma^n_n\circ\left(\sum_{i=0}^n(-1)^{in}t_n^i\right)\colon M^\natural([n])\longrightarrow M^\natural([n+1]).\end{equation*}
Note that $B_{n+1}B_n=0$ since
\begin{equation}\label{BB=0}\sum_{i=0}^{n+1}(-1)^{i(n+1)}t^i_{n+1}\circ(t_{n+1}^{-1}+(-1)^n)=\sum_{i=0}^{n+1}(-1)^{i(n+1)}(t_{n+1}^{i-1}+(-1)^nt_{n+1}^i)=0.\end{equation}
Vanishing of the above expression follows since the sum telescopes except for the first term $t_{n+1}^{-1}$ and the last term $-(-1)^{n(n+1)}t_{n+1}^{n+1}$, which also
cancel each other. Note also that
\begin{equation}\label{bB=Bb}b_{n+1}B_n+ B_{n-1}b_n=0,\end{equation}
this can be seen by writing out both operators as sums of operators in the normal form $\delta_k^n\sigma_l^{n-1}t_n^i$.

From now on we will drop the subscripts of the $b$ and $B$ operators. The cyclic module $M^\natural$ gives rise to a graded module $\{M^\natural_n\}_{n\in\Z_{\geq 0}}$ by
$M^\natural_n=M^\natural([n])$. Then we see that the operator $b$ turns $M^\natural$ into a chain complex.
\begin{definition}
 The \emph{Hochschild complex} $(C^{Hoch}_\bullet(M^\natural),b)$ of the cyclic module $M^\natural$ is defined as
 $C^{Hoch}_n(M^\natural):=M^\natural([n])$ equipped with the boundary operator $b$ (of degree $-1$). The corresponding homology shall be denoted
 $\CH\CH_\bullet(M^\natural)$.
\end{definition}

Note that we have not used the full cyclic structure of $M^\natural$ to construct the Hochschild complex. In fact one can form the Hochschild complex
$\left(C_\sbullet(M^\triangle),b\right)$ of any simplicial $k$-module $M^{\triangle}$ in exactly the same way.

Note that by \eqref{bB=Bb} and \eqref{BB=0} we find that $(b+B)^2=0$. This  implies that we could consider a certain double complex with columns given by the Hochschild complex. Note
however that, if $b$ is of degree $-1$ on the Hochschild complex, the operator $B$ is naturally of degree $+1$. We can consider a new grading for which the operator
$b+B$ is of homogeneous degree $-1$. In order to make this grading easy to see, it will be useful to introduce the formal variable $u$ of degree $-2$.
This leads us to several choices of double complexes.

\begin{definition}
 We define the \emph{cyclic} complex by
\begin{equation*}(CC_\bullet(M^\natural),\delta^\natural):=\left(\bigslant{C_\bullet^{Hoch}(M^\natural)[u^{-1},u\rrbracket}{C_\bullet^{Hoch}(M^\natural)\llbracket u\rrbracket}, b+uB\right),\end{equation*}
the \emph{negative cyclic} complex by
\begin{equation*}(CC^{-}_\bullet(M^\natural),\delta^\natural_{-}):=\left(C_\bullet^{Hoch}(M^\natural)\llbracket u\rrbracket,b+uB\right)\end{equation*}
and finally the \emph{periodic cyclic} complex by
\begin{equation*}(CC^{per}_\bullet(M^\natural),\delta^\natural_{per}):=\left(C_\bullet^{Hoch}(M^\natural)[u^{-1}, u\rrbracket,b+uB\right).\end{equation*}
Here $u$ denotes a formal variable of degree $-2$.
 The corresponding homologies will be denoted $\HC_\bullet(M^\natural)$, $\HC^{-}_\bullet(M^\natural)$ and $\HC^{per}_\bullet(M^\natural)$ respectively.
 The  cyclic cochain complexes $CC_{per}^\sbullet(M^\natural)$, $CC_{-}^\sbullet(M^\natural)$ and $CC^{\sbullet}(M^\natural)$ are defined as the $k$-duals
 of the chain complexes.
\end{definition}

We shall often omit the superscripts $\natural$ when there can be no confusion as to what the cyclic structures are.

\begin{remark}\label{HHtoCH}
Note that every ``flavor'' of cyclic homology comes equipped with spectral sequences induced from the fact that they are realized as totalizations of
a double complex. The double complex corresponding to cyclic homology is bounded (second octant) and therefore the spectral sequence which starts by taking homology on
columns converges to $\HC_\bullet$. The negative (or periodic) cyclic double complex is unbounded, but concentrated in the (second,) third, fourth and fifth  octant. This
means that the spectral sequence starting with taking homology in the columns converges again to $\HC^-_\bullet$ (or $\HC^{per}_\bullet$). Note however that in this case
the negative (or periodic) cyclic homology is given by the product totalization.
\end{remark}

The remark \ref{HHtoCH} provides the proof of the following proposition.

\begin{proposition}\label{Hochsuf}
 Suppose $M^\natural$ and $N^\natural$ are two cyclic $k$-modules and $\phi\colon N^\natural\longrightarrow M^\natural$ is a map of cyclic modules that induces
 an isomorphism on Hochschild homologies. Then $\phi$ induces an isomorphism on cyclic, negative cyclic and periodic cyclic homologies as well.
\end{proposition}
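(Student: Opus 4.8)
The plan is to deduce the statement from the comparison theorem for spectral sequences, exactly as signalled by Remark \ref{HHtoCH}. First I would note that since $\phi\colon N^\natural\to M^\natural$ is a morphism of cyclic $k$-modules it commutes with the simplicial faces and degeneracies and with the cyclic operators, hence with both $b$ and $B$. Extending $u$-linearly, $\phi$ therefore induces a morphism of double complexes from $C^{Hoch}_\sbullet(N^\natural)[u^{-1},u\rrbracket$ to $C^{Hoch}_\sbullet(M^\natural)[u^{-1},u\rrbracket$, and likewise for the $\llbracket u\rrbracket$-subcomplex and the $[u^{-1},u\rrbracket/\llbracket u\rrbracket$-quotient. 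In each case this morphism is strictly compatible with the decreasing filtration by powers of $u$ (equivalently, the filtration by columns).

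Next I would compute the $E_1$-page of the resulting column spectral sequence: taking homology with respect to $b$ in each column produces, in every $u$-degree, a copy of the Hochschild homology $\CH\CH_\sbullet(N^\natural)$ (resp. $\CH\CH_\sbullet(M^\natural)$). By hypothesis $\phi$ induces an isomorphism on Hochschild homology, so the induced map on $E_1$-pages is an isomorphism, and hence so is the map on all subsequent pages $E_r$. Invoking the comparison theorem for filtered complexes then yields that $\phi$ is an isomorphism on the abutment, \emph{provided} the two spectral sequences actually converge to $\HC_\sbullet$, $\HC^-_\sbullet$, $\HC^{per}_\sbullet$ respectively.

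The main — indeed the only — point requiring care is this convergence, and it is precisely what Remark \ref{HHtoCH} records. For ordinary cyclic homology the relevant double complex is bounded (concentrated in the second octant), so the column spectral sequence converges and the above argument applies verbatim. For negative cyclic and periodic cyclic homology the double complex is unbounded; here one must form $\HC^-_\sbullet$ and $\HC^{per}_\sbullet$ using the \emph{product} totalization, and then, because the complex is concentrated in a half-plane (the third, fourth and fifth octants for the periodic case, with the analogous statement for the negative case), the filtration by columns is complete and exhaustive and the column spectral sequence converges conditionally and strongly to $\HC^-_\sbullet$ (resp. $\HC^{per}_\sbullet$). With convergence in hand for all three theories, the isomorphism on $E_1$-pages propagates to an isomorphism of the homologies, which is the assertion of the proposition.
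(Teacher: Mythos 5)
Your argument is correct and is essentially the paper's own proof: the paper likewise invokes the column (powers-of-$u$) spectral sequences of Remark \ref{HHtoCH}, notes that the Hochschild isomorphism gives an isomorphism on first pages, and concludes by convergence (boundedness for $\HC_\sbullet$, half-plane concentration with the product totalization for $\HC^-_\sbullet$ and $\HC^{per}_\sbullet$). You have merely spelled out the same comparison-of-spectral-sequences argument in more detail.
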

\begin{proof}

 \leavevmode

 \noindent The proof follows since $\phi$ induces isomorphisms on the first pages of the relevant spectral sequences, which converge.
\end{proof}

\subsection{Replacements for cyclic complexes}
It will often be useful to consider different complexes that compute the various cyclic homologies. We shall give definitions of the complexes that are used in the
main body of the article here.

\subsubsection{Crossed product}
Suppose $A$ is a unital $k$-algebra and $G$ is a group acting on the left by unital algebra homomorphisms. We denote by $A\rtimes G$ the crossed product algebra
given by $A\otimes kG$ as a $k$-vector space and by the multiplication rule $(ag)(bh)=ag(b)gh$ for all $a,b\in A$ and $g,h\in G$. Note that the cyclic structure of
$(A\rtimes G)^\natural$ splits over the conjugacy classes of $G$. Namely, given a tensor $a_0g_0\otimes a_1g_1\otimes \ldots\otimes a_ng_n$, the conjugacy class of the
product $g_0\cdot\ldots\cdot g_n$ is invariant under $\delta^n_i$, $\sigma^n_i$ and $t_n$ for all $i$ and $n$. So we have
\begin{equation*}(A\rtimes G)^\natural=\bigoplus_{x\in\langle G\rangle}(A\rtimes G)^\natural_x\end{equation*} where we denote the set of conjugacy classes of $G$ by $\langle G\rangle$ and the span of all tensors $a_0g_0\otimes\ldots\otimes a_ng_n$ such that $g_0\cdot\ldots\cdot g_n\in x$ by
$(A\rtimes G)^\natural_x$.
The summand $(A\rtimes G)^\natural_e$, here $e=\{e\}$ the conjugacy class of the neutral element, is called the homogeneous summand.

We shall use the specialized notation $A\natural G:=A^\natural\natural G^{k\natural}$. Note that $A\natural G$ carries a right $G$ action given by the diagonal action (the left action on $A$ is converted to a right action by inversion, i.e.
$G\simeq G^{op}$). Thus the co-invariants $(A\natural G)_G=\bigslant{A\natural G}{\langle a-g(a)\rangle}$ form another cyclic $k$-module.

\begin{proposition}\label{above3}
 The homogeneous summand of $(A\rtimes G)^\natural$ is isomorphic to the co-invariants of $A\natural G$.
 \begin{equation*}(A\rtimes G)^\natural_e\stackrel{\sim}{\longrightarrow} (A\natural G)_G.\end{equation*}
\end{proposition}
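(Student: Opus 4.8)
The plan is to write down the isomorphism explicitly and check that it commutes with all the cyclic operators $\delta_i^n$, $\sigma_i^n$, $t_n$. Recall that $(A\rtimes G)^\natural_e([n])$ is spanned by tensors $a_0g_0\otimes\ldots\otimes a_ng_n$ with $g_0g_1\cdots g_n=e$, while $(A\natural G)([n])=(A^{\otimes n+1})\otimes((kG)^{\otimes n+1})$, and $(A\natural G)_G$ is the quotient by the diagonal $G$-action $g\cdot(a_0\otimes\ldots\otimes a_n\otimes h_0\otimes\ldots\otimes h_n)=(g(a_0)\otimes\ldots\otimes g(a_n)\otimes gh_0\otimes\ldots\otimes gh_n)$ (using $G\simeq G^{op}$ to convert the left action on $A$ to a right action). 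The map I would use sends
\begin{equation*}
a_0g_0\otimes a_1g_1\otimes\ldots\otimes a_ng_n \longmapsto \left[a_0\otimes g_0(a_1)\otimes (g_0g_1)(a_2)\otimes\ldots\otimes(g_0\cdots g_{n-1})(a_n)\ \otimes\ e\otimes g_0^{-1}\otimes (g_0g_1)^{-1}\otimes\ldots\otimes(g_0\cdots g_{n-1})^{-1}\right],
\end{equation*}
where on the right I record the $A$-part by "sliding" the group elements through (so that the $i$-th slot carries $(g_0\cdots g_{i-1})(a_i)$) and the $G$-part by $h_i=(g_0\cdots g_{i-1})^{-1}$. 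Note $h_0=e$ and the tuple $(h_0,\ldots,h_n)$ is, up to the diagonal action, an arbitrary tuple; this is exactly the standard "homogeneous vs.\ inhomogeneous" bar-type reparametrization. The constraint $g_0\cdots g_n=e$ corresponds to nothing extra on the right — it is automatically satisfied once we only remember $h_1,\ldots,h_n$ — which is why the target is the coinvariants and not a subcomplex.

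First I would write down the inverse map: given a representative $a_0\otimes\ldots\otimes a_n\otimes h_0\otimes\ldots\otimes h_n$, normalize using the $G$-action so that $h_0=e$, then send it to $a_0(h_1^{-1})\,h_1 \otimes (a_1 \text{ appropriately twisted})\,h_1^{-1}h_2\otimes\ldots$; more cleanly, set $g_i=h_i^{-1}h_{i+1}$ for $0\le i<n$ and $g_n=h_n^{-1}$ (so $g_0\cdots g_n=h_0^{-1}=e$), and recover the $a$'s by inverting the telescoping twist, $a_i = h_i(\tilde a_i)$ where $\tilde a_i$ is the $i$-th entry of the normalized representative. One checks these are mutually inverse $k$-linear bijections in each degree $n$; this is a routine but slightly fiddly bookkeeping exercise.

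Second, and this is the substance, I would verify that the map intertwines the cyclic structures. The cyclic operators on $(A\rtimes G)^\natural$ are the Hochschild-type face/degeneracy maps for the algebra $A\rtimes G$ together with the cyclic rotation $t_n$; on $A\natural G = A^\natural\natural G^{k\natural}$ they are the diagonal operators built from the $A^\natural$-structure (Definition of $A^\natural$) and the $G^{k\natural}$-structure (Definition of $G^{k\natural}$, where $\delta_i$ deletes $g_i$, $\sigma_i$ doubles it, and $t_n$ rotates). The matching is: $\delta_i^n$ for $i<n$ on the left multiplies $a_i(g_i a_{i+1})$ and merges $g_ig_{i+1}$ — on the right, after the twist this becomes merging the $i$-th and $(i{+}1)$-th $A$-slots (note the twists are compatible: $(g_0\cdots g_{i-1})(a_i)\cdot(g_0\cdots g_i)(a_{i+1}) = (g_0\cdots g_{i-1})(a_i\cdot g_i(a_{i+1}))$) and deleting $h_{i+1}$, which is exactly $(\delta_i^{A^\natural}\otimes\delta_i^{G^{k\natural}})$. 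The last face $\delta_n^n$ and the cyclic operator $t_n$ are the interesting ones because they involve "wrapping around": here one uses $g_0\cdots g_n=e$ crucially, and the fact that we are in the coinvariants absorbs the leftover conjugation by $g_0\cdots g_n$ (respectively by $g_0$) that would otherwise appear. I would check $t_n$ carefully: rotating $a_0g_0\otimes\ldots\otimes a_ng_n$ to $a_1g_1\otimes\ldots\otimes a_ng_n\otimes a_0g_0$ should match, after the twist and modulo the diagonal action, the rotation $t_n$ on both tensor factors simultaneously; the discrepancy is precisely an application of the element $g_0$, which acts trivially on coinvariants. The degeneracies $\sigma_i^n$ insert a $1\in A$ and duplicate a group element, and match directly.

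The main obstacle is the third step: confirming compatibility with $\delta_n^n$ and $t_n$, where the closure relation $g_0\cdots g_n=e$ and the passage to coinvariants conspire to make everything work — one has to be scrupulous about which conjugations appear and verify they lie in the span of $\langle a-g(a)\rangle$. Everything else (the $i<n$ faces, the degeneracies, bijectivity) is mechanical. I would organize the verification by first treating $A^\natural$ and $G^{k\natural}$ separately and then observing that the diagonal structure forces the two bookkeeping twists to be the same one, so a single consistent choice of "slide the group elements to the left" handles all operators at once.
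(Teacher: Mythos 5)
Your overall strategy is the paper's: write an explicit degreewise bijection and check it intertwines the cyclic operators, the wrap-around operators $\delta_n^n$ and $t_n$ being the only delicate ones. However, the specific map you propose is not a morphism of cyclic modules into $(A\natural G)_G$, and the step you deferred as routine is exactly where it fails. The coinvariants are taken for the diagonal action which multiplies all group slots simultaneously on the \emph{left} (twisting the $A$-slots accordingly), so the only group data surviving the quotient are the left differences $h_i^{-1}h_{i+1}$. Your bookkeeping $h_i=(g_0\cdots g_{i-1})^{-1}$ encodes the $g_i$ as \emph{right} differences $h_i h_{i+1}^{-1}$, which are not invariants of simultaneous left translation; consequently the discrepancy in the $t_n$-check is right multiplication of the group slots by $g_0$ (equivalently the identification $(a_\sbullet)\natural(h_\sbullet)\sim(g_0(a_\sbullet))\natural(h_\sbullet g_0^{-1})$), which the coinvariants do \emph{not} absorb. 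Concretely, for $n=1$, $g_1=g_0^{-1}$: with your map $\Psi$, $t_1\Psi(a_0g_0\otimes a_1g_1)=(g_0(a_1)\otimes a_0)\natural(g_0^{-1}\otimes e)$, whose unique representative with first group slot $e$ is $(g_0^2(a_1)\otimes g_0(a_0))\natural(e\otimes g_0)$, whereas $\Psi(t_1(a_0g_0\otimes a_1g_1))=(a_1\otimes g_0^{-1}(a_0))\natural(e\otimes g_0)$; these differ unless $g_0^2$ acts trivially. Relatedly, your proposed inverse reconstructs $g_i=h_i^{-1}h_{i+1}$ from left differences and hence is not inverse to your forward map: already for $n=1$ the composite sends $a_0g_0\otimes a_1g_0^{-1}$ to $a_0g_0^{-1}\otimes a_1g_0$.

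The repair is small: drop the inversion in the group bookkeeping, e.g.\ take $h_i=g_0\cdots g_{i-1}$ while keeping $b_i=(g_0\cdots g_{i-1})(a_i)$, or use the paper's formula $a_0g_0\otimes\ldots\otimes a_ng_n\mapsto\bigl(g_0^{-1}(a_0)\otimes a_1\otimes g_1(a_2)\otimes\ldots\otimes(g_1\cdots g_{n-1})(a_n)\bigr)\natural\bigl(e\otimes g_1\otimes g_1g_2\otimes\ldots\otimes g_1\cdots g_n\bigr)$ with inverse $(a_\sbullet)\natural(k_\sbullet)\mapsto k_n^{-1}(a_0)\,k_n^{-1}k_0\otimes k_0^{-1}(a_1)\,k_0^{-1}k_1\otimes\ldots\otimes k_{n-1}^{-1}(a_n)\,k_{n-1}^{-1}k_n$. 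With either of these the group data is carried by left differences, the element $\gamma$ needed in the $t_n$- and $\delta_n^n$-checks really is a single simultaneous left translation (by $g_0$, resp.\ $g_1^{-1}$), and the rest of your outline goes through and coincides with the paper's argument.
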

\begin{proof}
 \leavevmode

 \noindent Consider the map given by
 \begin{equation*}a_0g_0\otimes\ldots\otimes a_ng_n\mapsto (g^{-1}_0(a_0)\otimes a_1\otimes g_1(a_2)\otimes \ldots\otimes g_1\ldots g_{n-1}(a_n))\natural
  (e\otimes g_1\otimes g_1g_2\otimes \ldots\otimes g_1\cdot\ldots\cdot g_n),
 \end{equation*}
 it is easily checked to commute with the cyclic structure and allows the inverse given by
 \begin{equation*}(a_0\otimes\ldots\otimes a_n)\natural(g_0\otimes \ldots\otimes g_n)\mapsto g_n^{-1}(a_0)g_n^{-1}g_0\otimes g_0^{-1}(a_1)g_0^{-1}g_1\otimes \ldots
  \otimes g_{n-1}^{-1}(a_n)g_{n-1}^{-1}g_n
 \end{equation*}
this last tensor can also be expressed as $g_n^{-1}a_0g_0\otimes g_0^{-1}a_1g_1\otimes \ldots \otimes g_{n-1}^{-1}a_ng_n$.
\end{proof}
\begin{definition}
 Suppose $(M_\bullet,\partial)$ is a right $kG$-chain complex. Then we define the group homology of $G$ with values in $M$ as
 \begin{equation*}(C_\bullet(G;M),\delta_{(G,M)}):=\Tot^{\scriptscriptstyle\prod}M_\bullet\otimes_{kG}C_\bullet^{Hoch}(G)\end{equation*}
 where we consider the tensor product of $kG$-chain complexes with the obvious structure of left $kG$-chain complex on $C_\bullet^{Hoch}(G)$. Note that
 this means that
 \begin{equation*}C_n(G;M)=\prod_{p+q=n}M_p\otimes_{kG}C^{Hoch}_q(G)\end{equation*} and
 \begin{equation*}\delta_{(G,M)}=\partial\otimes\Id+\Id\otimes b\end{equation*} where we use the Koszul sign convention.

\end{definition}

\begin{proposition}\label{above2}
 Suppose $M$ is a right $kG$-module. Then $M\otimes kG$ with the diagonal right action is a free $kG$-module.
\end{proposition}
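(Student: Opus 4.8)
The plan is to exhibit an explicit isomorphism of right $kG$-modules between $M\otimes kG$ equipped with the diagonal action and the same underlying vector space equipped with the action concentrated on the second tensor factor, and then observe that the latter is visibly free.

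Concretely, write the diagonal right action as $(m\otimes g)\cdot h=(m\cdot h)\otimes(gh)$, and let $M_0$ denote the vector space $M$ with the \emph{trivial} $G$-action, so that $M_0\otimes kG$ carries the right action $(m\otimes g)\cdot h=m\otimes(gh)$. First I would define
\[
\Psi\colon M\otimes kG\longrightarrow M_0\otimes kG,\qquad \Psi(m\otimes g)=(m\cdot g^{-1})\otimes g,
\]
with candidate inverse $\Psi^{-1}(m\otimes g)=(m\cdot g)\otimes g$. One checks $\Psi\circ\Psi^{-1}=\Psi^{-1}\circ\Psi=\id$ immediately, so $\Psi$ is a linear isomorphism. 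Then I would verify $kG$-equivariance: applying $\Psi$ to $(m\otimes g)\cdot h=(m\cdot h)\otimes(gh)$ gives $\big((m\cdot h)\cdot(gh)^{-1}\big)\otimes(gh)=(m\cdot g^{-1})\otimes(gh)$, which is exactly $\Psi(m\otimes g)\cdot h=\big((m\cdot g^{-1})\otimes g\big)\cdot h$ for the second-factor action. Hence $\Psi$ is an isomorphism of right $kG$-modules.

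Finally, choosing any $k$-basis $\{e_i\}_{i\in I}$ of $M$, the second-factor action makes $M_0\otimes kG=\bigoplus_{i\in I}\,(e_i\otimes kG)$ a direct sum of copies of $kG$ as a right $kG$-module, hence a free right $kG$-module; transporting this decomposition back through $\Psi^{-1}$ exhibits $M\otimes kG$ with the diagonal action as free, with basis $\{e_i\otimes e\}_{i\in I}$ (where $e\in G$ is the identity). There is no real obstacle here: the only point requiring any care is getting the inverses $g$ versus $g^{-1}$ in the right places so that $\Psi$ genuinely intertwines the diagonal action with the second-factor action, and checking this is the routine computation indicated above.
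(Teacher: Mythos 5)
Your proof is correct and is essentially the same as the paper's: the paper also uses the map $m\otimes g\mapsto mg^{-1}\otimes g$ onto the underlying $k$-module of $M$ tensored with $kG$ (with action on the second factor only), with inverse $m\otimes g\mapsto mg\otimes g$. You merely spell out the equivariance check and the choice of $k$-basis of $M$ that the paper leaves implicit.
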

\begin{proof}

 \leavevmode
 \noindent Let us denote the $k$-module underlying $M$  by $F(M)$, then $F(M)\otimes kG$ denotes the free (right) $kG$-module induced by the $k$-module underlying $M$.
 Consider the map
 \begin{equation*}M\otimes kG\longrightarrow F(M)\otimes kG\end{equation*} given by $m\otimes g\mapsto mg^{-1}\otimes g$. It is obviously a map of $kG$-modules and allows for the inverse
 $m\otimes g\mapsto mg\otimes g$.
\end{proof}

\begin{proposition}\label{above1}
 Suppose $F$ is a free right $kG$-module (we view it as a chain complex concentrated in degree $0$ with trivial differential) then there exists a contracting homotopy
 \begin{equation*}H_F\colon C_\bullet(G;F)\longrightarrow C_{\bullet +1}(G;F).\end{equation*}
 Suppose $(F_\bullet,\partial)$ is a quasi-free right $kG$-chain complex (i.e. $F_n$ is a free $kG$-module for all $n$) then the homotopies $H_{F_n}$ give rise to a
 quasi-isomorphism \begin{equation*}\left((F_\bullet)_G, \partial\right)\stackrel{\sim}{\longrightarrow} (C_\bullet(G;F),\delta_{(G,F)}).\end{equation*}
 \end{proposition}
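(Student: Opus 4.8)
The plan is to recognise $C^{Hoch}_\bullet(G)$ as the homogeneous bar resolution of $k$, transport its classical contraction to $F$, and then assemble the resulting row-contractions along the $F$-grading by the homological perturbation lemma.

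First I would note that $\left(C^{Hoch}_\bullet(G),b\right)$, with $C^{Hoch}_n(G)=(kG)^{\otimes n+1}$ and $b=\sum_{i=0}^n(-1)^i\delta_i^n$ the alternating sum of the entry-deleting face maps, is exactly the homogeneous bar resolution of $k$ over $kG$: each $C^{Hoch}_n(G)$ is a free left $kG$-module for the diagonal action $g\cdot(g_0\otimes\cdots\otimes g_n)=gg_0\otimes\cdots\otimes gg_n$, the augmentation $\epsilon\colon C^{Hoch}_0(G)=kG\to k$, $g\mapsto 1$, is a chain map, and the $k$-linear operator $s(g_0\otimes\cdots\otimes g_n)=e\otimes g_0\otimes\cdots\otimes g_n$ contracts the augmented complex $k\xleftarrow{\ \epsilon\ }C^{Hoch}_\bullet(G)$, i.e. $bs+sb=\Id-\eta\epsilon$ with $\eta(1)=e$ and $\eta\epsilon$ taken to be $0$ in positive degree. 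For the first statement, fix a $kG$-basis $\{v_i\}$ of $F$; since $F$ sits in degree $0$ the totalisation is trivial and $C_\bullet(G;F)=F\otimes_{kG}C^{Hoch}_\bullet(G)\cong\bigoplus_i C^{Hoch}_\bullet(G)$, where $v_ig\otimes x\mapsto g\cdot x$ in the $i$-th summand. Transporting $s$ summandwise gives a $k$-linear $H_F\colon C_\bullet(G;F)\to C_{\bullet+1}(G;F)$ with $\delta_{(G,F)}H_F+H_F\delta_{(G,F)}=\Id-\sigma_F\pi_F$, where $\pi_F=\Id\otimes\epsilon\colon C_\bullet(G;F)\to F_G$ and $\sigma_F\colon F_G\to C_0(G;F)$, $\overline{v_i}\mapsto v_i\otimes e$, is the induced section ($\pi_F\sigma_F=\Id$). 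Thus $H_F$ exhibits $C_\bullet(G;F)$ as deformation-retracting onto $F_G$, which is the asserted contracting homotopy; it depends on the chosen basis (harmlessly), and one may arrange in addition $H_F^2=0$, $H_F\sigma_F=0$ and $\pi_F H_F=0$.

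For the second statement, view $C_\bullet(G;F)$ as the product totalisation of the double complex $D_{p,q}=F_p\otimes_{kG}C^{Hoch}_q(G)$, with differential $\delta_{(G,F)}=d_h+d_v$, where $d_h=\partial\otimes\Id$ and $d_v=\pm(\Id\otimes b)$ (Koszul signs). The triples $(\sigma_{F_p},\pi_{F_p},H_{F_p})$ of the first part are strong deformation retraction data of the rows $(D_{p,\bullet},d_v)$ onto $(F_p)_G$, and assemble over $p$ into such data for $\left(\bigoplus_{p,q}D_{p,q},d_v\right)$ onto $\left((F_\bullet)_G,0\right)$, with homotopy $\mathbf H=\bigoplus_p H_{F_p}$, projection $\mathbf p=\Id\otimes\epsilon$ and inclusion $\mathbf i=\bigoplus_p\sigma_{F_p}$. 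Applying the homological perturbation lemma with perturbation $d_h$ produces deformation retraction data of $\left(C_\bullet(G;F),\delta_{(G,F)}\right)$ onto $\left((F_\bullet)_G,D'\right)$; convergence of the relevant series is automatic because $\mathbf H d_h$ raises the bar-degree $q$ by one, so $\mathbf i':=(\Id-\mathbf H d_h)^{-1}\mathbf i=\sum_{k\ge 0}(\mathbf H d_h)^k\mathbf i$ has, in each total degree, at most one contribution in each of the factors of the product $C_\bullet(G;F)$ and is therefore a well-defined chain map $(F_\bullet)_G\to C_\bullet(G;F)$, while a short computation — using $kG$-linearity of $\partial$ and the fact that $\mathbf p$ annihilates bar-degrees $>0$ — shows $D'=\bar\partial$, the standard differential on the coinvariants. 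The map $\mathbf i'$ is then the desired quasi-isomorphism $\left((F_\bullet)_G,\partial\right)\iso\left(C_\bullet(G;F),\delta_{(G,F)}\right)$, manifestly built from the $H_{F_n}$. Equivalently, one may filter $C_\bullet(G;F)$ by the $F$-degree $p$: by the first part each associated-graded column $F_p\otimes_{kG}C^{Hoch}_\bullet(G)$ is a resolution of $(F_p)_G$, so $E_1$ is concentrated in bar-degree $0$ and equals $(F_\bullet)_G$, the spectral sequence collapses, and — the filtration being exhaustive, Hausdorff and complete in each total degree — it converges, showing $\mathbf p=\Id\otimes\epsilon$ to be a quasi-isomorphism, which over the field $k$ then admits a quasi-inverse.

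The main obstacle, and really the only subtle point, is that the $\sigma_{F_p}$ and $H_{F_p}$ are built from \emph{independent} bases of the modules $F_p$, so they do not commute with $\partial\colon F_p\to F_{p-1}$; hence neither $\mathbf i=\bigoplus\sigma_{F_p}$ nor $\mathbf H=\bigoplus H_{F_p}$ is by itself a chain map or a chain homotopy on the total complex, and the perturbation series (equivalently, the spectral sequence) is precisely the device that repairs this. A secondary point to bear in mind is that in the intended applications $F_\bullet$ is unbounded — it is a periodic cyclic complex — so every totalisation must be taken as a product; with that convention the perturbation series is well-defined term by term and no separate convergence estimate is required.
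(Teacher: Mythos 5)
Your proposal is correct and follows essentially the same route as the paper: the contraction is the standard bar-resolution homotopy inserting the identity element (transported through the freeness of $F$), and the paper's explicit map $Q_F$ is precisely your perturbation series $\sum_{k\ge 0}(\mathbf{H}d_h)^k\mathbf{i}$ written out term by term, with the quasi-isomorphism property established by the same filtration-by-$F$-degree spectral sequence of the (product-totalized) double complex. Your remarks on the product totalization and on completeness of the filtration only make explicit what the paper treats briefly.
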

 \begin{proof}

  \leavevmode

  \noindent Note that $F\simeq M\otimes kG$ since it is a free module. So we find that
  \begin{equation*}C_p(G;F)=(M\otimes kG)\otimes_{kG}(kG)^{\otimes p+1}\simeq M\otimes(kG)^{\otimes p+1}\end{equation*} by the map
  $m\otimes g\otimes g_0\otimes \ldots\otimes g_p\mapsto m\otimes gg_0\otimes \ldots \otimes gg_p.$ Using this normalization we consider the map
  $H_M$ given by
  \begin{equation*}m\otimes g_0\otimes\ldots\otimes g_p\mapsto m\otimes e\otimes g_0\otimes\ldots \otimes g_p\end{equation*} and note that indeed
  \begin{equation*}\delta_G^{p+1}H_M+H_M\delta_G^{p}=\Id\end{equation*} (we denote $\delta_G:=\delta_{(G,M)}=\Id\otimes b$) for all $p>0$.

  Now for the second statement we find that $F_n\simeq M_n\otimes kG$ for each $n$ since
  it is quasi-free. For each $n$ we have the homotopy $H_n:=H_{F_n}$ given by the formula above on $C_\bullet(G;F_n)$.
  Then we consider the map
 \begin{equation*}Q_H\colon (F_p)_G\longrightarrow C_p(G;F)\end{equation*} given by
 \begin{equation*}Q_F([f])=f-\delta^1_GHf+\sum_{q=1}^\infty(-H\partial)^qf-\partial(-H\partial)^{q-1}Hf-\delta_G^{q+1}(-H\partial)^qHf\end{equation*}
 where we have dropped the subscript from $H$ and we denote the class of $f$ in the co-invariants $F_G$ by $[f]$ . One may check
 by straightforward computation that $Q_F$ is a well-defined morphism of complexes. Now we note that the double complex defining $C_\bullet(G;F)$ is concentrated in the
 upper half plane and therefore comes with a spectral sequence with first page given by $H_p(G;F_q)$ which converges to $\CH(C_{p+q}(G;F))$ (group homology). Note
 however that since $F_\bullet$ is quasi-free we find that $H_p(G,F_q)=0$ unless $p=0$ and $H_0(G,F_q)=(F_q)_G$. Thus, since $Q_F$ induces an isomorphism on the first
 page and the spectral sequence converges, we find that $Q_F$ is a quasi-isomorphism.
 \end{proof}

As a $kG$-module we see that $A\natural G([n])=A^\natural([n])\otimes G^{k\natural}([n])=B([n])\otimes kG$ with the diagonal action, where
$B([n])=A^{\otimes n+1}\otimes kG^{\otimes n}$. So by proposition \ref{above2} we find that the Hochschild and various cyclic chain complexes corresponding
to $A\natural G$ are quasi-free. Thus we can
construct the quasi-isomorphisms from proposition \ref{above1} for each chain complex associated to the cyclic module $A\natural G$. So we find four quasi-isomorphisms
which we shall denote $Q^{Hoch}$, $Q$, $Q^-$ and $Q^{per}$ corresponding to the Hochschild, cyclic, negative cyclic and periodic cyclic complexes respectively.

\begin{proposition}\label{above4}
 The map \begin{equation*}A\natural G\longrightarrow A^\natural \end{equation*} given by
 \begin{equation*}(a_0\otimes \ldots\otimes a_n)\natural(g_0\otimes \ldots\otimes g_n)\mapsto a_0\otimes \ldots\otimes a_n\end{equation*}
 induces a quasi-isomorphism on all associated complexes.
\end{proposition}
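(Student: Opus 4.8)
The plan is to reduce the statement to Hochschild homology and then recognise the forgetful map as the linearised augmentation of the universal contractible simplicial set $EG$, sitting in one of the two tensor factors of the diagonal cyclic module $A\natural G=A^\natural\natural G^{k\natural}$.

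First I would observe that forgetting the group coordinates commutes with every face, degeneracy and cyclic operator of $A\natural G$ (one checks this directly; the only point to watch is $\delta_n$, where the multiplication $a_na_0$ on the $A^\natural$-factor matches the deletion of $g_n$ on the $G^{k\natural}$-factor), so the map of Proposition~\ref{above4} is a morphism of cyclic $k$-modules; by Proposition~\ref{Hochsuf} it then suffices to prove it induces an isomorphism on Hochschild homology, the cyclic, negative cyclic and periodic cyclic cases following automatically. To exploit this, introduce the augmentation $\epsilon\colon G^{k\natural}\to\underline k$ into the constant cyclic module, sending every basis tensor $g_0\otimes\dots\otimes g_n$ to $1$; this is a well-defined morphism of cyclic modules, since deleting, repeating or cyclically permuting entries leaves the value unchanged and every structure map of $\underline k$ is the identity. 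Under the identifications $A\natural G=A^\natural\natural G^{k\natural}$ and $A^\natural\natural\underline k=A^\natural$, the map of Proposition~\ref{above4} is exactly $\id_{A^\natural}\natural\,\epsilon$, i.e.\ the diagonal of the bisimplicial morphism $\id\otimes\epsilon$ between the external products $A^\natural\boxtimes G^{k\natural}$ and $A^\natural\boxtimes\underline k$.

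The key input is that $G^{k\natural}$, as a simplicial $k$-module, is the linearisation $k[EG]$ of the contractible simplicial set $EG$ with $EG_n=G^{n+1}$: it carries the extra degeneracy $g_0\otimes\dots\otimes g_n\mapsto e\otimes g_0\otimes\dots\otimes g_n$, so the augmented complex $C^{Hoch}_\bullet(G^{k\natural})\xrightarrow{\ \epsilon\ }k$ is contractible and $\epsilon$ is a chain homotopy equivalence. Now invoke the Eilenberg--Zilber theorem (see e.g.\ \cite{Loday}), which is natural in both simplicial variables: $C^{Hoch}_\bullet(A\natural G)$, being the chain complex of the diagonal of $A^\natural\boxtimes G^{k\natural}$, is chain homotopy equivalent via the shuffle and Alexander--Whitney maps to $\Tot\bigl(C^{Hoch}_\bullet(A^\natural)\otimes_k C^{Hoch}_\bullet(G^{k\natural})\bigr)$, and under this equivalence our map goes over to $\id\otimes C^{Hoch}_\bullet(\epsilon)$. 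Since $k$ is a field, tensoring the chain homotopy equivalence $C^{Hoch}_\bullet(\epsilon)$ with $C^{Hoch}_\bullet(A^\natural)$ is again a chain homotopy equivalence onto $C^{Hoch}_\bullet(A^\natural)\otimes_k k=C^{Hoch}_\bullet(A^\natural)$. Hence the map of Proposition~\ref{above4} is a quasi-isomorphism on Hochschild complexes, and by Proposition~\ref{Hochsuf} on the cyclic, negative cyclic and periodic cyclic complexes as well.

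I expect the only delicate point to be the identification, after passing through Eilenberg--Zilber, of the forgetful map with $\id\otimes C^{Hoch}_\bullet(\epsilon)$; this is a naturality statement and must be argued as such rather than by matching explicit formulas. Indeed, the tempting shortcut — writing a one-line contracting homotopy for $\id-s\pi$, with $s\colon A^\natural\to A\natural G$ the obvious cyclic section $a_0\otimes\dots\otimes a_n\mapsto(a_0\otimes\dots\otimes a_n)\natural(e\otimes\dots\otimes e)$, using only the extra degeneracy of $G^{k\natural}$ — does \emph{not} work, because the last face map $\delta_n$ of $A^\natural$ wraps the first tensor slot around and fails to commute with the inserted unit; the Eilenberg--Zilber shuffle homotopies (or a hand-made substitute for them) are genuinely needed. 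Everything else in the argument is routine.
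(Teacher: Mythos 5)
Your proof is correct and follows essentially the same route as the paper's: reduce to the Hochschild complexes via Proposition \ref{Hochsuf}, then use the Alexander--Whitney/Eilenberg--Zilber equivalence $C^{Hoch}_\bullet(A\natural G)\simeq C^{Hoch}_\bullet(A^\natural)\otimes C^{Hoch}_\bullet(G^{k\natural})$ together with the contractibility of the homogeneous bar complex $C^{Hoch}_\bullet(G^{k\natural})=F(G)$. The paper phrases the final step as capping with the generator of $H^*(F(G)^*)\simeq k$, which is exactly your augmentation $\epsilon$ (your ``$\otimes_k k$'' should strictly be $\otimes_k C^{Hoch}_\bullet(\underline{k})$, but this does not affect the argument).
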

\begin{proof}

 \leavevmode

 \noindent Note that, by proposition \ref{Hochsuf}, it is sufficient to prove the statement for the Hochschild complexes.
 Let us denote the standard free resolution of $G$ by $F(G)$, note that \begin{equation*}F(G)=(C^{Hoch}_\bullet(G^{k\natural}),b).\end{equation*}
 The map given above is obtained by first applying the Alexander--Whitney map
 \begin{equation*}C^{Hoch}_n(A^\natural)\otimes C^{Hoch}_n(G^{k\natural})\longrightarrow \bigoplus_{p+q=n}C^{Hoch}_p(A^\natural)\otimes C^{Hoch}_q(G^{k\natural}),\end{equation*}
 which yields a quasi-isomorphism
 \begin{equation*}C^{Hoch}_\bullet(A\natural G)\stackrel{\sim}{\longrightarrow}C^{Hoch}_\bullet(A^\natural)\otimes C^{Hoch}_\bullet(G^{k\natural}),\end{equation*}
 where we consider the tensor product of chain complexes on the right-hand side. Then one simply takes the cap product
 with the generator in $H^*(F(G)^*)\simeq k$, which is also a quasi-isomorphism.  So we find that the map is a quasi-isomorphism for the Hochschild complexes.
\end{proof}

Note that the map given in proposition \ref{above4} is also $G$-equivariant and therefore it induces a map
\begin{equation*}C_\bullet(G;A\natural G)\longrightarrow C_\bullet(G;A^\natural)\end{equation*} which is a quasi-isomorphism when we consider the group homology complex with values in the various
complexes associated to $A^\natural$.

\begin{theorem}\label{thm:A13}
 The composite maps from the Hochschild and various cyclic complexes associated to $(A\rtimes\Gamma)^\natural_e$ to the
 group homology with values in the various Hochschild and cyclic complexes associated to $A^\natural$
 implied by propositions \ref{above3} and \ref{above4}
 are quasi-isomorphisms, i.e. there are quasi-isomorphisms
 \begin{equation*}\left(C^{Hoch}_\bullet\left((A\rtimes G)^\natural_e\right),b\right)\stackrel{\sim}{\longrightarrow}
 C_\bullet(G;C^{Hoch}_\bullet(A))\end{equation*}
 \begin{equation*}\left(CC_\bullet\left((A\rtimes G)^\natural_e\right),\delta^{\natural}\right)\stackrel{\sim}{\longrightarrow}
 C_\bullet(G;CC_\bullet(A))\end{equation*}
 \begin{equation*}\left(CC^{-}_\bullet\left((A\rtimes G)^\natural_e\right),\delta_{-}^{\natural}\right)\stackrel{\sim}{\longrightarrow}
 C_\bullet(G;CC^{-}_\bullet(A))\end{equation*}
 and
 \begin{equation*}\left(CC^{per}_\bullet\left((A\rtimes G)^\natural_e\right),\delta_{per}^{\natural}\right)\stackrel{\sim}{\longrightarrow}
 C_\bullet(G;CC^{per}_\bullet(A)).\end{equation*}
\end{theorem}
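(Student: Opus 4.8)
The plan is to read each of the four quasi-isomorphisms in the statement as a composite of three maps, all of which have already been produced in this appendix; what remains is to check that every factor works simultaneously on the Hochschild, cyclic, negative cyclic and periodic cyclic complexes. Fixing one of these four flavours and equipping every object below with the corresponding differential ($b$, or $b+uB$ with the appropriate range of the formal variable $u$), the composite I have in mind is
\[
(A\rtimes G)^\natural_e\ \stackrel{\sim}{\longrightarrow}\ (A\natural G)_G\ \stackrel{\sim}{\longrightarrow}\ C_\bullet(G; A\natural G)\ \stackrel{\sim}{\longrightarrow}\ C_\bullet(G; A^\natural).
\]

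The first arrow is Proposition \ref{above3}: the map constructed there is an isomorphism of cyclic $k$-modules, hence induces isomorphisms on the Hochschild complex and on all three cyclic complexes. For the second arrow, recall that in every simplicial degree $A\natural G([n]) = B([n])\otimes kG$ is a free $kG$-module by Proposition \ref{above2}, so the Hochschild complex and each cyclic complex of $A\natural G$ is quasi-free; Proposition \ref{above1} then provides the quasi-isomorphisms $Q^{Hoch}$, $Q$, $Q^{-}$, $Q^{per}$ from the coinvariants $(A\natural G)_G$ to the group homology $C_\bullet(G; A\natural G)$. This middle step carries the real content --- the explicit contracting homotopies and the upper-half-plane spectral sequence of Proposition \ref{above1}. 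The third arrow comes from Proposition \ref{above4}: the forgetful map $A\natural G\to A^\natural$ is $G$-equivariant and a quasi-isomorphism on all four complexes, so applying $C_\bullet(G;-)$ --- the product totalization of $(-)\otimes_{kG}C^{Hoch}_\bullet(G^{k\natural})$ against the standard free resolution of $k$ over $kG$ --- gives the quasi-isomorphisms $C_\bullet(G; A\natural G)\to C_\bullet(G; A^\natural)$. Composing the three arrows yields the theorem.

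The only point requiring genuine care is the last one in the negative and periodic cyclic cases, where the complex of $A^\natural$ is unbounded: one must confirm that $C_\bullet(G;-)$ with the product totalization still sends a degreewise quasi-isomorphism of $kG$-chain complexes to a quasi-isomorphism. This holds because the group-homological grading is bounded below and $k$ is a field, so infinite products are exact and the spectral sequence converges as in Remark \ref{HHtoCH}; alternatively, one can deduce the negative and periodic statements from the Hochschild statement by Proposition \ref{Hochsuf}. Apart from this, I expect no obstacle beyond keeping the four flavours in parallel.
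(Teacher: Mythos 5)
Your proposal is correct and follows essentially the same route as the paper: Theorem \ref{thm:A13} is obtained there precisely as the composite of the isomorphism of Proposition \ref{above3}, the quasi-isomorphisms $Q^{Hoch}, Q, Q^-, Q^{per}$ furnished by Propositions \ref{above2} and \ref{above1} for the quasi-free complexes of $A\natural G$, and the map on group homology induced by the $G$-equivariant quasi-isomorphism of Proposition \ref{above4}. Your additional care about the product totalization in the negative and periodic cases addresses a convergence point the paper passes over silently, and your bounded-below/complete-filtration argument for it is sound.
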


\begin{remark}
 Note that since the cyclic and Hochschild complexes are bounded below the product totalizations in our definition of group homology  agrees with the (usual) direct sum
 totalizations. In the periodic cyclic and negative cyclic cases they do not agree in general.
\end{remark}
\begin{remark}
Suppose that a discrete group $\Gamma$ acts on a smooth manifold $M$ by  diffeomorphisms. The above produces a morphism of complexes
$$
CC_\bullet^{per} (C^\infty(M)_c\rtimes \Gamma)\rightarrow C_\bullet (\Gamma,CC_\bullet^{per} (C^\infty_c(M))
$$
Composing it with the   morphism
$$
CC_\bullet^{per} (C^\infty_c(M))\longrightarrow \Omega^\bullet_c(M)[u^{-1},u\rrbracket,
$$
induced by the map
$$
f_0\otimes f_1\otimes\ldots\otimes f_n\mapsto \frac{1}{n!} f_0df_1\ldots df_n
$$
we get a morphism of complexes
\begin{equation}
\label{eq:cyclic homology of crossed product}
CC_\bullet^{per} (C^\infty(M)_c\rtimes \Gamma)\rightarrow C_\bullet (\Gamma,\Omega^\bullet_c(M)[u^{-1},u\rrbracket).
\end{equation}
In the case when $M$ is oriented and the elements of $\Gamma$ preserve orientation, the transpose of this map can be interpreted as a morphism of complexes
\begin{equation}
\label{eq:Phi}
\Phi\colon C^\bullet (\Gamma,\Omega^{\dim(M)-\bullet}(M)[u^{-1},u\rrbracket)\longrightarrow CC^\bullet_{per}(C^\infty_c(M)\rtimes \Gamma),
\end{equation}
compare \cite{NCG} section 3.2.$\delta$.
\end{remark}
\subsubsection{Group Homology}

It is often useful to consider instead of the above complex for group homology an \emph{isomorphic} complex, which we will call the \emph{non-homogeneous complex}.

\begin{definition}
 Suppose $(M_\sbullet,\partial)$ is a right $kG$-chain complex, then we set
 \begin{equation*}\tilde{C}_n(G;M):=\prod_{p+q=n}M_q\otimes (kG)^{\otimes p}.\end{equation*} We define the operators
$\delta_i^p\colon M_\sbullet\otimes (kG)^{\otimes p}\rightarrow M_\sbullet\otimes (kG)^{\otimes p-1}$ by
\begin{equation*}\delta_0^p(m\otimes g_1\otimes\ldots\otimes g_p):=g_1(m)\otimes g_2\otimes\ldots\otimes g_p\end{equation*}
\begin{equation*}\delta_i^p(m\otimes g_1\otimes\ldots\otimes g_p):=m\otimes g_1\otimes\ldots\otimes g_ig_{i+1}\otimes \ldots\otimes g_p\end{equation*}
for all $0<i<p$ and finally
\begin{equation*}\delta_p^p(m\otimes g_1\otimes\ldots\otimes g_p):=m\otimes g_1\otimes\ldots\otimes g_{p-1}.\end{equation*} We define
$(\tilde{C}_\sbullet(G;M), \tilde{\delta}_{(G,M)})$ to be the chain complex given by
\begin{equation*}\tilde{\delta}_{(G,M)}=\partial\otimes\Id+\Id\otimes \delta_G\end{equation*}
where $\delta^p_G=\sum_{i=0}^p\delta_i^p$.
\end{definition}

\begin{proposition}
 There is an isomorphism of chain complexes
 \begin{equation*}C_\sbullet(G;M)\longrightarrow \tilde{C}_\sbullet(G;M).\end{equation*}
\end{proposition}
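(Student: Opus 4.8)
The plan is to exhibit the classical ``homogeneous to non-homogeneous'' change of variables, fibered over the chain complex $M_\sbullet$, and to check by direct inspection that it is an isomorphism of chain complexes. The starting point is the elementary fact that, for each fixed $q$, the bar chains $C^{Hoch}_q(G)=(kG)^{\otimes q+1}$ form a \emph{free} left $kG$-module (for the diagonal action by left translation), an explicit isomorphism with $kG\otimes(kG)^{\otimes q}$ being
\begin{equation*}
g_0\otimes g_1\otimes\ldots\otimes g_q\longmapsto g_0\otimes\bigl(g_0^{-1}g_1\otimes g_1^{-1}g_2\otimes\ldots\otimes g_{q-1}^{-1}g_q\bigr),
\end{equation*}
with inverse $h_0\otimes(h_1\otimes\ldots\otimes h_q)\mapsto h_0\otimes h_0h_1\otimes h_0h_1h_2\otimes\ldots\otimes h_0h_1\cdots h_q$. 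Tensoring over $kG$ with a right $kG$-module $M_p$ therefore yields a natural isomorphism of $k$-modules $M_p\otimes_{kG}(kG)^{\otimes q+1}\iso M_p\otimes(kG)^{\otimes q}$, namely $m\otimes(g_0\otimes\ldots\otimes g_q)\mapsto mg_0\otimes(g_0^{-1}g_1\otimes\ldots\otimes g_{q-1}^{-1}g_q)$. Taking the direct product of these isomorphisms over all decompositions $p+q=n$ produces a degree-preserving isomorphism of graded $k$-modules $C_\sbullet(G;M)\iso\tilde C_\sbullet(G;M)$.

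It then remains to verify that this isomorphism intertwines the two differentials. The ``internal'' summand $\partial\otimes\id$ of $\delta_{(G,M)}$ is visibly carried to the corresponding summand of $\tilde\delta_{(G,M)}$, since the change of variables affects the $M_p$-coordinate only by right multiplication, which commutes with $\partial$. For the ``group'' summand one computes, in the new coordinates $h_i=g_{i-1}^{-1}g_i$, the effect of each face operator $\delta_i$ of $C^{Hoch}_\sbullet(G)$ (deletion of $g_i$): deleting $g_0$ replaces the bookkeeping factor $mg_0$ by $mg_1=(mg_0)h_1$ and shifts the remaining $h_i$'s down by one, which is exactly the face $\delta^p_0(m\otimes h_1\otimes\ldots)=h_1(m)\otimes h_2\otimes\ldots$; deleting an interior $g_i$ merges $h_i$ and $h_{i+1}$ into $h_ih_{i+1}$, which is the face $\delta^p_i$; and deleting $g_q$ simply drops $h_q$, which is $\delta^p_p$. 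Matching the alternating signs, $\id\otimes b$ is carried to $\id\otimes\delta_G$, so the map is an isomorphism of complexes, as claimed.

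The only genuine subtlety — and the one point worth spelling out carefully — is bookkeeping with the $kG$-module conventions: one must make sure that the left $kG$-structure implicitly used on $C^{Hoch}_\sbullet(G)$ in forming $\otimes_{kG}$, the inversion built into the right $G$-action on $G^{k\natural}$, and the meaning of ``$g_1(m)$'' in $\delta^p_0$ are all mutually consistent, so that the faces line up on the nose rather than up to an inversion or an extra sign. Once the conventions are pinned down, the whole verification is a finite, routine computation; the argument is in any case the classical comparison of the homogeneous and inhomogeneous bar complexes, now carried out with coefficients in a chain complex $M_\sbullet$ rather than a single module.
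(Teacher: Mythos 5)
Your proposal is correct and follows essentially the same route as the paper: the isomorphism you construct, $m\otimes g_0\otimes\ldots\otimes g_p\mapsto mg_0\otimes g_0^{-1}g_1\otimes\ldots\otimes g_{p-1}^{-1}g_p$ with inverse $m\otimes g_1\otimes\ldots\otimes g_p\mapsto m\otimes e\otimes g_1\otimes g_1g_2\otimes\ldots\otimes g_1\cdots g_p$, is exactly the map used there (the paper's $g_0(m)$ denotes the right action $m\cdot g_0$). Your packaging via freeness of the bar chains and $-\otimes_{kG}$, and the explicit face-by-face check, are just a more detailed write-up of the paper's ``it commutes with the differentials and allows the inverse'' remark.
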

\begin{proof}

 \leavevmode

 \noindent Consider the map
\begin{equation*}C_n(G;M)\longrightarrow \tilde{C}_n(G;M),\end{equation*}
given by \begin{equation*}m\otimes g_0\otimes\ldots\otimes g_p\mapsto g_0(m)\otimes g_0^{-1}g_1\otimes g_1^{-1}g_2\otimes\ldots\otimes
g^{-1}_{p-1}g_p.\end{equation*}
Note that it commutes with the differentials and allows for the inverse given by
\begin{equation*}m\otimes g_1\otimes\ldots\otimes g_p\mapsto m\otimes e\otimes g_1\otimes g_1g_2\otimes\ldots\otimes g_1\cdot\ldots\cdot g_p.\end{equation*}
\end{proof}

We will usually use this chain complex when dealing with group homology and thus we will drop the tilde in the main body of this article.

\subsection{Lie algebra cohomology}

In this section let us describe the Lie algebra cohomology. Although there are various deep relations between Lie algebra cohomology and cyclic and Hochschild homologies
we have chosen to present the complex in a separate manner. One could compute the Lie algebra cohomology using a Hochschild complex, however in the relative case
(which we need in this article) there are several subtleties that we would rather avoid by considering a different complex.

\begin{definition}
Suppose $\g$ is a Lie algebra over $k$, $\mathfrak{h}\hookrightarrow \mathfrak{g}$ a subalgebra and $(M_\sbullet,\partial)$ is a $\g$-chain complex. Then we denote
\begin{equation*}C^p_{Lie}(\g,\mathfrak{h};M_q):=\Hom_{\mathfrak{h}}\left(\largewedge{p}\g/\mathfrak{h},M_q\right).\end{equation*}
We define operators \begin{equation*}\partial_{Lie}^p\colon C^p_{Lie}(\g,\mathfrak{h};M_q)\longrightarrow C^{p+1}_{Lie}(\g,\mathfrak{h};M_q)\end{equation*}
by
\begin{equation*}\partial_{Lie}^p\phi(X_0,\ldots,X_p)=\sum_{i=0}^p(-1)^iX_i\phi(X_0,\ldots,\hat{X_i},\ldots,X_p)
 \end{equation*}\begin{equation*}+\sum_{0\leq i<j\leq p}(-1)^{i+j}\phi([X_i,X_j],X_0,\ldots, \hat{X_i},\ldots, \hat{X_j},\ldots, X_p)
\end{equation*}
where the hats signify omission. Note that $\partial_{Lie}^{p+1}\partial_{Lie}^p=0$ and $\partial_{Lie}$ commutes with $\partial$ by assumption. Thus we can consider
the totalization of the corresponding double complex. We will denote the corresponding hypercohomology by $\mathbb{H}_{Lie}^\sbullet(\g,\mathfrak{h};M)$.

\end{definition}

\begin{remark}
 We actually only consider the Lie algebra cohomology of infinite dimensional Lie algebras here. For these the complex above is not very useful. The Lie algebras we
 consider come with a topology (induced by filtration) however and so do the coefficients. Using this fact we can consider in the above not simply anti-symmetric linear
 maps, but continuous anti-symmetric linear maps from the completed tensor products.
\end{remark}

\end{document}